\documentclass[a4paper]{amsart}
\usepackage[utf8]{inputenc}
\usepackage[british]{babel}
\usepackage{csquotes}
\usepackage[useregional]{datetime2}
\usepackage[shortcuts]{extdash}
\usepackage[style=alphabetic, maxbibnames=10, maxalphanames=4]{biblatex}

\usepackage{amsmath}
\usepackage{amsfonts}
\usepackage{amsthm}
\usepackage{amssymb}
\usepackage{mathrsfs}
\usepackage{mathtools}

\usepackage[shortlabels]{enumitem}
\setlist[enumerate]{label=(\alph*)}

\usepackage[ruled,noline,linesnumbered]{algorithm2e}
\SetKwInOut{Input}{Input}
\SetKwInOut{Output}{Output}
\DontPrintSemicolon
\SetAlgoCaptionSeparator{.}
\counterwithin{algocf}{section}

\usepackage{breakurl}
\usepackage[hidelinks, pdfauthor={Johannes Schmitt}]{hyperref}
\usepackage[group-separator={,}]{siunitx}

\setcounter{tocdepth}{1}

\theoremstyle{plain}
\newtheorem{theorem}{Theorem}[section]
\newtheorem{lemma}[theorem]{Lemma}
\newtheorem{proposition}[theorem]{Proposition}

\theoremstyle{definition}
\newtheorem{definition}[theorem]{Definition}

\theoremstyle{remark}
\newtheorem{remark}[theorem]{Remark}
\newtheorem{notation}[theorem]{Notation}
\newtheorem{example}[theorem]{Example}

\makeatletter
\patchcmd{\@setauthors}{\footnotesize}{}{}{}
\patchcmd{\@setauthors}{\MakeUppercase}{}{}{}
\patchcmd{\@setaddresses}{\scshape}{}{}{}
\patchcmd{\contentsnamefont}{\scshape}{\bfseries}{}{}
\patchcmd{\abstract}{\scshape}{\bfseries}{}{}
\patchcmd{\section}{\scshape}{\bfseries}{}{}
\patchcmd{\@secnumfont}{\mdseries}{\bfseries}{}{}
\patchcmd{\@captionheadfont}{\scshape}{\bfseries}{}{}
\makeatother

\hyphenation{a-bel-ian}
\hyphenation{A-bel-ian}
\hyphenation{al-ge-bra-ic}
\hyphenation{al-ter-na-tive}
\hyphenation{anal-o-gous}
\hyphenation{anal-o-gous-ly}
\hyphenation{ap-pli-ca-tion}
\hyphenation{ap-pli-ca-tions}
\hyphenation{ar-bi-trary}

\hyphenation{Beau-ville}

\hyphenation{clas-si-fi-ca-tion}
\hyphenation{com-mu-ta-tor}
\hyphenation{com-mu-ta-tors}
\hyphenation{con-ju-gate}
\hyphenation{cor-respon-dence}

\hyphenation{def-i-ni-tion}
\hyphenation{def-i-ni-tions}

\hyphenation{de-ter-mi-nant}

\hyphenation{equiv-a-lent}
\hyphenation{equiv-a-lent-ly}

\hyphenation{fac-to-ri-al}

\hyphenation{gen-er-a-tor}
\hyphenation{gen-er-a-tors}

\hyphenation{ho-mo-ge-ne-ous}

\hyphenation{im-prim-i-tiv-i-ty}
\hyphenation{im-prim-i-tiv-i-ties}
\hyphenation{in-for-ma-tion}
\hyphenation{ir-re-ducibil-i-ty}

\hyphenation{Kai-sers-lau-tern}
\hyphenation{Kho-vans-kii}

\hyphenation{math-e-mat-ics}
\hyphenation{math-e-mat-i-cal}
\hyphenation{ma-trix}
\hyphenation{ma-tri-ces}
\hyphenation{mor-phism}
\hyphenation{mor-phisms}
\hyphenation{Man-u-scrip-ta}

\hyphenation{neg-a-tive}

\hyphenation{pre-sen-ta-tion}
\hyphenation{pre-sen-ta-tions}
\hyphenation{prim-i-tiv-i-ty}
\hyphenation{prim-i-tiv-i-ties}
\hyphenation{propo-si-tion}
\hyphenation{propo-si-tions}

\hyphenation{quater-nion}
\hyphenation{quater-nions}
\hyphenation{quater-nion-ic}

\hyphenation{rea-son}
\hyphenation{rea-sons}
\hyphenation{re-ducibil-i-ty}
\hyphenation{rep-re-sen-ta-tion}
\hyphenation{rep-re-sen-ta-tions}
\hyphenation{rep-re-sen-ta-tive}
\hyphenation{rep-re-sen-ta-tives}
\hyphenation{res-olu-tion}
\hyphenation{res-olu-tions}

\hyphenation{Shep-hard}
\hyphenation{sin-gular-i-ty}
\hyphenation{sin-gular-i-ties}
\hyphenation{Sprin-ger}
\hyphenation{sym-plec-ti-cal-ly}

\hyphenation{ter-mi-nal-iza-tion}
\hyphenation{ter-mi-nal-iza-tions}

\hyphenation{vari-ety}
\hyphenation{vari-eties}
\hyphenation{Ve-ro-ne-se}

\hyphenation{Ya-ma-gi-shi}

\DeclareMathOperator{\Ab}{Ab}
\DeclareMathOperator{\Cl}{Cl}

\DeclareMathOperator{\GL}{GL}
\DeclareMathOperator{\gr}{gr}
\DeclareMathOperator{\Hom}{Hom}
\DeclareMathOperator{\im}{im}
\DeclareMathOperator{\initial}{in}
\DeclareMathOperator{\LM}{LM}
\DeclareMathOperator{\LT}{LT}

\DeclareMathOperator{\mindeg}{mindeg}
\DeclareMathOperator{\Spec}{Spec}
\DeclareMathOperator{\Supp}{Supp}
\DeclareMathOperator{\SL}{SL}

\newcommand{\CC}{\mathbb C}
\newcommand{\QQ}{\mathbb Q}
\newcommand{\ZZ}{\mathbb Z}

\renewcommand{\epsilon}{\varepsilon}
\renewcommand{\phi}{\varphi}
\renewcommand{\theta}{\vartheta}
\newcommand{\ideal}{\trianglelefteq}
\renewcommand{\hom}{\mathrm{hom}}
\newcommand{\deh}{\mathrm{deh}}

\addbibresource{homogeneous_khovanskii.bib}

\title{Homogeneous Khovanskii bases and MUVAK bases}

\author{Johannes Schmitt}
\address{Johannes Schmitt, Ruhr\-/Universität Bochum, Fakultät für Mathematik, Universitätsstraße 150, 44801 Bochum, Germany}
\email{johannes.schmitt@ruhr-uni-bochum.de}
\date{\DTMdisplaydate{2025}{12}{02}{-1}}
\subjclass{Primary: 13P10 Gröbner bases; other bases for ideals and modules; Secondary: 16W50 Graded Rings and modules, 14E30 Minimal model program}
\keywords{Khovanskii bases, SAGBI bases, Cox rings}

\begin{document}

\begin{abstract}
  In 2019, Kaveh and Manon introduced Khovanskii bases as a special `Gröbner\-/like' generating system of an algebra.
  We extend their work by considering an arbitrary grading on the algebra and propose a definition for a `homogeneous Khovanskii basis' that respects this grading.
  We generalize Khovanskii bases further by taking multiple valuations into account (MUVAK bases).
  We give algorithms in both cases.

  MUVAK bases appear in the computation of the Cox ring of a minimal model of a quotient singularity.
  Our algorithm is an improvement of an algorithm by Yamagishi in this situation.
\end{abstract}

\maketitle
\tableofcontents

\section{Introduction}

\emph{SAGBI bases} -- the subalgebra analogue to Gröbner bases for ideals -- were introduced by Robbiano and Sweedler in \cite{RS90}.
One is given a subalgebra \(A\leq K[X_1,\dots,X_n]\) of a polynomial ring over a field \(K\) and a monomial ordering \(>\).
The algorithmic task is to find generators \(f_1,\dots, f_k\) of \(A\) with \[K[\LM_>(f_i)\mid 1\leq i\leq k] = K[\LM_>(f)\mid f\in A],\] where \(\LM_>(f)\) denotes the leading monomial of a polynomial \(f\) with respect to \(>\).
SAGBI bases are also called \emph{canonical bases} in \cite{Stu96}.
Kaveh and Manon \cite{KM19} generalized this concept by far with \emph{Khovanskii bases}.
In this setting, \(A\) is a finitely generated \(K\)\=/algebra without zero\-/divisors and \(v:A\setminus\{0\}\to \QQ^r\) is a discrete valuation on \(A\) for some \(r \in\ZZ_{>0}\).
The valuation \(v\) induces a filtration on \(A\) and one considers the associated graded algebra \(\gr_v(A)\), see below for details.
One now wants to find generators of \(A\) whose images in \(\gr_v(A)\) generate \(\gr_v(A)\) as a \(K\)\=/algebra.

In this article, we extend the work of Kaveh and Manon and consider a grading on \(A\) by an arbitrary finitely generated abelian group \(\Delta\).
We give a definition of a \emph{\(\Delta\)\=/homogeneous Khovanskii basis}, which -- as the name suggests -- should be a \(\Delta\)\=/homogeneous generating system of \(A\) that fulfils a `Khovanskii\-/like' property, see section~\ref{sec:homkhov} for the technical details.
We further bridge the gap between SAGBI bases and Khovanskii bases by allowing an ambient ring \(B\) of which \(A\) is a subalgebra.
In case of SAGBI bases, \(B\) would be a polynomial ring; for a Khovanskii basis as in \cite{KM19}, we have \(A = B\).

We further introduce a `Multiple Valuations Analogue to Khovanskii bases' (\emph{MUVAK bases}), where a Khovanskii\-/like property is required to be fulfilled for several valuations simultaneously.
Our construction is motivated by the computation of certain Cox rings.
In the past decade, there has been quite some interest in Cox rings of minimal models of quotient singularities \cite{Don16, DW17, Yam18, Gra19, Sch23} and their algorithmic construction contributes to the aim of having an `algorithmic minimal model programme', see \cite{Laz24}.
In \cite{Yam18}, Yamagishi gives an algorithm to compute generators of such a Cox ring and, in the language of this article, the algorithm in fact computes a MUVAK basis.

We describe our setting and give the definition of a homogeneous Khovanskii basis in section~\ref{sec:homkhov}.
We further present a `homogeneous version' of the \emph{subduction algorithm} (Algorithm~\ref{alg:homsubduc}), that is, an algorithm to reduce elements by generators of a subalgebra.
As in \cite{KM19}, this algorithm does in general not terminate.
However, we are able to prove its termination for every input under the condition that any \(\Delta\)\=/graded component of \(A\) is of finite dimension (Proposition~\ref{prop:deltafindim}), which notably includes the setting of a polynomial ring with the standard grading.
We further give an algorithm to compute a homogeneous Khovanskii basis, see Algorithm~\ref{alg:homkhov}.
The algorithm is similar to the one given in \cite{KM19} and relies in the same way on the equality of two ideals.
We prove the corresponding theorem (Theorem~\ref{thm:surjeqeq}) only under the assumption that the subduction algorithm (provably) terminates.

MUVAK bases are introduced in section~\ref{sec:muvak} and we show that a homogeneous MUVAK basis for just one valuation is a homogeneous Khovanskii basis (Theorem~\ref{thm:muvakkhov}).
We give an algorithm to compute such bases (Algorithm~\ref{alg:muvak}), however, only in the case where all valuations map to \(\ZZ\).

Just as for SAGBI and Khovanskii bases, a finite homogeneous Khovanskii basis or finite MUVAK basis does in general not exist for a given algebra.
In section~\ref{sec:ex}, we study several examples showing how the cardinality of a homogeneous Khovanskii basis may depend on the chosen grading.
In particular, we give an example for an algebra that has a finite homogeneous Khovanskii basis for one grading, but there is no finite basis for another one (Example~\ref{ex:inf}).

We finally discuss our initial motivation for this article, namely the computation of Cox rings of minimal models of quotient singularities in section~\ref{sec:cox}.
We recall a theorem by Yamagishi \cite{Yam18} and Grab \cite{Gra19} that characterizes a generating system for such a Cox ring by a MUVAK basis.
In \cite{Yam18}, Yamagishi gives an algorithm to compute such generators.
We implemented the algorithms in this paper in the computer algebra system OSCAR \cite{Osc,DEFHJ25} and compared these approaches with our implementation of Yamagishi's algorithm stemming from \cite{Sch23}.
Our algorithms appear to be an improvement of Yamagishi's algorithm, see the timings in Table~\ref{tab:timings}.
The code used for these timings is freely available online at \begin{center}\url{https://gitlab.com/math5724907/homogeneouskhovanskii}.\end{center}

In the appendix, we describe an efficient method to compute the homogenization of an ideal with respect to non\-/negative weights by a generalization of `Bayer's method'.
This is relevant for a performant implementation of both Algorithm~\ref{alg:muvak} and Yamagishi's algorithm.

\subsection*{Acknowledgements}
I thank Tommy Hofmann for helpful comments on an early version of this article.
This work was supported by the SFB\-/TRR~195 `Symbolic Tools in Mathematics and their Application' of the German Research Foundation (DFG).

\section{Homogeneous Khovanskii bases}
\label{sec:homkhov}

\subsection{Setting}

Throughout, let \(K\) be a field and let \(A\leq B\) be finitely generated \(K\)\=/algebras and domains.
Let \((\Gamma, +, \succeq)\) be a \emph{linearly ordered group}, that is, an abelian group \((\Gamma, +)\) equipped with a total ordering \(\succeq\) such that \(\gamma_1 \succeq \gamma_2\) implies \(\gamma_1 + \gamma_3 \succeq \gamma_2 + \gamma_3\) for all \(\gamma_1,\gamma_2,\gamma_3\in\Gamma\).

\begin{definition}[Valuations]
  A function \(v:B\setminus\{0\}\to \Gamma\) is a \emph{valuation over \(K\)} if \(v\) satisfies the following axioms:
  \begin{enumerate}[(i)]
    \item For all \(0\neq f, g\in B\) with \(0\neq f + g\), we have \(v(f + g) \succeq\min\{v(f), v(g)\}\).
    \item For all \(0\neq f, g\in B\), we have \(v(fg) = v(f) + v(g)\).
    \item For all \(0\neq f\in B\) and \(0\neq \lambda\in K\), we have \(v(\lambda f) = v(f)\).
  \end{enumerate}
\end{definition}

In the following, let \(v:B\setminus\{0\}\to \Gamma\) be a valuation over \(K\).
The valuation \(v\) gives a \(\Gamma\)\=/filtration \(\mathcal F_v = (F_{v\succeq \gamma})_{\gamma\in\Gamma}\) on \(B\) in the following way.
For \(\gamma\in\Gamma\), we set \[F_{v\succeq \gamma} \coloneqq \{f\in B\setminus \{0\}\mid v(f)\succeq\gamma\}\cup\{0\}.\]
Similarly, we define \(F_{v\succ \gamma}\) and obtain the corresponding associated graded algebra \[\gr_v(B) \coloneqq \bigoplus_{\gamma\in\Gamma}F_{v\succeq\gamma}/F_{v\succ\gamma}.\]
Finally, we assume that \(A\) is graded by a finitely generated abelian group \(\Delta\).
We denote the corresponding degree function by \(\deg_\Delta\).

\begin{definition}[Initial forms]
  \begin{enumerate}[(i)]
    \item Let \(f\in B\setminus\{0\}\) with valuation \(v(f) = \gamma\).
       We write \(\initial_v(f)\in\gr_v(B)\) for the residue class of \(f\) in \(F_{v\succeq\gamma}/F_{v\succ\gamma}\).
       We put \(\initial_v(0) = 0\in(\gr_v(B))_0\).
     \item Let \(f\in A\setminus\{0\}\) be \(\Delta\)\=/homogeneous of degree \(\deg_\Delta(f) = \delta \in \Delta\).
       We define \[\initial_v^\Delta(f) \coloneqq \initial_v(f)\otimes\delta \in \gr_v(B)\otimes_K K\Delta\] where \(K\Delta\) is the group ring of \(\Delta\).
  \end{enumerate}
\end{definition}

\begin{notation}
  For a subset \(S\subseteq A\), we denote by \(\initial_v^\Delta(S) \leq \gr_v(B)\otimes_K K\Delta\) the subalgebra generated by the set \(\{\initial_v^\Delta(f)\mid f\in S\text{ is \(\Delta\)\=/homogeneous}\}\).
\end{notation}

\begin{definition}[Homogeneous Khovanskii bases]
  Let \(\mathcal G\subseteq A\) be a set of \(\Delta\)\=/homogeneous generators of \(A\) as a \(K\)\=/algebra.
  We call \(\mathcal G\) a \emph{\(\Delta\)\=/homogeneous Khovanskii basis} of \(A\) with respect to \(v\), if the set \(\{\initial_v^\Delta(f)\mid f\in \mathcal G\}\) generates \(\initial_v^\Delta(A)\) as a \(K\)\=/algebra.
\end{definition}

We usually just write \emph{homogeneous Khovanskii basis} if the grading is clear from the context.

\begin{example}
  \label{ex:khovrun1}
  We use the following running example to illustrate the results in this section.
  To help with navigation between the examples, we note that this example is continued in \ref{ex:khovrun2} and \ref{ex:khovrun3}.
  Let \(K\) be a field of characteristic different from 2, \(B = K[x, y]\) and \(v:B\setminus\{0\}\to\ZZ\) the order of divisibility by \(x\) with the natural ordering on \(\ZZ\).
  We have \[\gr_v(B) = K[y] \oplus x K[y] \oplus x^2 K[y] \oplus \cdots = B.\]
  Let \(f_1 \coloneqq x + y,f_2 \coloneqq x - y\in B\) and set \(A = K[f_1,f_2]\).
  Clearly, \(A = B\).
  We endow \(A\) with a grading by \(\Delta \coloneqq \ZZ/2\ZZ\) via \(\deg_\Delta(f_1) = \bar 0\) and \(\deg_\Delta(f_2) = \bar 1\).
  We have \(v(f_1) = v(f_2) = 0\) and compute the initial forms:
  \[\begin{array}{c|c}
    & \initial_v^\Delta \\
    \hline
    f_1 & y\otimes\bar 0 \\
    f_2 & -y\otimes\bar 1
  \end{array}\]
  We claim that \(\{f_1,f_2\}\) is not a \(\Delta\)\=/homogeneous Khovanskii basis of \(A\) with respect to \(v\).
  Indeed, \(xy = \frac{1}{4}(f_1^2 - f_2^2)\in A\) is homogeneous of degree \(\deg_\Delta(xy) = \bar 0\) and hence \(\initial_v^\Delta(xy) = xy\otimes\bar 0\in\initial_v^\Delta(A)\).
  However, the initial forms \(\initial_v^\Delta(f_i)\) do not generate \(xy\otimes\bar 0\).
\end{example}

\begin{remark}
  Just as in \cite{KM19}, we do not require a homogeneous Khovanskii basis to be finite and in general a finite basis does not exist.
  In our application in section~\ref{sec:cox}, the existence of a finite homogeneous Khovanskii basis is guaranteed by deep theoretical arguments, see section~\ref{subsec:quotsing}.
\end{remark}

\begin{remark}
  Our definition of a homogeneous Khovanskii basis is a direct generalization of Khovanskii bases as defined in \cite{KM19}.
  More precisely, a Khovanskii basis in the sense of \cite{KM19} is a homogeneous Khovanskii basis in the special case where \(\Delta = \{0\}\) is the trivial group and \(A = B\).
\end{remark}

\begin{remark}
  \label{rem:sagbi}
  SAGBI bases \cite{RS90} are a special case of homogeneous Khovanskii bases as well.
  Here, \(B = K[X_1,\dots,X_n]\) is a polynomial ring, \(\Delta = \{0\}\) is the trivial group and the valuation \(v\) is induced by a monomial ordering.
  More precisely, a monomial ordering \(\geq\) on \(B\) induces a total linear ordering \(\succeq\) on \(\ZZ^n\) by setting \[\gamma_1 \succeq \gamma_2 \Longleftrightarrow X^{\gamma_1} \leq X^{\gamma_2}\] for \(\gamma_1,\gamma_2\in \ZZ^n\), where by \(X^{\gamma_i}\) we mean the monomial with exponents \(\gamma_i\).
  This gives a valuation \(v:B\setminus\{0\}\to \ZZ^n\) over \(K\) by putting \(v(f) = \gamma\) for \(f\in B\setminus\{0\}\) with leading monomial \(\operatorname{LM}_\geq(f) = X^\gamma\).
  Notice that by construction of \(\succeq\), the valuation of \(f\) is the \emph{smallest} exponent among the monomials of \(f\) with respect to \(\succeq\).
  In this way, the (ascending) filtration one associates to \(\geq\), see \cite[Example~6.5.4]{KR05}, is equivalent to the (descending) filtration we associate to \(v\).
  With this construction, we have \(\gr_v(B) = B\) and \(\initial_v(f) = \operatorname{LM}_\geq(f)\) for all \(f\in B\).
\end{remark}

Introducing the grading by \(\Delta\) is motivated by our application in section~\ref{sec:cox} and appears to be a natural extension of the existing theory.

\subsection{The homogeneous subduction algorithm}
An essential tool for the computation of SAGBI or Khovanskii bases is the so\-/called \emph{subduction algorithm} \cite[{}1.5]{RS90}, \cite[Algorithm~11.1]{Stu96}, \cite[Algorithm~2.11]{KM19}.
This algorithm is an analogue of division with remainder in a polynomial ring and allows one to write an element of the algebra as a polynomial in the given basis.
We now give a \(\Delta\)\=/homogeneous version of this algorithm.

We denote by \(\deg_v\) the degree function on \(\gr_v(B)\) corresponding to the grading by \(\Gamma\) and extend this grading trivially to the tensor product \(\gr_v(B)\otimes_K K\Delta\) by setting \(\deg_v(f\otimes \delta) \coloneqq \deg_v(f)\) for \(f\in \gr_v(B)\) homogeneous and \(\delta\in K\Delta\).
The group ring \(K\Delta\) is naturally graded by \(\Delta\) and we write \(\deg_\Delta\) for the corresponding degree function.
Again, we extend this grading to \(\gr_v(B)\otimes_K K\Delta\) in the natural way.

\begin{notation}
  For a polynomial \(h = \sum_{a\in\ZZ_{\geq 0}^n}\lambda_aX_1^{a_1}\cdots X_n^{a_n}\in K[X_1,\dots, X_n]\), we denote by \(\Supp(h) = \{\lambda_aX_1^{a_1}\cdots X_n^{a_n}\mid \lambda_a\neq 0\}\) the \emph{support} of \(h\), that is, the set of non\=/zero terms.
\end{notation}

We present the homogeneous subduction algorithm in Algorithm~\ref{alg:homsubduc}.
For full generality, we state the algorithm with a possibly infinite set \(\mathcal G\).
In the algorithm, we work with subrings of the polynomial ring \(K[X_f\mid f\in \mathcal G]\) with variables labelled by the elements of \(\mathcal G\), so \(X_{f_i}\) denotes a variable corresponding to \(f_i\in\mathcal G\).

\begin{algorithm}
  \caption{Homogeneous subduction}
  \label{alg:homsubduc}
  \Input{A \(\Delta\)\=/homogeneous element \(f\in A\) and a set \(\mathcal G\subseteq A\) of \(\Delta\)\=/homogeneous elements}
  \Output{A finite subset \(\{f_1,\dots,f_k\}\subseteq\mathcal G\), a polynomial \(h\in K[X_1,\dots, X_k]\) and an element \(r\in A\), which fulfil all of the following conditions:
  \begin{enumerate}[(1)]
    \item \label{alg:homsubduc:1}\(f = h(f_1,\dots, f_k) + r\),
    \item \label{alg:homsubduc:2}for every \(t\in\Supp(h)\), we have \(v(t(f_1,\dots,f_k))\succeq v(f)\) and \(\deg_\Delta(t(f_1,\dots,f_k)) = \deg_\Delta(f)\),
    \item \label{alg:homsubduc:3}either \(h = 0\) or \(v(r) \succ v(f)\),
    \item \label{alg:homsubduc:4}either \(r = 0\) or \(r\) is \(\Delta\)\=/homogeneous with \(\deg_\Delta(r) = \deg_\Delta(f)\),
    \item \label{alg:homsubduc:5}either \(r = 0\) or \(\initial_v^\Delta(r)\notin\initial_v^\Delta(\mathcal G)\).
  \end{enumerate}}
  \BlankLine
  \(\mathcal G'\coloneqq \emptyset\)\;
  \(h \coloneqq 0\)\;
  \(r \coloneqq f\)\;
  \While{\(r \neq 0\) and \(\initial_v^\Delta(r)\in\initial_v^\Delta(\mathcal G)\)}{
    Find a finite subset \(\{f_1,\dots,f_k\}\subseteq\mathcal G\) and a polynomial \(h'\in K[X_{f_1},\dots, X_{f_k}]\) with \(h'(\initial_v^\Delta(f_1),\dots,\initial_v^\Delta(f_k)) = \initial_v^\Delta(r)\), \(v(t(f_1,\dots,f_k)) \succeq v(r)\) and \(\deg_\Delta(t(f_1,\dots,f_k)) = \deg_\Delta(r)\) for every \(t\in\Supp(h')\)\;
    \(\mathcal G' \coloneqq \mathcal G'\cup \{f_1,\dots, f_k\}\)\;
    \(r \coloneqq r - h'(f_1,\dots, f_k)\)\;
    \(h \coloneqq h + h'\)\;
  }
  \Return{\(\mathcal G'\), \(h\), \(r\)}
\end{algorithm}

\begin{remark}
  \label{rem:alghomsubduc}
  Note that the conditions on the terms of \(h'\) in line 5 of Algorithm~\ref{alg:homsubduc} can be easily achieved.
  By assumption, \(\initial_v^\Delta(r)\in\initial_v^\Delta(\mathcal G)\), so there is a finite set \(\{f_1,\dots,f_k\}\subseteq\mathcal G\) and a polynomial \(h'\in K[X_1,\dots, X_k]\) with \[h'(\initial_v^\Delta(f_1),\dots,\initial_v^\Delta(f_k)) = \initial_v^\Delta(r).\]
  We endow the polynomial ring \(K[X_1,\dots, X_k]\) with a \(\Gamma\)- and a \(\Delta\)\=/grading by putting \(\deg_\Gamma(X_i) \coloneqq v(f_i)\) and \(\deg_\Delta(X_i) \coloneqq \deg_\Delta(f_i)\).
  We can now remove all terms \(t\in\Supp(h')\) from \(h'\) with \(\deg_\Gamma(t) \neq v(r)\) or \(\deg_\Delta(t) \neq \deg_\Delta(r)\).
  This does not change \(h'(\initial_v^\Delta(f_1),\dots,\initial_v^\Delta(f_k))\) by \(\Gamma\)- and \(\Delta\)\=/homogeneity of \(\initial_v^\Delta(r)\).
  In other words, \(h'\) is a homogeneous preimage of \(\initial_v^\Delta(r)\) under the \((\Gamma,\Delta)\)\=/graded morphism given by \(X_i\mapsto \initial_v^\Delta(f_i)\).
\end{remark}

\begin{proposition}
  Algorithm~\ref{alg:homsubduc} is correct assuming it terminates.
\end{proposition}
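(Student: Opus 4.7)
The plan is to verify the five output conditions by a loop invariant argument. The invariants to maintain at the top of each iteration (including after the last) are the following statements about the current values of $\mathcal B' = \{f_1,\dots,f_k\}$, $h$ and $r$: (I1) $f = h(f_1,\dots,f_k) + r$; (I2) for every $t\in\Supp(h)$, $v(t(f_1,\dots,f_k))\succeq v(f)$ and $\deg_\Delta(t(f_1,\dots,f_k)) = \deg_\Delta(f)$; (I3) either $h = 0$ or $v(r)\succ v(f)$; (I4) either $r = 0$ or $r$ is $\Delta$-homogeneous of degree $\deg_\Delta(f)$. Once these hold on exit, conditions (1)--(4) of the output specification follow immediately, and condition (5) is nothing but the negation of the loop guard, which caused the loop to exit.

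Initialisation is trivial, since $\mathcal B' = \emptyset$, $h = 0$ and $r = f$ make all four invariants hold. For maintenance, I would fix one iteration and denote the pre-iteration data by $h,r$ and the post-iteration data by $h + h'$, $r - h'(f_1,\dots,f_k)$. Invariant (I1) is preserved by a direct calculation, and invariant (I4) by noting that, by construction of $h'$ via Remark~\ref{rem:alghomsubduc}, $h'(f_1,\dots,f_k)$ is $\Delta$-homogeneous of degree $\deg_\Delta(r) = \deg_\Delta(f)$, so the difference is either $0$ or $\Delta$-homogeneous of the same degree. Invariant (I2) is inherited for old terms from the induction hypothesis, and for terms of $h'$ from the conditions imposed in line~5 together with $v(r)\succeq v(f)$ (which holds by (I3) or because $r = f$ initially) and $\deg_\Delta(r) = \deg_\Delta(f)$.

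The only step that is not bookkeeping is invariant (I3), and this is where I expect the actual work. The point is that $h'$ was chosen so that
\[
h'(\initial_v^\Delta(f_1),\dots,\initial_v^\Delta(f_k)) = \initial_v^\Delta(r).
\]
Unwinding the definition of $\initial_v^\Delta$, I would argue that the image of $h'(f_1,\dots,f_k)$ under $B\to \gr_v(B)$, selected in its valuation-degree part $v(r)$, agrees with $\initial_v(r)$. Concretely, using axiom (ii) of a valuation, each monomial $t(f_1,\dots,f_k)$ with $\deg_\Gamma(t) = v(r)$ has $v(t(f_1,\dots,f_k)) = v(r)$, while by the filtering performed in Remark~\ref{rem:alghomsubduc} the remaining monomials satisfy $v(t(f_1,\dots,f_k))\succ v(r)$. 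Summing, the component of $h'(f_1,\dots,f_k)$ of valuation exactly $v(r)$ has the same initial form as $r$, so by axiom (i) the difference has strictly larger valuation: $v\bigl(r - h'(f_1,\dots,f_k)\bigr) \succ v(r) \succeq v(f)$. Combined with the fact that $h + h' \neq 0$ after this step, this yields (I3) in the next iteration.

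After establishing that all four invariants propagate, I would conclude: at termination the loop guard fails, so $r = 0$ or $\initial_v^\Delta(r)\notin\initial_v^\Delta(\mathcal B)$, giving (5); (1)--(4) are read off from (I1)--(I4). The main conceptual content of the proof is thus the verification of (I3), whereas the rest is essentially a translation of how the updates of $h$, $r$ and $\mathcal B'$ interact with the invariants.
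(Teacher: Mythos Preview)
Your proposal is correct and follows essentially the same loop-invariant strategy as the paper's proof. Two minor remarks: first, the paper derives condition~(4) from (1) and (2) after the loop rather than carrying it as a separate invariant, but this is only an organizational difference; second, your claim that $h + h' \neq 0$ after each step is unnecessary for (I3), since you have already established $v(r_{\mathrm{new}}) \succ v(f)$, which is the other disjunct.
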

\begin{proof}
  Assume the algorithm terminates for a given input \(f\) and \(\mathcal G\) with a triple \(\mathcal G'\), \(h\) and \(r\).
  We need to show that \(\mathcal G'\), \(h\) and \(r\) fulfil the conditions \ref{alg:homsubduc:1} to \ref{alg:homsubduc:5} in Algorithm~\ref{alg:homsubduc}.

  If the algorithm terminates, condition \ref{alg:homsubduc:5} is fulfilled by construction of the \texttt{while}\=/loop.
  Further, condition \ref{alg:homsubduc:4} follows from \ref{alg:homsubduc:1} and \ref{alg:homsubduc:2}: indeed, \ref{alg:homsubduc:2} implies that \(h(f_1,\dots,f_k)\) is \(\Delta\)\=/homogeneous of degree \(\deg_\Delta(f)\) whenever \(h\neq 0\), hence so is \(r = f - h(f_1,\dots, f_k)\) by \ref{alg:homsubduc:1}.

  The conditions \ref{alg:homsubduc:1} to \ref{alg:homsubduc:3} are certainly fulfilled before the start of the \texttt{while}\=/loop.
  We prove that they are maintained during the run of the loop, that is, that after each iteration of the loop \ref{alg:homsubduc:1} to \ref{alg:homsubduc:3} hold.
  Let \(\mathcal G'\), \(h\) and \(r\) be the values at the beginning of a given iteration of the loop and let \(h'\) be the polynomial chosen during this iteration.
  Condition \ref{alg:homsubduc:1} is maintained by construction.
  By \ref{alg:homsubduc:3}, we have \(v(r)\succeq v(f)\) at the beginning of the loop, hence \(v(t(f_1,\dots,f_k)) \succeq v(f)\) for every \(t\in\Supp(h')\) and this gives \ref{alg:homsubduc:2} (see Remark~\ref{rem:alghomsubduc} for the feasibility of line 5 of the algorithm).

  Set \(s \coloneqq h'(f_1,\dots, f_k)\in A\).
  We have \(\initial_v(s) = h'(\initial_v(f_1), \dots, \initial_v(f_k)) = \initial_v(r)\), so \[\deg_v(\initial_v(r - s)) \succ \deg_v(\initial_v(r))\] and hence \(v(r - s) \succ v(r)\) which implies condition \ref{alg:homsubduc:3}.
\end{proof}

\begin{example}
  \label{ex:khovrun2}
  This example is part of the series also containing \ref{ex:khovrun1} and \ref{ex:khovrun3}.
  To illustrate Algorithm~\ref{alg:homsubduc}, we run it on a small example.
  Let \[f = \frac{1}{4}(f_1^2 - f_2^2) + f_1 = xy + x + y\in A\] and \(\mathcal G = \{f_1,f_2\}\).
  We see that \(\deg_\Delta(f) = \bar 0\in\ZZ/2\ZZ\).
  Then \(\initial_v^\Delta(f) = y\otimes\bar 0 = \initial_v^\Delta(f_1)\), so the first iteration of the subduction algorithm yields \(\mathcal G' = \{f_1\}\), \(h = X_1\) and \(r = f - f_1 = xy\).
  Now \(\initial_v^\Delta(f) = xy\otimes \bar 0\notin \initial_v^\Delta(\mathcal G)\) and the algorithm terminates with non\-/trivial remainder \(r\).
\end{example}

In general, there is no guarantee that Algorithm~\ref{alg:homsubduc} terminates, see Example~\ref{ex:nosub}.
We say that Algorithm~\ref{alg:homsubduc} \emph{terminates for} \(A\), if the algorithm terminates for every \(\Delta\)\=/homogeneous \(f\in A\) and every set \(\mathcal G\subseteq A\) of \(\Delta\)\=/homogeneous elements after finitely many steps.
We now give some conditions that ensure the termination.

\begin{remark}
  Assume that the set \(S(A, v) \coloneqq \{v(f)\mid 0\neq f\in A\}\) is \emph{maximum well\-/ordered}, that is, every subset of \(S(A, v)\) has a maximal element with respect to \(\succeq\).
  As in \cite[Proposition~2.13]{KM19}, we see that Algorithm~\ref{alg:homsubduc} terminates for \(A\) in this case as the valuation of \(r\) properly increases in every iteration.

  If \(B\) is a polynomial ring and \(v\) is induced by a monomial ordering \(>\) as in Remark~\ref{rem:sagbi}, then the maximum well\-/ordered property corresponds to \(>\) being a (minimum) well\-/ordering.
  In other words, \(>\) is a global ordering (in the terminology of \cite{GP08}) and termination is guaranteed in the setting of SAGBI bases.
\end{remark}

We need the following lemma for a different condition.
\begin{lemma}
  \label{lem:valbounded}
  Let \(0\neq V\leq A\) be a finite\-/dimensional \(K\)\=/vector subspace of \(A\).
  Then there is \(\gamma\in\Gamma\) such that \(v(a) \preceq \gamma\) for all \(a\in V\setminus\{0\}\).
\end{lemma}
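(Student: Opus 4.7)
The plan is to prove the stronger statement that the set $S \coloneqq \{v(a) \mid a\in V\setminus\{0\}\} \subseteq \Gamma$ is in fact finite, with $|S|\leq \dim_K V$. Since $\Gamma$ is totally ordered, any finite nonempty subset has a maximum, which then furnishes the desired upper bound $\gamma$.

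The key lemma I would establish is that nonzero elements of $V$ with pairwise distinct valuations are automatically $K$\=/linearly independent --- this is the standard fact for non\-/Archimedean valuations. Pick elements $a_1,\dots,a_k\in V\setminus\{0\}$ and order them so that $v(a_1)\prec\dots\prec v(a_k)$; assume a nontrivial relation $\sum_{i=1}^k c_i a_i = 0$ with $c_i\in K$, let $j$ be the smallest index with $c_j\neq 0$, and compare valuations on the two sides of $c_j a_j = -\sum_{i>j}c_i a_i$. By axiom (iii) the left\-/hand side has valuation $v(a_j)$. On the right\-/hand side, iterated application of the ultrametric inequality (axiom (i)), combined with $v(a_i)\succ v(a_j)$ for every $i>j$, forces the valuation to be strictly $\succ v(a_j)$ (the case that the right\-/hand side is zero yields $c_j a_j = 0$ and hence $c_j = 0$ directly). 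Either way we reach a contradiction, so the $a_i$ must be linearly independent.

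From this lemma, if $|S|\geq n+1$ with $n\coloneqq\dim_K V$, one could select $n+1$ elements of $V\setminus\{0\}$ with pairwise distinct valuations, yielding $n+1$ $K$\=/linearly independent elements and contradicting $\dim_K V = n$. Hence $S$ is finite, has a maximum $\gamma\in\Gamma$, and this $\gamma$ is the element asserted by the lemma. No part of the argument is truly delicate: the only point requiring some care is that $\Gamma$ is an arbitrary totally ordered abelian group (no Archimedean or well\-/foundedness property is available), but since the reasoning relies solely on the ordering axioms on $\Gamma$ together with the ultrametric inequality, it goes through without modification.
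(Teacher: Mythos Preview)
Your argument is correct. You prove the sharper statement that the value set \(S=\{v(a)\mid a\in V\setminus\{0\}\}\) has cardinality at most \(\dim_K V\), via the standard ultrametric fact that elements with pairwise distinct valuations are linearly independent; the bound \(\gamma=\max S\) then exists because \(S\) is finite. The only step requiring care is the iterated ultrametric inequality for a sum of more than two terms (since axiom~(i) is stated for two summands and only when the sum is nonzero), but this is routine and you address the degenerate case where the right-hand side vanishes.

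The paper takes a different route: it argues by induction on \(\dim_K V\). Given a basis \(a_1,\dots,a_k\), the inductive hypothesis bounds the valuation on the hyperplane \(V'=\langle a_1,\dots,a_{k-1}\rangle\) by some \(\gamma\); if \(\gamma\) does not bound on all of \(V\), one picks an element \(a\) with \(v(a)\succ\gamma\) and checks by an explicit coordinate computation that \(v(a)\) is then an upper bound on \(V\). Your approach is more conceptual and yields the stronger finiteness statement \(|S|\leq\dim_K V\) for free, while the paper's inductive argument is more hands-on and does not extract this cardinality bound.
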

\begin{proof}
  For \(\dim(V) = 1\), there is nothing to prove.
  Let \(\dim(V) = k\) and let \(a_1,\dots, a_k\in V\) be a basis.
  Write \(V' \coloneqq \langle a_1,\dots, a_{k - 1}\rangle_K\) for the subspace of dimension \(k - 1\).
  By induction, there exists \(\gamma\in\Gamma\) such that \(v(a) \preceq \gamma\) for all \(a\in V'\setminus\{0\}\).
  If also \(v(a) \preceq \gamma\) for all \(a\in V\setminus\{0\}\), we are done, so assume there are \(\lambda_1,\dots,\lambda_k\in K\), not all 0, such that for \(a \coloneqq \sum_{i = 1}^k\lambda_ia_i\) we have \(v(a) \succ \gamma\).
  In particular, \(a\notin V'\), so \(\lambda_k \neq 0\).

  We claim that \(v(a)\) gives an upper bound for the valuation on \(V\).
  Let \(0\neq b = \sum_{i = 1}^k\mu_ia_i\in V\) be any element.
  If \(\mu_k = 0\), then \(b\in V'\) and \(v(b) \preceq \gamma \prec v(a)\).
  Otherwise we compute
  \begin{align*}
    v(b) &= v\Big(a_k + \sum_{i = 1}^{k - 1}\frac{\mu_i}{\mu_k}a_i\Big) = v\bigg(\!a_k + \sum_{i = 1}^{k - 1}\frac{\lambda_i}{\lambda_k}a_i + \underbrace{\sum_{i = 1}^{k - 1}\Big(\frac{\mu_i}{\mu_k} - \frac{\lambda_i}{\lambda_k}\Big)a_i}_{\coloneqq c}\!\!\bigg)\\
         &= v(\lambda_k^{-1}a - c).
  \end{align*}
  If \(c = 0\), then \(v(b) = v(a)\).
  Otherwise \(v(c) \preceq \gamma\) since \(c\in V'\), so \(v(c) \neq v(a)\) and hence \(v(b) = \min\{v(a), v(c)\} = v(c)\).
\end{proof}

\begin{proposition}
  \label{prop:deltafindim}
  If the vector spaces \(A_\delta\) are of finite dimension for all \(\delta\in\Delta\), then Algorithm~\ref{alg:homsubduc} terminates for \(A\).
\end{proposition}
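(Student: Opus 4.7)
The plan is to exhibit a monovariant for the \texttt{while}\=/loop of Algorithm~\ref{alg:homsubduc} taking values in a \emph{finite} subset of \(\Gamma\); once this is done, the loop cannot run for infinitely many iterations. First I would establish a loop invariant, then prove the relevant valuation set is finite, and finally conclude.

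As a loop invariant, the current remainder \(r\) is always either \(0\) or \(\Delta\)\=/homogeneous of degree \(\delta \coloneqq \deg_\Delta(f)\), where \(f\) is the input. This is maintained by exactly the argument used in the correctness proof: initially \(r = f \in A_\delta\), and in every iteration Remark~\ref{rem:alghomsubduc} ensures that the subtracted polynomial \(h'(f_1,\dots,f_k)\) is \(\Delta\)\=/homogeneous of degree \(\deg_\Delta(r)\), so the updated \(r\) stays in \(A_\delta\cup\{0\}\). By hypothesis, \(A_\delta\) is finite\-/dimensional.

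The key step is to prove that the set \(S_\delta \coloneqq \{v(a)\mid 0\neq a\in A_\delta\}\subseteq\Gamma\) is finite, of cardinality at most \(\dim_KA_\delta\). Given distinct elements \(\gamma_1\prec\cdots\prec\gamma_n\) in \(S_\delta\) with witnesses \(a_i\in A_\delta\) satisfying \(v(a_i) = \gamma_i\), the standard argument -- in any minimal nontrivial relation \(\sum\lambda_ia_i = 0\), isolating the summand of smallest index and comparing valuations using axioms (i) and (iii) contradicts \(v(a_j) = \gamma_j\) because the right\-/hand side has valuation \(\succeq\gamma_{j+1}\succ\gamma_j\) -- shows that the \(a_i\) are \(K\)\=/linearly independent, forcing \(n\leq\dim_KA_\delta\). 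Since every iteration of the loop strictly increases \(v(r)\) (already verified in the correctness proof from \(\initial_v(h'(f_1,\dots,f_k)) = \initial_v(r)\)), this yields a strictly \(\succ\)\=/increasing sequence in the finite set \(S_\delta\), so the loop halts after at most \(\dim_KA_\delta\) iterations.

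I expect the finiteness of \(S_\delta\) to be the only nontrivial step; both the invariant \(r\in A_\delta\cup\{0\}\) and the strict increase of \(v(r)\) are essentially already in the correctness proof. It is worth noting that Lemma~\ref{lem:valbounded} by itself, which only provides an upper bound on the valuations on \(A_\delta\), would not be enough: in a general linearly ordered group (such as \(\QQ\) with its usual order) one can have infinite bounded strictly increasing sequences, so the sharper finiteness statement is really required.
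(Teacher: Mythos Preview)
Your proof is correct and follows essentially the same outline as the paper: the remainder stays in the finite\-/dimensional component \(A_\delta\), and its valuation strictly increases at each iteration. Where the paper invokes Lemma~\ref{lem:valbounded} to conclude, you instead prove directly that \(|S_\delta|\leq\dim_KA_\delta\) via a linear\-/independence argument; your remark that a mere upper bound would not suffice in an arbitrary linearly ordered group \(\Gamma\) is well taken --- the finiteness of the valuation set is implicit in the \emph{proof} of Lemma~\ref{lem:valbounded} rather than in its statement, so your version is more self\-/contained on this point and gives an explicit bound on the number of iterations.
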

\begin{proof}
  During the algorithm, the element \(r\) remains in the graded component \(A_\delta\) with \(\delta = \deg_\Delta(f)\).
  The valuation of \(r\) properly increases in every iteration of the \texttt{while}-loop.
  By Lemma~\ref{lem:valbounded}, this must reach a maximum after finitely many steps.
\end{proof}

Note that Proposition~\ref{prop:deltafindim} applies in the case where \(A\) is a subalgebra of a polynomial ring that is standard graded or graded by positive weights.

\subsection{Computing homogeneous Khovanskii bases}
We generalize the algorithm for the computation of a Khovanskii basis from \cite{KM19} to our setting.
We prove correctness of the algorithm under a less restrictive assumption.

Let \(f_1,\dots,f_k\in A\) be \(\Delta\)\=/homogeneous generators of \(A\) as a \(K\)\=/algebra.
We consider the two morphisms \[\alpha:K[X_1,\dots, X_k]\to A,\ X_i\mapsto f_i\] and \[\beta:K[X_1,\dots,X_k]\to\initial_v^\Delta(A),\ X_i\mapsto\initial_v^\Delta(f_i).\]
We endow the polynomial ring \(K[X_1,\dots,X_k]\) with a grading by \(\Delta\) via \(\deg_\Delta(X_i) \coloneqq \deg_\Delta(f_i)\) and a grading by \(\Gamma\) via \(\deg_\Gamma(X_i) \coloneqq v(f_i) = \deg_v(\initial_v^\Delta(f_i))\).
The morphisms \(\alpha\) and \(\beta\) are \(\Delta\)\=/graded by construction and \(\beta\) is a \(\Gamma\)\=/graded morphism.

\begin{notation}
  We write \(\initial_\Gamma(h)\in K[X_1,\dots,X_k]\) for the sum of the terms of minimal \(\Gamma\)\=/degree of a polynomial \(h\in K[X_1,\dots, X_k]\) (with respect to \(\succeq\)).
\end{notation}

\begin{lemma}
  \label{lem:initial}
  Let \(h\in K[X_1,\dots, X_k]\).
  We have \(\deg_\Gamma(\initial_\Gamma(h)) \preceq v(\alpha(h))\).
  Further, we have
  \[\initial_\Gamma(h)(\initial_v(f_1),\dots,\initial_v(f_k)) =
    \begin{cases}
      \initial_v(\alpha(h))& \text{if }\deg_\Gamma(\initial_\Gamma(h)) = v(\alpha(h)),\\
      0 & \text{otherwise.}
  \end{cases}\]
\end{lemma}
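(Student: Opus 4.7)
The plan is a straightforward calculation using subadditivity of $v$ and the multiplicativity observation that for a single term $t = cX^a$ we have
\[v(\alpha(t)) = v(c f_1^{a_1}\cdots f_k^{a_k}) = \textstyle\sum_i a_i v(f_i) = \deg_\Gamma(t).\]

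First I would decompose $h = \initial_\Gamma(h) + h'$ where, by definition of $\initial_\Gamma$, every term of $h'$ has $\Gamma$\=/degree strictly greater than $\gamma \coloneqq \deg_\Gamma(\initial_\Gamma(h))$. Applying the observation above term by term together with the valuation axioms yields
\[v(\alpha(h')) \succeq \min_{t\in\Supp(h')}\deg_\Gamma(t) \succ \gamma \qquad\text{and}\qquad v(\alpha(\initial_\Gamma(h))) \succeq \gamma,\]
from which I already get $v(\alpha(h)) \succeq \gamma$, proving claim~(1).

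For claim~(2), the key observation is that evaluating at the $f_i$ and passing to $\gr_v(B)$ commutes with taking a single term, because $\initial_v$ is multiplicative on elements with non\-/cancelling valuation. Concretely, for each term $t = cX^a$ of $\initial_\Gamma(h)$,
\[\alpha(t) + F_{v\succ\gamma} = c\cdot\initial_v(f_1)^{a_1}\cdots\initial_v(f_k)^{a_k} \in F_{v\succeq\gamma}/F_{v\succ\gamma}.\]
Summing over the terms of $\initial_\Gamma(h)$ shows that the image of $\alpha(\initial_\Gamma(h))$ in $F_{v\succeq\gamma}/F_{v\succ\gamma}$ is exactly $\initial_\Gamma(h)(\initial_v(f_1),\dots,\initial_v(f_k))$.

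It then remains to distinguish two cases. If $\initial_\Gamma(h)(\initial_v(f_1),\dots,\initial_v(f_k))\neq 0$, then $v(\alpha(\initial_\Gamma(h))) = \gamma$, so combined with $v(\alpha(h'))\succ\gamma$ we get $v(\alpha(h)) = \gamma$ and $\initial_v(\alpha(h)) = \initial_\Gamma(h)(\initial_v(f_1),\dots,\initial_v(f_k))$, which is the first line of the case distinction. If instead the evaluation is $0$ in $\gr_v(B)$, then $v(\alpha(\initial_\Gamma(h)))\succ\gamma$, hence $v(\alpha(h))\succ\gamma = \deg_\Gamma(\initial_\Gamma(h))$, putting us in the second case. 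The only thing to watch out for is the direction of the ordering (the filtration is descending and $\initial_\Gamma$ picks out the $\preceq$\=/minimal terms), but no genuine obstacle arises.
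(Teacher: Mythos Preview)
Your proof is correct and follows essentially the same route as the paper's: both compute \(v\) term by term to get the inequality, and both identify \(\initial_\Gamma(h)(\initial_v(f_1),\dots,\initial_v(f_k))\) with the image of \(\alpha(\initial_\Gamma(h))\) in \(F_{v\succeq\gamma}/F_{v\succ\gamma}\). The only cosmetic difference is that you split cases on whether the evaluation vanishes, while the paper splits on whether \(\gamma = v(\alpha(h))\); these are contrapositives of each other and the underlying computation is identical.
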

\begin{proof}
  Write \(h = \sum_{a\in\ZZ_{\geq 0}^k}\lambda_a X_1^{a_1}\cdots X_k^{a_k}\).
  Then we have \[\deg_\Gamma(\initial_\Gamma(h)) = \min_{\substack{a\in\ZZ_{\geq 0}^k\\\lambda_a\neq 0}}v(f_1^{a_1}\cdots f_k^{a_k}) \preceq v(\alpha(h)).\]

  Let \(\gamma\coloneqq \deg_\Gamma(\initial_\Gamma(h))\).
  Assume \(\gamma \prec v(\alpha(h))\).
  Then \(\alpha(\initial_\Gamma(h)) \equiv 0\) in the quotient \(F_{v\succeq \gamma}/F_{v\succ\gamma}\), hence also \(\initial_\Gamma(h)(\initial_v(f_1),\dots,\initial_v(f_k)) = 0\).
  If \(\gamma = v(\alpha(h))\), then \[\initial_v(\alpha(h)) = \initial_v(\alpha(\initial_\Gamma(h))) = \initial_\Gamma(h)(\initial_v(f_1),\dots,\initial_v(f_k))\] as required.
\end{proof}

We can characterize a homogeneous Khovanskii basis via the equality of two ideals.
For this, let \(I\coloneqq \ker(\alpha)\) and set \(\initial_\Gamma(I) \coloneqq \langle \initial_\Gamma(h)\mid h\in I\rangle\ideal K[X_1,\dots, X_k]\).
Let further \(J\coloneqq \ker(\beta)\).

\begin{lemma}
  \label{lem:inIsubJw}
  We have \(\initial_\Gamma(I) \subseteq J\).
\end{lemma}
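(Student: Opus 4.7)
The plan is to show that every generator of $\initial_\Gamma(I)$ lies in $J$; that is, given an arbitrary $h\in I$, I would prove $\initial_\Gamma(h)\in J$, since this suffices for the ideal inclusion.

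The first reduction is to the case that $h$ is $\Delta$-homogeneous. Decomposing $h=\sum_{\delta\in\Delta}h_\delta$ into its $\Delta$-graded components and using that $\alpha$ is $\Delta$-graded while $A$ is $\Delta$-graded, each $\alpha(h_\delta)$ equals the $\delta$-component of $\alpha(h)=0$, so every $h_\delta$ again lies in $I$. Moreover, $\initial_\Gamma(h)$ is the sum of those $\initial_\Gamma(h_\delta)$ whose $\Gamma$-degree attains the minimum value among all $\delta$, so if $\initial_\Gamma(h_\delta)\in J$ for each $\delta$, then also $\initial_\Gamma(h)\in J$.

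Now suppose $h$ is $\Delta$-homogeneous of degree $\delta$. Then $\initial_\Gamma(h)$ is simultaneously $\Gamma$-homogeneous of some degree $\gamma$ and $\Delta$-homogeneous of degree $\delta$, so every monomial $X_1^{a_1}\cdots X_k^{a_k}$ appearing in $\initial_\Gamma(h)$ evaluates under $\beta$ to a product whose second tensor factor is precisely $\delta$. Consequently
\[\beta\bigl(\initial_\Gamma(h)\bigr) = \initial_\Gamma(h)\bigl(\initial_v(f_1),\dots,\initial_v(f_k)\bigr)\otimes\delta\]
in $\gr_v(B)\otimes_K K\Delta$, and it suffices to show that the first tensor factor vanishes in $\gr_v(B)$. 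For this I would mimic the argument from the proof of Lemma~\ref{lem:initial}: the tail $h-\initial_\Gamma(h)$ consists only of terms of $\Gamma$-degree strictly $\succ\gamma$, so $\alpha(h-\initial_\Gamma(h))\in F_{v\succ\gamma}$, and combined with $\alpha(h)=0$ this yields $\alpha(\initial_\Gamma(h))\in F_{v\succ\gamma}$. Passing to the quotient $F_{v\succeq\gamma}/F_{v\succ\gamma}$ forces $\initial_\Gamma(h)(\initial_v(f_1),\dots,\initial_v(f_k))=0$, as needed.

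The main subtlety is merely bookkeeping: keeping the two gradings ($\Gamma$ and $\Delta$) and the two flavours of initial form ($\initial_v$ and $\initial_v^\Delta$) straight. Once $\Delta$-homogeneity of $h$ has been arranged, the $\Delta$-factor in $\beta(\initial_\Gamma(h))$ becomes a harmless common factor $\otimes\delta$, and the question collapses to the statement already packaged inside Lemma~\ref{lem:initial}.
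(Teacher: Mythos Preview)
Your proof is correct and follows essentially the same route as the paper: reduce to \(\Delta\)-homogeneous \(h\in I\), then use that \(\alpha(\initial_\Gamma(h)) = -\alpha(h-\initial_\Gamma(h))\in F_{v\succ\gamma}\) to conclude \(\initial_\Gamma(h)(\initial_v(f_1),\dots,\initial_v(f_k))=0\), whence \(\beta(\initial_\Gamma(h))=0\). The only cosmetic difference is that the paper packages the second step as a direct appeal to Lemma~\ref{lem:initial} (applied to \(\initial_\Gamma(h)\) itself, after noting \(v(\alpha(\initial_\Gamma(h)))\succ\gamma\)), whereas you unfold that lemma's argument inline.
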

\begin{proof}
  Given a \(\Delta\)\=/homogeneous polynomial \(h\in I\), the initial form \(\initial_\Gamma(h)\) is \(\Delta\)\=/homogeneous as well.
  Hence \(\initial_\Gamma(I)\) is a \(\Delta\)\=/homogeneous ideal because \(I\) is \(\Delta\)\=/homogeneous.
  As also \(J\) is \(\Delta\)\=/homogeneous, it suffices to check the claim on \(\Delta\)\=/homogeneous elements.
  The remainder of the proof is now analogous to \cite[Lemma~2.16]{KM19}.
  Let \(h\in I\) be \(\Delta\)\=/homogeneous and set \(\gamma\coloneqq\deg_\Gamma(\initial_\Gamma(h))\).
  We have \(h(f_1,\dots,f_k) = 0\) and hence \[\initial_\Gamma(h)(\initial_v(f_1),\dots,\initial_v(f_k)) = 0\] by Lemma~\ref{lem:initial}.
  By \(\Delta\)\=/homogeneity, we conclude \(\beta(\initial_\Gamma(h)) = 0\), so \(\initial_\Gamma(h)\in J\).
\end{proof}

The following observation is an analogue of \cite[Lemma~6.2.11]{Sch23} which originates from \cite{Yam18}.
\begin{lemma}
  \label{lem:inI}
  Let \(0\neq h\in K[X_1,\dots,X_k]\) be \(\Gamma\)- and \(\Delta\)\=/homogeneous.
  We have \(h\in \initial_\Gamma(I)\) if and only if there exists a \(\Delta\)\=/homogeneous polynomial \(\tilde h\in K[X_1,\dots, X_k]\) such that \(h - \tilde h\in I\) and \(\deg_\Gamma(h) \prec \deg_\Gamma(\initial_\Gamma(\tilde h))\).
\end{lemma}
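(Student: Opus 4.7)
The plan is to establish both implications directly, with the reverse implication being a short degree comparison and the forward implication requiring a careful decomposition into bi-homogeneous summands.

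For the reverse direction, I would start with a $\tilde h$ satisfying $h - \tilde h \in I$ and $\deg_\Gamma(h) \prec \deg_\Gamma(\initial_\Gamma(\tilde h))$. Since $h$ is $\Gamma$\=/homogeneous of some degree $\gamma := \deg_\Gamma(h)$ and every $\Gamma$\=/homogeneous component of $\tilde h$ has degree $\succeq \deg_\Gamma(\initial_\Gamma(\tilde h)) \succ \gamma$, the difference $h - \tilde h$ is non\-/zero with $h$ as its unique $\Gamma$\=/homogeneous component of degree $\gamma$; all remaining components have strictly larger $\Gamma$\=/degree. Hence $\initial_\Gamma(h - \tilde h) = h$, and thus $h \in \initial_\Gamma(I)$.

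For the forward direction, I would expand $h = \sum_j p_j\initial_\Gamma(q_j)$ with $q_j \in I$ and $p_j \in K[X_1,\dots,X_k]$. Since $I$ is $\Delta$\=/homogeneous (as the kernel of the $\Delta$\=/graded morphism $\alpha$), I can replace each $q_j$ by its $\Delta$\=/homogeneous components; exploiting the bi\-/homogeneity of $h$, I can further restrict to summands where $p_j$ is bi\-/homogeneous of complementary bi\-/degree, i.e.\ $\deg_\Gamma(p_j) + \deg_\Gamma(\initial_\Gamma(q_j)) = \deg_\Gamma(h)$ and $\deg_\Delta(p_j) + \deg_\Delta(q_j) = \deg_\Delta(h)$. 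I would then define $\tilde h := \sum_j p_j(\initial_\Gamma(q_j) - q_j)$, so that $h - \tilde h = \sum_j p_j q_j \in I$ and $\tilde h$ is $\Delta$\=/homogeneous of degree $\deg_\Delta(h)$. Since $q_j - \initial_\Gamma(q_j)$ consists precisely of those terms of $q_j$ whose $\Gamma$\=/degree is strictly greater than $\deg_\Gamma(\initial_\Gamma(q_j))$, every term of $p_j(\initial_\Gamma(q_j) - q_j)$ has $\Gamma$\=/degree strictly greater than $\deg_\Gamma(h)$, giving $\deg_\Gamma(\initial_\Gamma(\tilde h)) \succ \deg_\Gamma(h)$ as required.

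The main obstacle I expect is the bookkeeping needed to pass to a simultaneously $\Gamma$- and $\Delta$\=/bihomogeneous representation of $h$; a minor additional point is the degenerate case $\tilde h = 0$, corresponding to $h \in I$ being already $\Gamma$\=/homogeneous, which forces the convention $\deg_\Gamma(\initial_\Gamma(0)) = +\infty$ for the inequality in the statement to hold literally.
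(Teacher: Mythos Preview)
Your proof is correct and follows essentially the same approach as the paper: both directions are argued identically in spirit, with the forward direction constructing $\tilde h$ as the ``higher $\Gamma$-degree tail'' left over after subtracting a suitable element of $I$. The only cosmetic difference is that the paper absorbs your coefficients $p_j$ into the ideal elements (using that $p_j\initial_\Gamma(q_j)=\initial_\Gamma(p_jq_j)$ once $p_j$ is $\Gamma$-homogeneous) and writes $h=\sum_i\initial_\Gamma(h_i)$ directly, then sets $\tilde h\coloneqq h-\sum_ih_i$; your observation about the degenerate case $\tilde h=0$ is a nice addition that the paper leaves implicit.
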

\begin{proof}
  Assume \(h\in \initial_\Gamma(I)\), so we can write \(h = \sum_{i = 1}^t\initial_\Gamma(h_i)\) with \(h_i\in I\).
  We may assume that the \(h_i\) are \(\Delta\)\=/homogeneous of same degree since \(h\) is \(\Delta\)\=/homogeneous and \(I\) is a \(\Delta\)\=/homogeneous ideal.
  Set \(h'\coloneqq\sum_{i = 1}^th_i\in I\).
  Then \(\initial_\Gamma(h') = h\) by \(\deg_\Gamma\)\=/homogeneity of \(h\).
  Hence \(\tilde h \coloneqq h - h'\) fulfils the requirements.

  Conversely, assume that we have a polynomial \(\tilde h\in K[X_1,\dots, X_k]\) as in the claim.
  Then it follows that \(\initial_\Gamma(h - \tilde h) = h\), so \(h\in \initial_\Gamma(I)\).
\end{proof}

\begin{lemma}
  \label{lem:inI2}
  Let \(h\in J\) be \(\Gamma\)- and \(\Delta\)\=/homogeneous.
  If Algorithm~\ref{alg:homsubduc} terminates with remainder 0 for \(h(f_1,\dots,f_k)\) and \(\{f_1,\dots,f_k\}\), then \(h\in \initial_\Gamma(I)\).
\end{lemma}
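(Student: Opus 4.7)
The plan is to produce from the subduction algorithm an explicit witness $\tilde h$ satisfying the hypotheses of Lemma~\ref{lem:inI}, whose existence then certifies $h\in\initial_\Gamma(I)$.

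Before invoking it I would dispose of two degenerate cases: if $h=0$ the claim is vacuous, and if $\alpha(h)=0$, so that $h\in I$, then $\Gamma$\=/homogeneity of $h$ gives $\initial_\Gamma(h)=h$, hence $h\in\initial_\Gamma(I)$ directly. From now on assume $\alpha(h)\neq 0$.

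The next step, and really the only one requiring a short argument, is to extract from $h\in J$ the strict inequality $\deg_\Gamma(h)\prec v(\alpha(h))$. Writing $\delta\coloneqq\deg_\Delta(h)$ and expanding term by term, $\Delta$\=/homogeneity of $h$ yields
\[\beta(h) = h(\initial_v(f_1),\dots,\initial_v(f_k))\otimes\delta\]
in $\gr_v(B)\otimes_K K\Delta$. Since $\delta$ is a basis element of the group ring, $\beta(h)=0$ forces $h(\initial_v(f_1),\dots,\initial_v(f_k))=0$. By $\Gamma$\=/homogeneity, $\initial_\Gamma(h)=h$, so Lemma~\ref{lem:initial} yields $\deg_\Gamma(h)\preceq v(\alpha(h))$ and rules out equality, since equality would force $h(\initial_v(f_1),\dots,\initial_v(f_k))=\initial_v(\alpha(h))\neq 0$, contradicting the above. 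Hence the inequality is strict.

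Finally I would apply Algorithm~\ref{alg:homsubduc} to the $\Delta$\=/homogeneous input $\alpha(h)=h(f_1,\dots,f_k)$ with generating set $\{f_1,\dots,f_k\}$; by hypothesis it terminates with remainder $0$, yielding a polynomial $\tilde h\in K[X_1,\dots,X_k]$ with $\alpha(\tilde h)=\alpha(h)$ and therefore $h-\tilde h\in I$. Equipping $K[X_1,\dots,X_k]$ with $\deg_\Delta(X_i)\coloneqq\deg_\Delta(f_i)$ and $\deg_\Gamma(X_i)\coloneqq v(f_i)$, conditions~\ref{alg:homsubduc:2} and~\ref{alg:homsubduc:4} make $\tilde h$ itself $\Delta$\=/homogeneous, while the $\Gamma$\=/part of~\ref{alg:homsubduc:2} (using that scalars have zero valuation, so $v(t(f_1,\dots,f_k))=\deg_\Gamma(t)$) gives $\deg_\Gamma(t)\succeq v(\alpha(h))$ for every $t\in\Supp(\tilde h)$; hence $\deg_\Gamma(\initial_\Gamma(\tilde h))\succeq v(\alpha(h))\succ\deg_\Gamma(h)$, and Lemma~\ref{lem:inI} delivers $h\in\initial_\Gamma(I)$. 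The bridge $\beta(h)=0\Longrightarrow\deg_\Gamma(h)\prec v(\alpha(h))$ is the only place where $\Delta$\- and $\Gamma$\=/information interact, so it is the step to watch; everything else is a direct reading of the subduction output.
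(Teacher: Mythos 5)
Your proof is correct and follows essentially the same route as the paper's: extract $\deg_\Gamma(h)\prec v(\alpha(h))$ from $\beta(h)=0$ via Lemma~\ref{lem:initial} together with $\Gamma$-homogeneity of $h$, then read off from the subduction output (using condition~\ref{alg:homsubduc:2}) a preimage $\tilde h$ of $\alpha(h)$ with $\deg_\Gamma(\initial_\Gamma(\tilde h))\succ\deg_\Gamma(h)$, the only difference being that you package the final step via Lemma~\ref{lem:inI} where the paper reproves its easy direction inline by noting $\initial_\Gamma(h-\tilde h)=h$. You are also slightly more careful than the paper in disposing of the degenerate case $\alpha(h)=0$, where $v(\alpha(h))$ is undefined and one must instead conclude directly from $h\in I$ and $\initial_\Gamma(h)=h$.
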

\begin{proof}
  Set \(f\coloneqq h(f_1,\dots,f_k)\).
  We have \(h(\initial_v(f_1),\dots,\initial_v(f_k)) = 0\), so \(\deg_\Gamma(h) \prec v(f)\).
  By assumption, Algorithm~\ref{alg:homsubduc} produces an element \(h'\in K[X_1,\dots,X_k]\) with \(f = h'(f_1,\dots,f_k)\).
  Further, by \ref{alg:homsubduc:2} in Algorithm~\ref{alg:homsubduc}, we have \(\deg_\Gamma(\initial_\Gamma(h')) \succeq v(f)\), from which we conclude \(\deg_\Gamma(\initial_\Gamma(h')) = v(f)\) by Lemma~\ref{lem:initial}.
  Hence \(h - h'\in I\) and \(\initial_\Gamma(h - h') = \initial_\Gamma(h) = h\), so \(h\in\initial_\Gamma(I)\).
\end{proof}

The following theorem is the equivalent of \cite[Theorem~2.17]{KM19}.
\begin{theorem}
  \label{thm:surjeqeq}
  Assume that Algorithm~\ref{alg:homsubduc} terminates for \(A\).
  Let \(f_1,\dots,f_k\in A\) be \(\Delta\)\=/homogeneous generators.
  The following conditions are equivalent:
  \begin{enumerate}[(i)]
    \item The morphism \(\beta\) is surjective, that is, \(\{f_1,\dots, f_k\}\) is a \(\Delta\)\=/homogeneous Khovanskii basis of \(A\).
    \item The ideals \(\initial_\Gamma(I)\) and \(J\) coincide.
    \item For a set of generators \(\{h_1,\dots,h_s\}\) of \(J\) consisting of \(\Gamma\)\=/homogeneous as well as \(\Delta\)\=/homogeneous polynomials, Algorithm~\ref{alg:homsubduc} terminates with remainder 0 for \(h_i(f_1,\dots,f_k)\) and \(\{f_1,\dots,f_k\}\), \(1\leq i\leq s\).
  \end{enumerate}
\end{theorem}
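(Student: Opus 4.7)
The plan is to prove the implications cyclically as (i) \(\Rightarrow\) (iii) \(\Rightarrow\) (ii) \(\Rightarrow\) (i), paralleling the strategy of \cite[Theorem~2.17]{KM19} while invoking Lemmas~\ref{lem:inIsubJw}, \ref{lem:inI}, and~\ref{lem:inI2} for the present graded setting. The first two implications are essentially mechanical consequences of the lemmas together with the termination hypothesis; the last is the substantive one.

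For (i) \(\Rightarrow\) (iii), I would run Algorithm~\ref{alg:homsubduc} on each \(h_i(f_1,\dots,f_k)\) with input \(\{f_1,\dots,f_k\}\). Termination is guaranteed by hypothesis, while surjectivity of \(\beta\) forces \(\initial_v^\Delta(r) \in \im(\beta) = \initial_v^\Delta(A)\) for every nonzero \(r \in A\), so the \texttt{while}\=/loop cannot exit via condition \ref{alg:homsubduc:5} with \(r \neq 0\); hence \(r = 0\). For (iii) \(\Rightarrow\) (ii), Lemma~\ref{lem:inIsubJw} gives \(\initial_\Gamma(I) \subseteq J\) immediately, and Lemma~\ref{lem:inI2} applied to each \(h_i\) (which is \(\Gamma\)- and \(\Delta\)\=/homogeneous, lies in \(J\), and subducts to \(0\) by (iii)) yields \(h_i \in \initial_\Gamma(I)\); since the \(h_i\) generate \(J\) as an ideal, the reverse inclusion \(J \subseteq \initial_\Gamma(I)\) follows.

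For (ii) \(\Rightarrow\) (i), take a \(\Delta\)\=/homogeneous \(a \in A\) and run Algorithm~\ref{alg:homsubduc} on \(a\) with input \(\{f_1,\dots,f_k\}\), obtaining an output with polynomial \(h\) and remainder \(r\). If \(r = 0\), then condition \ref{alg:homsubduc:2} together with Lemma~\ref{lem:initial} yields \(\deg_\Gamma(\initial_\Gamma(h)) = v(a)\) and thus \(\initial_v^\Delta(a) = \beta(\initial_\Gamma(h)) \in \im(\beta)\). To exclude \(r \neq 0\), write \(r = q(f_1,\dots,f_k)\) for some \(\Delta\)\=/homogeneous \(q\). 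Lemma~\ref{lem:initial} gives \(\deg_\Gamma(\initial_\Gamma(q)) \preceq v(r)\); the equality case would yield \(\initial_v^\Delta(r) = \beta(\initial_\Gamma(q)) \in \im(\beta)\), contradicting condition \ref{alg:homsubduc:5}. In the strict case \(\initial_\Gamma(q) \in J = \initial_\Gamma(I)\), and Lemma~\ref{lem:inI} supplies a \(\Delta\)\=/homogeneous \(\tilde q\) with \(\initial_\Gamma(q) - \tilde q \in I\) and \(\deg_\Gamma(\initial_\Gamma(q)) \prec \deg_\Gamma(\initial_\Gamma(\tilde q))\); the replacement \(q' \coloneqq q - \initial_\Gamma(q) + \tilde q\) still satisfies \(q'(f_1,\dots,f_k) = r\) while strictly increasing \(\deg_\Gamma(\initial_\Gamma(\cdot))\). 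Iterating must eventually reach equality, contradicting \ref{alg:homsubduc:5}.

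The hard part is to justify that this iterative refinement in (ii) \(\Rightarrow\) (i) actually terminates: the chain of \(\Gamma\)\=/degrees is strictly increasing and bounded above by \(v(r)\), yet \(\Gamma\) is not assumed well\-/founded, so no a priori argument in \(\Gamma\) suffices. My approach would be to route the iteration through the termination hypothesis for Algorithm~\ref{alg:homsubduc} itself, applied not only to \(a\) but also to the auxiliary elements \(\initial_\Gamma(q^{(i)})(f_1,\dots,f_k) \in A\) whose valuations strictly exceed \(\deg_\Gamma(\initial_\Gamma(q^{(i)}))\). Subducting these and folding the resulting identities back into the update \(q^{(i)} \to q^{(i+1)}\) interleaves the polynomial refinement with genuine subduction steps in \(A\), so that global termination of the subduction algorithm (in its full strength as a hypothesis for \emph{every} input and \emph{every} set \(\mathcal B\)) controls the chain of \(\Gamma\)\=/degrees and forces it to stabilize at \(v(r)\).
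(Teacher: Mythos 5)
Your cyclic route (i) \(\Rightarrow\) (iii) \(\Rightarrow\) (ii) \(\Rightarrow\) (i) is sound in structure, and the content of each implication essentially matches the paper's (which, somewhat redundantly, proves (i) \(\Rightarrow\) (ii), (ii) \(\Rightarrow\) (i), (i) \(\Rightarrow\) (iii) and (iii) \(\Rightarrow\) (ii) separately). Your (i) \(\Rightarrow\) (iii) and (iii) \(\Rightarrow\) (ii) coincide with the paper. For (ii) \(\Rightarrow\) (i) you first subduct \(a\) and then run the \(\initial_\Gamma(I) = J\) refinement on the remainder \(r\), whereas the paper runs the same refinement directly on a preimage \(h'\in\alpha^{-1}(f)\); this is a cosmetic reordering using the same two lemmas.

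The termination concern you raise for the refinement step is legitimate, and it is worth knowing that the paper's own proof handles it no more carefully than you do: it passes over the issue with \enquote{Increasing the \(\Gamma\)-degree \dots\ we eventually obtain a preimage \(h\)} and does not visibly invoke the standing hypothesis that Algorithm~\ref{alg:homsubduc} terminates for \(A\) at that point. Your instinct that this hypothesis must control the increasing chain is plausible, but the remedy you sketch is not carried out and is not obviously adequate as described: subducting the auxiliary elements \(\alpha(\initial_\Gamma(q^{(i)}))\) against \(\{f_1,\dots,f_k\}\) may itself terminate with a \emph{nonzero} remainder whose \(\initial_v^\Delta\) lies outside \(\im(\beta)\), in which case you do not obtain the identity needed to pass from \(q^{(i)}\) to \(q^{(i+1)}\), and the interleaving does not collapse to a single terminating run of Algorithm~\ref{alg:homsubduc}. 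To finish this implication you would need a concrete mechanism linking the strictly increasing chain \(\deg_\Gamma(\initial_\Gamma(q^{(i)}))\) to the termination of Algorithm~\ref{alg:homsubduc} on a specific input \(f\) and set \(\mathcal B\); the proposal as written does not supply one, so this step remains a sketch.
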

\begin{proof}
  `(i)\(\Longrightarrow\)(ii)': Let \(\beta\) be surjective.
  Then Algorithm~\ref{alg:homsubduc} terminates with remainder 0 for every \(f\in A\).
  So, \(\initial_\Gamma(I) = J\) by Lemma~\ref{lem:inIsubJw} and Lemma~\ref{lem:inI2}.

  `(ii)\(\Longrightarrow\)(i)': Assume \(\initial_\Gamma(I) = J\) and let \(0\neq f\in A\) be \(\Delta\)\=/homogeneous.
  We have to show that there is \(h\in K[X_1,\dots,X_k]\) with \(\beta(h) = \initial_v^\Delta(f)\).
  Let \(h'\in K[X_1,\dots,X_k]\) be \(\Delta\)\=/homogeneous with \(h'(f_1,\dots, f_k) = f\).
  If \(\deg_\Gamma(\initial_\Gamma(h')) = v(f)\), then \(\initial_\Gamma(h')(\initial_v(f_1),\dots,\initial_v(f_k)) = \initial_v(f)\) by Lemma~\ref{lem:initial}, so \(\initial_\Gamma(h')\) is the desired preimage of \(\initial_v^\Delta(f)\) under \(\beta\).
  Assume hence \(\deg_\Gamma(\initial_\Gamma(h')) \prec v(f)\), so \(\initial_\Gamma(h')(\initial_v(f_1),\dots,\initial_v(f_k)) = 0\) by Lemma~\ref{lem:initial} and \(\initial_\Gamma(h') \in J\).
  By assumption, \(\initial_\Gamma(h')\in \initial_\Gamma(I)\), so there is \(\tilde h\in K[X_1,\dots,X_k]\) with \(\initial_\Gamma(h') - \tilde h\in I\) and \(\deg_\Gamma(\initial_\Gamma(h')) \prec \deg_\Gamma(\initial_\Gamma(\tilde h))\) by Lemma~\ref{lem:inI}.
  Hence for \(h''\coloneqq h' - \initial_\Gamma(h') + \tilde h\) we have \(\alpha(h'') = f\) and \(\deg_\Gamma(\initial_\Gamma(h'')) \succ \deg_\Gamma(\initial_\Gamma(h'))\).
  Increasing the \(\Gamma\)\=/degree of the initial form of a preimage of \(f\) in this way, we eventually obtain a preimage \(h\in\alpha^{-1}(f)\) with \(\deg_\Gamma(\initial_\Gamma(h)) = v(f)\).
  Then \(\initial_\Gamma(h)\) gives the desired preimage under \(\beta\) as above.

  `(i)\(\Longrightarrow\)(iii)': By assumption, Algorithm~\ref{alg:homsubduc} terminates for \(A\).
  As \(\beta\) is surjective, the algorithm must return remainder 0 for every \(f\in A\).

  `(iii)\(\Longrightarrow\)(ii)': Let \(J = \langle h_1,\dots,h_s\rangle\) with \(\Delta\)\=/homogeneous and \(\Gamma\)\=/homogeneous polynomials \(h_i\) and assume Algorithm~\ref{alg:homsubduc} reduces \(h_i(f_1,\dots,f_k)\) to 0.
  This implies \(h_i\in\initial_\Gamma(I)\) by Lemma~\ref{lem:inI2}.
  Hence \(J\subseteq\initial_\Gamma(I)\) and the claim follows with Lemma~\ref{lem:inIsubJw}.
\end{proof}

We use Theorem~\ref{thm:surjeqeq} in Algorithm~\ref{alg:homkhov} to compute a homogeneous Khovanskii basis.

\begin{algorithm}
  \caption{Homogeneous Khovanskii basis}
  \label{alg:homkhov}
  \Input{\(\Delta\)\=/homogeneous generators \(f_1,\dots,f_k\in A\leq B\)}
  \Output{A \(\Delta\)\=/homogeneous Khovanskii basis of \(A\) with respect to \(v\)}
  \BlankLine
  Initialize \(\mathcal G \coloneqq \mathcal G' \coloneqq \{f_1,\dots, f_k\}\)\;
  \While{\(\mathcal G' \neq \emptyset\)}{
    \(\mathcal G' \coloneqq \emptyset\)\;
    With \(\mathcal G = \{f'_1,\dots, f'_l\}\), set up the morphism \[\beta:K[X_1,\dots,X_l]\to\initial_v^\Delta(A),\ X_i\mapsto \initial_v^\Delta(f'_i)\] and compute \(\Gamma\)\=/homogeneous as well as \(\Delta\)\=/homogeneous generators of the kernel \(\ker(\beta) = \langle h_1,\dots,h_s\rangle\)\;
    \For{\(h\in\{h_1,\dots,h_s\}\)}{
      \If{Algorithm~\ref{alg:homsubduc} terminates for \(h(f_1',\dots,f'_l)\) and \(\mathcal G\) with a non-zero remainder \(r\)}{
        \(\mathcal G' \coloneqq \mathcal G'\cup\{r\}\)\;
      }
    }
    \(\mathcal G \coloneqq \mathcal G\cup\mathcal G'\)\;
  }
  \Return \(\mathcal G\)\;
\end{algorithm}

\begin{proposition}
  \label{prop:homkhovcorrect}
  Assume that Algorithm~\ref{alg:homsubduc} terminates for \(A\).
  Algorithm~\ref{alg:homkhov} terminates after finitely many steps if and only if there exists a finite \(\Delta\)\=/homogeneous Khovanskii basis of \(A\) with respect to \(v\).
  If the algorithm terminates, it correctly returns such a basis.
\end{proposition}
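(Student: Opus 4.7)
My plan is to derive both correctness and the termination characterization from Theorem~\ref{thm:surjeqeq} together with property~(5) of Algorithm~\ref{alg:homsubduc}. For correctness (and the \emph{only if} direction of the termination characterization), I would observe that the \texttt{while}-loop of Algorithm~\ref{alg:homkhov} exits precisely when $\mathcal{B}' = \emptyset$, i.e.\ when Algorithm~\ref{alg:homsubduc} returns remainder $0$ for $h(f'_1,\dots,f'_l)$ for each $\Gamma$- and $\Delta$-homogeneous generator $h$ of $\ker(\beta)$ associated to the current $\mathcal{B}=\{f'_1,\dots,f'_l\}$. This is precisely condition~(iii) of Theorem~\ref{thm:surjeqeq}. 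Since Algorithm~\ref{alg:homsubduc} is assumed to terminate for $A$, the theorem applies and yields condition~(i), so the returned $\mathcal{B}$ is a $\Delta$-homogeneous Khovanskii basis; it is finite throughout by construction, so termination produces a finite Khovanskii basis.

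For the \emph{if} direction, assume a finite $\Delta$-homogeneous Khovanskii basis $\mathcal{B}_{\mathrm{fin}}=\{g_1,\dots,g_m\}$ exists, and argue by contradiction: suppose the algorithm does not terminate. Let $C_i \coloneqq K[\initial_v^\Delta(b)\mid b\in\mathcal{B}_i]\leq \initial_v^\Delta(A)$ denote the subalgebra generated by the initial forms of $\mathcal{B}_i$ at iteration $i$. By property~(5) of Algorithm~\ref{alg:homsubduc}, any new element $r$ added to $\mathcal{B}$ satisfies $\initial_v^\Delta(r)\notin C_i$, so non-termination produces a strictly ascending chain $C_0\subsetneq C_1\subsetneq\cdots$ inside $\initial_v^\Delta(A)=K[\initial_v^\Delta(g_1),\dots,\initial_v^\Delta(g_m)]$. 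It suffices to show $\bigcup_i C_i = \initial_v^\Delta(A)$: then each generator $\initial_v^\Delta(g_j)$ lies in some $C_{i_j}$, and for $i\geq\max_j i_j$ we obtain $C_i=\initial_v^\Delta(A)$, so $\mathcal{B}_i$ is a Khovanskii basis by Theorem~\ref{thm:surjeqeq}, forcing $\mathcal{B}'=\emptyset$ at the next iteration---a contradiction.

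Establishing $\bigcup_i C_i=\initial_v^\Delta(A)$ is the main obstacle, since finitely generated $K$-algebras admit infinite strictly ascending chains of subalgebras in general (e.g.\ $K[xy]\subsetneq K[xy,xy^2]\subsetneq\cdots$ in $K[x,y]$), so no general Noetherianity argument applies. My approach would be to fix $0\neq c\in\initial_v^\Delta(A)$, lift it to $f\in A$ with $\initial_v^\Delta(f)=c$, and express $f$ via the current $\mathcal{B}_i$ as $f=\alpha_i(h')$ for some $\Delta$-homogeneous $h'$. If $c\notin C_i$, Lemma~\ref{lem:initial} forces $\deg_\Gamma(\initial_\Gamma(h'))\prec v(f)$, so $\initial_\Gamma(h')$ is a $\Gamma$- and $\Delta$-homogeneous element of $\ker(\beta_i)$---precisely a syzygy of the kind the algorithm processes. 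Mimicking the degree-raising argument in the proof of `(ii)$\Longrightarrow$(i)' of Theorem~\ref{thm:surjeqeq} and combining subduction property~(3) with the standing termination hypothesis for Algorithm~\ref{alg:homsubduc}, successive replacements should strictly increase $\deg_\Gamma(\initial_\Gamma(h'))$ until it reaches $v(f)$, at which point Lemma~\ref{lem:initial} gives $c\in C_i$ at the corresponding iteration.
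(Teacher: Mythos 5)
Your forward direction and the overall contradiction framework are sound, and your reduction to the claim that each $\initial_v^\Delta(g_j)$ eventually lies in some $C_i$ is the right target; this matches the paper. The gap is in the final step, where you say you would \emph{mimic} the degree-raising argument from the proof of \textquote{(ii)\(\Longrightarrow\)(i)} of Theorem~\ref{thm:surjeqeq}. That argument raises \(\deg_\Gamma(\initial_\Gamma(h'))\) by applying Lemma~\ref{lem:inI}, which requires knowing \(\initial_\Gamma(h')\in\initial_\Gamma(I)\). In the theorem this membership is free because the hypothesis there is precisely \(\initial_\Gamma(I)=J\); here you only know \(\initial_\Gamma(h')\in J_i=\ker(\beta_i)\), and \(\initial_\Gamma(I_i)\subseteq J_i\) (Lemma~\ref{lem:inIsubJw}) may well be strict --- indeed, strictness is exactly what prevents the current \(\mathcal B_i\) from being a Khovanskii basis. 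So the step you want to transplant is exactly the one that is \emph{not} available without additional input.

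The paper's proof supplies the missing ingredient as follows: for each chosen generator \(h_j\) of \(J_i\), the algorithm subducts \(h_j(f_1',\dots,f_l')\) to a remainder \(r_j\), adds \(r_j\) as a new generator \(f_{l+j}'\), and this manufactures a relation \(h_j - (h_j' + X_{l+j})\in I^+\) (kernel over the \emph{enlarged} variable set) whose \(\initial_\Gamma\) equals \(h_j\) thanks to property~\ref{alg:homsubduc:2} of Algorithm~\ref{alg:homsubduc} and the \(\Gamma\)-homogeneity of \(h_j\). Hence every chosen generator, and therefore all of \(J_i\), lies in \(\initial_\Gamma(I^+)\); your \(\initial_\Gamma(h')\) now lands in \(\initial_\Gamma(I^+)\) because it is a \(K[X_1,\dots,X_l]\)-combination of the \(h_j\), and only then does Lemma~\ref{lem:inI} apply to raise the degree. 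This is not a cosmetic detail: without it, there is no mechanism for producing the higher-degree preimage \(\tilde h\). You should also note that your reliance on subduction property~\ref{alg:homsubduc:3} and the termination hypothesis is for the separate issue of why the sequence of raised degrees reaches \(v(f)\) in finitely many steps; that part of your plan is fine, but it does not substitute for the missing membership argument.
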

\begin{proof}
  Assume that Algorithm~\ref{alg:homkhov} terminates with output \(\{f_1',\dots, f_l'\}\).
  By construction, these polynomials are generators of \(A\) as the set contains the original generators.
  Further, the \(f'_i\) are \(\Delta\)\=/homogeneous as Algorithm~\ref{alg:homsubduc} maintains the homogeneity.
  Let \(\beta:K[X_1,\dots,X_l]\to\initial_v^\Delta(A)\) be the map corresponding to the \(f_i'\) and set \(J \coloneqq\ker(\beta)\).
  As no generators were added in the final iteration of the \texttt{while}\=/loop, there are generators \(h_1,\dots,h_s\) of \(J\) such that Algorithm~\ref{alg:homsubduc} returns remainder 0 for \(h_i(f_1',\dots,f_l')\) and \(\{f_1',\dots,f_l'\}\).
  Hence \(\{f_1',\dots,f_l'\}\) is a \(\Delta\)\=/homogeneous Khovanskii basis of \(A\) by Theorem~\ref{thm:surjeqeq}.
  In particular, \(A\) admits such a finite basis.

  Conversely, assume that \(A\) contains a finite \(\Delta\)\=/homogeneous Khovanskii basis \(\{\tilde f_1,\dots,\tilde f_s\}\).
  Let \(f_1',\dots,f_l'\in A\) be the polynomials in a given iteration of the algorithm and let \(\beta\) be the corresponding map.
  We assume at first that for all \(i\in\{1,\dots,s\}\) we have \(\initial_v^\Delta(\tilde f_i)\in\im(\beta)\).
  Then \(\beta\) must be surjective as \(\initial_v^\Delta(\tilde f_i)\) generate \(\initial_v^\Delta(A)\) by construction.
  Hence \(\{f'_1,\dots,f'_l\}\) is a \(\Delta\)\=/homogeneous Khovanskii basis as well.
  By Theorem~\ref{thm:surjeqeq}, the generators of \(\ker(\beta)\) chosen by the algorithm must reduce to 0 in Algorithm~\ref{alg:homsubduc} and the algorithm terminates.

  Assume now that there is some \(f\in\{\tilde f_1,\dots,\tilde f_s\}\) with \(\initial_v^\Delta(f)\notin\im(\beta)\).
  As \(f\in A\), there is a \(\Delta\)\=/homogeneous polynomial \(h\in K[X_1,\dots,X_l]\) with \(h(f_1',\dots,f_l') = f\).
  By assumption, \(\initial_\Gamma(h)(\initial_v(f_1'),\dots,\initial_v(f_l')) \neq \initial_v(f)\), so \(\initial_\Gamma(h)(\initial_v(f_1'),\dots,\initial_v(f_l')) = 0\) by Lemma~\ref{lem:initial}.
  Hence \(\initial_\Gamma(h)\in J\).
  Let \(h_1,\dots,h_s\in K[X_1,\dots,X_k]\) with \(J = \langle h_1,\dots,h_s\rangle\) be the generators chosen by the algorithm.
  For every \(i\in\{1,\dots,s\}\), the algorithm computes \(h'_i\in K[X_1,\dots,X_l]\) and \(r_i\in A\) with \[h_i(f_1',\dots,f_l') = h'_i(f_1',\dots,f_l') + r_i\] using Algorithm~\ref{alg:homsubduc} with input \(h_i(f'_1,\dots,f'_l)\) and \(\{f'_1,\dots,f'_l\}\).
  Let \(f_{l + i}' \coloneqq r_i\).
  We have \(h_i^+ \coloneqq h_i' + X_{l + i}\in K[X_1,\dots,X_{l + s}]\) with \(h_i^+(f_1',\dots,f_{l + s}') = h_i(f_1',\dots,f_l')\) in the next iteration of the algorithm.
  Then \(h_i - h_i^+\in I\), where \(I\) is the kernel of \[K[X_1,\dots,X_{l + s}]\to A,\ X_i\mapsto f_i'.\]
  Hence \(\initial_\Gamma(h_i - h_i^+) = \initial_\Gamma(h_i) = h_i\) by \ref{alg:homsubduc:2} in Algorithm~\ref{alg:homsubduc} and the \(\Gamma\)\=/homogeneity of \(h_i\).
  This implies \(h_i\in \initial_\Gamma(I)\).
  Recall that \(\initial_\Gamma(h)\in\langle h_1,\dots, h_s\rangle\), so we conclude \(\initial_\Gamma(h)\in \initial_\Gamma(I)\).
  By Lemma~\ref{lem:inI}, there is \(\tilde h\in K[X_1,\dots,X_{l + s}]\) with \(\initial_\Gamma(h) - \tilde h\in I\) and \(\deg_\Gamma(\initial_\Gamma(h)) \prec \deg_\Gamma(\initial_\Gamma(\tilde h))\).
  Then for \(h^+ \coloneqq h - \initial_\Gamma(h) + \tilde h\) we have \(h^+(f_1',\dots,f_l') = f\) and \(\deg_\Gamma(\initial_\Gamma(h)) \prec \deg_\Gamma(\initial_\Gamma(h^+))\).
  Increasing the \(\Gamma\)\=/degree of a preimage of \(f\) in this way, we find a preimage \(h\) of degree \(\deg_\Gamma(\initial_\Gamma(h)) = v(f)\) after finitely many iterations.
  Equivalently, we achieve \(\initial_\Gamma^\Delta(f)\in\im(\beta)\) (with an extended map \(\beta\)) after finitely many iterations.
\end{proof}

\begin{example}
  \label{ex:khovrun3}
  This example is part of the series also containing \ref{ex:khovrun1} and \ref{ex:khovrun2}.
  Let \(\alpha\) and \(\beta\) be the maps corresponding to \(\{f_1,f_2\}\), that is, \(\alpha(X_i) = f_i\) and \(\beta(X_i) = \initial_v^\Delta(f_i)\).
  First of all, we have \(I = \ker(\alpha) = \{0\}\), so \(\initial_\Gamma(I) = \{0\}\), but \(J = \ker(\beta) = \langle X_1^2 - X_2^2\rangle\).
  By Theorem~\ref{thm:surjeqeq}, this confirms our observation that \(\{f_1,f_2\}\) is not a \(\Delta\)\=/homogeneous Khovanskii basis.

  We run Algorithm~\ref{alg:homkhov} with \(\mathcal G = \{f_1,f_2\}\).
  We have \(J = \langle X_1^2 - X_2^2\rangle\), so we must run Algorithm~\ref{alg:homsubduc} with \(f_3 \coloneqq f_1^2 - f_2^2 = 4xy\) and \(\{f_1,f_2\}\).
  Because \(\initial_v^\Delta(f_3) = 4xy\otimes\bar 0\), the algorithm immediately terminates with remainder \(f_3\).

  So we set \(\mathcal G = \{f_1,f_2,f_3\}\).
  Computing the relations of the initial forms \(\initial_v^\Delta(f_i)\) yields the same ideal \(J\) as before, so Algorithm~\ref{alg:homkhov} terminates with the set of generators \(\mathcal G\).

  We confirm the result using Theorem~\ref{thm:surjeqeq}.
  For the relations of the polynomials \(f_i\), we obtain the ideal \(I = \langle X_1^2 - X_2^2 - X_3\rangle\).
  The grading by \(\Gamma = \ZZ\) on \(K[X_1,X_2,X_3]\) is given by \(\deg_\Gamma(X_i) = v(f_i)\), so \(\deg_\Gamma(X_1) = 0\), \(\deg_\Gamma(X_2) = 0\) and \(\deg_\Gamma(X_3) = 1\).
  Hence we have \(\initial_\Gamma(I) = \langle X_1^2 - X_2^2\rangle\) and this coincides with \(J\) as required.
\end{example}

\section{A generalization to multiple valuations}
\label{sec:muvak}

\subsection{MUVAK bases}
We now consider multiple valuations \(v_1,\dots,v_m:B\setminus\{0\}\to \Gamma\) over \(K\) simultaneously.

\begin{definition}[Faithful representability]
  Let \(f_1,\dots, f_k\in A\) be \(\Delta\)\=/homogeneous generators and let \(f\in A\) be a \(\Delta\)\=/homogeneous element.
  We say that \(f\) is \emph{faithfully representable} in \(f_1,\dots, f_k\) with respect to \(v_1,\dots,v_m\), if there is a polynomial \(h\in K[X_1,\dots,X_k]\) such that \(h(f_1,\dots,f_k) = f\) and for all \(t\in\Supp(h)\) and for all \(i\in \{1,\dots,m\}\), we have \(v_i(t(f_1,\dots,f_k)) \succeq v_i(f)\).
\end{definition}

\begin{definition}[Homogeneous MUVAK bases]
  Let \(\mathcal G \subseteq A\) be a set of \(\Delta\)\=/homogeneous generators of \(A\) as a \(K\)\=/algebra.
  We say that \(\mathcal G\) is a \emph{\(\Delta\)\=/homogeneous MUVAK basis}\footnote{\textbf{M{\kern -.5pt}u}ltiple \textbf{V}{\kern-1pt}aluations \textbf{A}nalogue to \textbf{K}hovanskii bases} of \(A\) with respect to \(v_1,\dots,v_m\), if for every \(\Delta\)\=/homogeneous \(f\in A\) there are \(f_1,\dots, f_k\in\mathcal G\), such that \(f\) is faithfully representable in \(f_1,\dots,f_k\) with respect to \(v_1,\dots,v_m\).
\end{definition}

While we take full responsibility for inventing the name, \(\Delta\)\=/homogeneous MUVAK bases already appeared in the literature in the construction of certain Cox rings.
We give more details on this application in Section~\ref{sec:cox}.

\begin{example}
  \label{ex:muvakrun1}
  We again illustrate the results of this section with a running example, which is continued in \ref{ex:muvakrun2}, \ref{ex:muvakrun3} and \ref{ex:muvakrun4}.
  Let \(K\) be a field, \(B = K[x, y]\) and \(v_x\) and \(v_y\) be the order of divisibility by \(x\) and \(y\), respectively, with the natural ordering on \(\ZZ\).
  Let \[f_1 \coloneqq x^2 + xy,f_2 \coloneqq x^2, f_3 \coloneqq y^2 + xy, f_4 \coloneqq y^2\in B\] and set \(A = K[f_1,f_2, f_3, f_4]\), so \(A\) is the subalgebra generated by all monomials of even degree.
  For simplicity, we do not consider a grading in this example and put \(\Delta = \{0\}\) (although everything works with the standard grading by \(\Delta = \ZZ\)).
  We have the following valuations and initial forms:
  \[\def\arraystretch{1.2}\begin{array}{c|c|c}
    & v_x & v_y \\
    \hline
    f_1 & 1 & 0 \\
    f_2 & 2 & 0 \\
    f_3 & 0 & 1 \\
    f_4 & 0 & 2
  \end{array}\quad\quad\quad
  \begin{array}{c|c|c}
    & \initial_{v_x} & \initial_{v_y} \\
    \hline
    f_1 & xy & x^2 \\
    f_2 & x^2 & x^2 \\
    f_3 & y^2 & xy \\
    f_4 & y^2 & y^2
  \end{array}\]
  We claim that \(\{f_1,\dots,f_4\}\) is not a MUVAK basis of \(A\) with respect to \(v_x,v_y\).
  Consider \(f = f_1 - f_2 = xy\in A\).
  For degree reasons, any polynomial \(h\in K[X_1,\dots, X_4]\) with \(h(f_1,f_2,f_3,f_4) = xy\) must be of the form \(h = a_1X_1 + a_2X_2 + a_3X_3 + a_4X_4\) with \(a_i\in K\).
  Furthermore, we must have \(a_1 \neq 0\) or \(a_3\neq 0\) because only the polynomials \(f_1\) and \(f_3\) involve the monomial \(xy\).
  If \(a_1X_1\in \Supp(h)\), then \(v_y(a_1f_1) < v_y(xy) = 1\), and if \(a_3X_3\in\Supp(h)\), then \(v_x(a_3f_3) < v_x(xy) = 1\).
  Hence \(f\) is not faithfully representable in \(f_1,\dots, f_4\) with respect to \(v_x\) and \(v_y\).
\end{example}

Before we present an algorithm for the computation of MUVAK bases, we relate MUVAK bases to Khovanskii bases, see Theorem~\ref{thm:muvakkhov} for the precise statement.
For this, as in the previous section, we let \(f_1,\dots,f_k\in A\) be \(\Delta\)\=/homogeneous generators of \(A\) as a \(K\)\=/algebra and consider the morphism \[\alpha:K[X_1,\dots, X_k]\to A,\ X_i\mapsto f_i.\]
We endow the polynomial ring \(K[X_1,\dots,X_k]\) with a grading by \(\Delta\) via \(\deg_\Delta(X_i) \coloneqq \deg_\Delta(f_i)\).
We also have \(m\) morphisms \[\beta_i:K[X_1,\dots,X_k]\to\initial_{v_i}^\Delta(A),\ X_j\mapsto\initial_{v_i}^\Delta(f_j)\] and this gives \(\Gamma\)\=/gradings via \(\deg_i(X_j) \coloneqq v_i(f_j)\) for all \(i\in \{1,\dots,m\}\).
The morphisms \(\alpha\) and \(\beta_i\) are \(\Delta\)\=/graded by construction and \(\beta_i\) is a \(\Gamma\)\=/graded morphism via \(\deg_i\).

\begin{notation}
  We write \(\initial_i(h)\in K[X_1,\dots,X_k]\) for the sum of the terms of minimal \(\deg_i\)\=/degree of a polynomial \(h\in K[X_1,\dots, X_k]\) (with respect to \(\succeq\)).
\end{notation}

We have the following equivalent characterization of a MUVAK basis using the maps \(\alpha\) and \(\beta_i\).
\begin{lemma}
  \label{lem:muvakbeta}
  The set \(\{f_1,\dots,f_k\}\) is a \(\Delta\)\=/homogeneous MUVAK basis of \(A\) with respect to \(v_1,\dots,v_m\) if and only if for every \(\Delta\)\=/homogeneous \(f\in A\) there is a \(\Delta\)\=/homogeneous \(h\in K[X_1,\dots,X_k]\) with \(\alpha(h) = f\) and \(\beta_i(\initial_i(h)) = \initial_{v_i}^\Delta(f)\) for all \(i\in\{1,\dots,m\}\).
\end{lemma}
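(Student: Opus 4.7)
The plan is to prove both implications by translating the MUVAK valuation bound on the terms of $h$ into a statement about $\initial_i(h)$, using the key identity
\[
v_i(t(f_1,\dots,f_k)) = \deg_i(t)
\]
for any monomial term $t = cX_1^{a_1}\cdots X_k^{a_k}$; this follows from multiplicativity of $v_i$ (axiom (ii)) and the definition $\deg_i(X_j) = v_i(f_j)$.

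For the implication (\(\Rightarrow\)), I would start from any $h$ realising the MUVAK condition for $f$ and replace it by its $\Delta$-homogeneous component of degree $\deg_\Delta(f)$: this is still a preimage of $f$ under the $\Delta$-graded map $\alpha$, and being a subsum of the terms of the original polynomial it still fulfils the valuation bound. The bound rewrites as $\deg_i(t) \succeq v_i(f)$ for all $t \in \Supp(h)$, giving $\deg_i(\initial_i(h)) \succeq v_i(f)$. The reverse inequality follows from axiom~(i): $v_i(f) = v_i(\alpha(h)) \succeq \min_{t \in \Supp(h)} v_i(t(f_1,\dots,f_k)) = \deg_i(\initial_i(h))$. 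Hence $\deg_i(\initial_i(h)) = v_i(f)$. Now expand $\beta_i$ on the individual monomials of $\initial_i(h)$ using multiplicativity of $\initial_{v_i}$ on $\gr_{v_i}(B)$ and use the common $\Delta$-degree to pull out a single factor $\deg_\Delta(f) \in K\Delta$, obtaining $\beta_i(\initial_i(h)) = \initial_{v_i}(\alpha(\initial_i(h))) \otimes \deg_\Delta(f)$. Because every term of $h - \initial_i(h)$ has $\deg_i$-degree strictly above $v_i(f)$, the difference $\alpha(h) - \alpha(\initial_i(h)) = \alpha(h - \initial_i(h))$ has $v_i$-valuation strictly above $v_i(f)$; therefore $\initial_{v_i}(\alpha(\initial_i(h))) = \initial_{v_i}(f)$, which gives $\beta_i(\initial_i(h)) = \initial_{v_i}^\Delta(f)$.

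For the reverse implication (\(\Leftarrow\)), assume $h$ is $\Delta$-homogeneous with $\alpha(h) = f$ and $\beta_i(\initial_i(h)) = \initial_{v_i}^\Delta(f)$ for every $i$. The case $f = 0$ is handled by taking $h = 0$; otherwise $\initial_{v_i}^\Delta(f)$ is nonzero and lies in $\deg_{v_i}$-degree $v_i(f)$, while $\beta_i(\initial_i(h))$ lies in $\deg_{v_i}$-degree $\deg_i(\initial_i(h))$ since $\beta_i$ is $\Gamma$-graded. Equality of these two elements forces $\deg_i(\initial_i(h)) = v_i(f)$, and since every $t \in \Supp(h)$ satisfies $\deg_i(t) \succeq \deg_i(\initial_i(h))$ by definition of the initial form, we conclude $v_i(t(f_1,\dots,f_k)) = \deg_i(t) \succeq v_i(f)$ for every $i$, which is precisely the MUVAK condition.

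The main bookkeeping difficulty will be tracking the two gradings on $\gr_{v_i}(B) \otimes_K K\Delta$ simultaneously and carefully verifying the multiplicativity computation for $\beta_i$ on a monomial together with its identification with $\initial_{v_i}^\Delta$ of the image; beyond that, the argument is a direct dictionary between the valuation bound on terms and the initial-form equation, so no deeper obstacle is expected.
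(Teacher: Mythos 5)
Your proof is correct and follows essentially the same route as the paper's: in each direction you reduce the faithful-representability condition to the degree equation $\deg_i(\initial_i(h)) = v_i(f)$ and then invoke $\Gamma$-gradedness of $\beta_i$. You spell out a few steps the paper leaves implicit — the replacement of $h$ by its $\Delta$-homogeneous component, the $f=0$ case, and an inline re-derivation of the identity $\beta_i(\initial_i(h)) = \initial_{v_i}^\Delta(f)$ that the paper dispatches with a terse appeal to \enquote{degree reasons} (and which really rests on Lemma~\ref{lem:initial}) — but these are expository elaborations, not a different argument.
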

\begin{proof}
  Let \(f\in A\) be \(\Delta\)\=/homogeneous.
  If \(\{f_1,\dots,f_k\}\) is a \(\Delta\)\=/homogeneous MUVAK basis with respect to \(v_1,\dots,v_m\), then \(f\) must be faithfully representable in \(f_1,\dots,f_k\), so there is \(h\in K[X_1,\dots, X_k]\) with \(\alpha(h) = f\) and \(\deg_i(\initial_i(h)) = v_i(f)\) for all \(i \in \{1,\dots,m\}\).
  The maps \(\beta_i\) are \(\deg_i\)\=/graded, so we must have \(\beta_i(\initial_i(h)) = \initial_{v_i}^\Delta(f)\) for degree reasons.

  Conversely, assume that there is a \(\Delta\)\=/homogeneous \(h\in K[X_1,\dots, X_k]\) with \(\alpha(h) = f\) and \(\beta_i(\initial_i(h)) = \initial_{v_i}^\Delta(f)\) for all \(i\in\{1,\dots,m\}\).
  Then \(\deg_i(\initial_i(h)) = \deg_{v_i}(\initial_{v_i}(f)) = v_i(f)\) as required.
\end{proof}

\begin{theorem}
  \label{thm:muvakkhov}
  Let \(v:B\setminus\{0\}\to \Gamma\) be a valuation over \(K\) and let \(f_1,\dots,f_k\in A\) be \(\Delta\)\=/homogeneous generators.
  If \(\{f_1,\dots,f_k\}\) is a \(\Delta\)\=/homogeneous MUVAK basis of \(A\) with respect to \(v\), then \(\{f_1,\dots,f_k\}\) is a \(\Delta\)\=/homogeneous Khovanskii basis of \(A\) with respect to \(v\).

  Conversely, if \(\{f_1,\dots,f_k\}\) is a \(\Delta\)\=/homogeneous Khovanskii basis of \(A\) with respect to \(v\) and Algorithm~\ref{alg:homsubduc} terminates for \(A\), then \(\{f_1,\dots,f_k\}\) is a \(\Delta\)\=/homogeneous MUVAK basis of \(A\) with respect to \(v\).
\end{theorem}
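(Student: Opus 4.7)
The plan is to handle the two implications of Theorem~\ref{thm:muvakkhov} separately, with the forward direction following from Lemma~\ref{lem:muvakbeta} and the converse from the homogeneous subduction algorithm.

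For the forward direction, suppose \(\{f_1,\dots,f_k\}\) is a \(\Delta\)\=/homogeneous MUVAK basis of \(A\) with respect to the single valuation \(v\). I want to show the morphism \(\beta\) from Section~\ref{sec:homkhov} is surjective. Given any \(\Delta\)\=/homogeneous \(f\in A\), Lemma~\ref{lem:muvakbeta} with \(m=1\) produces a \(\Delta\)\=/homogeneous \(h\in K[X_1,\dots,X_k]\) with \(\beta(\initial_\Gamma(h))=\initial_v^\Delta(f)\), identifying \(\initial_1\) with \(\initial_\Gamma\) and \(\beta_1\) with \(\beta\). Since \(\initial_v^\Delta(A)\) is generated as a \(K\)\=/algebra by such initial forms \(\initial_v^\Delta(f)\) of \(\Delta\)\=/homogeneous elements, this yields surjectivity of \(\beta\), hence the Khovanskii property.

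For the converse, suppose \(\{f_1,\dots,f_k\}\) is a \(\Delta\)\=/homogeneous Khovanskii basis and Algorithm~\ref{alg:homsubduc} terminates for \(A\). Given \(\Delta\)\=/homogeneous \(f\in A\), I would run the subduction algorithm on input \(f\) and \(\mathcal{B}=\{f_1,\dots,f_k\}\); by hypothesis it terminates and returns a polynomial \(h\) (in variables corresponding to some subset of \(\mathcal{B}\), which we extend trivially to all of \(\mathcal{B}\)) together with a remainder \(r\). The Khovanskii property means \(\initial_v^\Delta(A)=\initial_v^\Delta(\mathcal{B})\), so if \(r\neq 0\), then \(\initial_v^\Delta(r)\in\initial_v^\Delta(A)=\initial_v^\Delta(\mathcal{B})\), contradicting condition~\ref{alg:homsubduc:5}. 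Hence \(r=0\); conditions~\ref{alg:homsubduc:1} and \ref{alg:homsubduc:2} then give \(f=h(f_1,\dots,f_k)\) with \(v(t(f_1,\dots,f_k))\succeq v(f)\) for every \(t\in\Supp(h)\), which is exactly faithful representability of \(f\) for the single valuation \(v\).

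I do not anticipate any real obstacle: both directions are short reductions to results already established in the paper. The main point to watch is that, in the converse direction, the termination hypothesis together with the Khovanskii assumption rules out a nonzero remainder, exactly mirroring step `(i)\(\Longrightarrow\)(iii)' in the proof of Theorem~\ref{thm:surjeqeq}.
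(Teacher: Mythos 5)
Your proof is correct and follows essentially the same route as the paper: the forward direction is a direct application of Lemma~\ref{lem:muvakbeta} with \(m=1\), and the converse runs Algorithm~\ref{alg:homsubduc} and uses the Khovanskii property to force a zero remainder. Your spelled-out contradiction via condition~\ref{alg:homsubduc:5} is a slightly more explicit rendering of the paper's appeal to surjectivity of \(\beta\), but the argument is the same.
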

\begin{proof}
  We write \[\beta:K[X_1,\dots,X_k]\to \initial_v^\Delta(A),~X_i\mapsto\initial_v^\Delta(f_i)\] and let \(\deg_\Gamma\) and \(\initial_\Gamma\) be as in Section~\ref{sec:homkhov}.

  Assume that \(\{f_1,\dots,f_k\}\) is a \(\Delta\)\=/homogeneous MUVAK basis and let \(f \in A\) be \(\Delta\)\=/homogeneous.
  By Lemma~\ref{lem:muvakbeta}, there is a \(\Delta\)\=/homogeneous polynomial \(h\in K[X_1,\dots,X_k]\) with \(\alpha(h) = f\) and \(\beta(\initial_\Gamma(h)) = \initial_v^\Delta(f)\).
  Hence \(\beta\) is surjective and \(\{f_1,\dots,f_k\}\) is a \(\Delta\)\=/homogeneous Khovanskii basis.

  Assume now that \(\{f_1,\dots,f_k\}\) is a \(\Delta\)\=/homogeneous Khovanskii basis or, equivalently, that \(\beta\) is surjective.
  Let \(f\in A\) be \(\Delta\)\=/homogeneous.
  We run Algorithm~\ref{alg:homsubduc} for \(f\) together with \(\{f_1,\dots,f_k\}\).
  By assumption, the algorithm terminates, and by surjectivity of \(\beta\), the algorithm must give remainder \(r = 0\) and an element \(h\in K[X_1,\dots,X_k]\) with \(f = h(f_1,\dots,f_k)\) and \(\deg_\Gamma(\initial_\Gamma(h)) = v(f)\).
  Hence \(f\) is faithfully representable in \(f_1,\dots,f_k\) with respect to \(v\) via \(h\).
\end{proof}

\begin{remark}
  It appears that being a MUVAK basis with respect to a single valuation is a stronger property than being a Khovanskii basis.
  It is not clear what an analogue for the filtration given by one valuation in the case of multiple valuations should be.
  Finding such an analogue appears to be crucial for fully integrating MUVAK bases into the theory of SAGBI and Khovanskii bases.
\end{remark}

\begin{remark}
  If \(\{f_1,\dots,f_k\}\) is a \(\Delta\)\=/homogeneous MUVAK basis with respect to \(v_1,\dots,v_m\), then \(\{f_1,\dots,f_k\}\) is a \(\Delta\)\=/homogeneous MUVAK basis with respect to every single valuation \(v_i\) as well.
  Hence \(\{f_1,\dots,f_k\}\) is a \(\Delta\)\=/homogeneous Khovanskii basis with respect to every valuation \(v_i\) by Theorem~\ref{thm:muvakkhov}.

  The converse statement is in general wrong: The set \(\{f_1,\dots,f_4\}\) in Example~\ref{ex:muvakrun1} is a Khovanskii basis for \(A\) with respect to \(v_x\) as well as with respect to \(v_y\), but it is not a MUVAK basis with respect to \(v_x\) and \(v_y\).
\end{remark}

\subsection{Negative homogenization}

To give an algorithm for the computation of a \(\Delta\)\=/homogeneous MUVAK basis, we require the notion of `negative homogenization'.

Let \(R\coloneqq K[X_1,\dots, X_k]\) be a polynomial ring graded by a matrix \(W\in\ZZ^{m\times k}\) of rank \(m \geq 1\), that is, \(\deg_W(X_j) \coloneqq (W_{1j},\dots, W_{mj})\in\ZZ^m\), \(1\leq j\leq k\).
We introduce additional variables \(t_1,\dots, t_m\) and endow the ring \(R^- \coloneqq K[X_1,\dots, X_k, t_1,\dots, t_m]\) with the grading induced by the matrix \(W^- \coloneqq (W \mid -I_m)\), where \(I_m\) denotes the identity matrix of rank \(m\).
Analogous to the homogenization with respect to variables of degree 1, see for example \cite[Definition~4.3.1]{KR05}, we define the homogenization of a polynomial \(f\in R\) with respect to the variables \(t_1,\dots, t_m\) of degree \(-1\).
\begin{definition}[Minimal degree and negative homogenization]
  Let \(f\in R\setminus \{0\}\) and write \(f = f_1 + \cdots + f_s\) with \(f_i\) the terms of \(f\).
  For \(j = 1,\dots, s\), let \(\deg_W(f_j) = (d_{1j},\dots, d_{mj})\in\ZZ^m\).
  Moreover, for \(i = 1,\dots, m\), let \(\mu_i \coloneqq \min\{d_{ij}\mid j = 1,\dots, s\}\).
  \begin{enumerate}
    \item The tuple \((\mu_1,\dots, \mu_m)\) is called the \emph{minimal degree} of \(f\) with respect to the grading given by \(W\) and denoted by \(\mindeg_W(f)\).
    \item The \emph{homogenization} of \(f\) with respect to the grading given by \(W\) is the polynomial \[f^\hom = \sum_{j = 1}^s f_j t_1^{d_{1j} - \mu_1}\cdots t_m^{d_{mj} - \mu_m}\in R^-.\]
      For the zero polynomial, we set \(0^\hom = 0\).
  \end{enumerate}
\end{definition}
The polynomial \(f^\hom\) is homogeneous of degree \(\mindeg_W(f)\) by construction.

We connect the negative homogenization to the homogenization in the usual sense, that is, with respect to variables of positive degree.
\begin{lemma}
  \label{lem:neghom}
  Let \(R^+ \coloneqq K[X_1,\dots, X_k, u_1,\dots, u_r]\) be a polynomial ring graded by the matrix \(W^+ \coloneqq (- W \mid I_r)\).
  For a polynomial \(f\in R\setminus \{0\}\), let \(f_-\), respectively \(f_+\), be the homogenization of \(f\) as an element of \(R^-\), respectively \(R^+\), with respect to \(t_1,\dots, t_r\), respectively \(u_1,\dots, u_r\).
  Then \(f_- = f_+(X_1,\dots, X_k, t_1,\dots, t_r)\).
\end{lemma}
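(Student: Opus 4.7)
The plan is to unpack both definitions and compare the exponents term by term; the claim then reduces to a direct bookkeeping argument.

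First, I would recall from the previous paragraph that $f_- = \sum_{j=1}^{s} f_j t_1^{d_{1j}-\mu_1}\cdots t_m^{d_{mj}-\mu_m}$, where $f = f_1 + \cdots + f_s$ is the decomposition into terms, $\deg_W(f_j) = (d_{1j},\dots,d_{mj})$, and $\mu_i = \min_j d_{ij}$.

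Next, I would compute $f_+$ from the definition of the classical (positive\=/weight) homogenization with respect to $u_1, \dots, u_m$. Since the grading on $R^+$ is given by $W^+ = (-W \mid I_m)$, the term $f_j$ has $W^+$\=/degree $(-d_{1j},\dots,-d_{mj})$. To homogenize w.r.t.\ variables of positive degree, each term is multiplied by the unique monomial in the $u_i$ that raises the degree in coordinate $i$ to the maximum $M_i \coloneqq \max_j (-d_{ij}) = -\mu_i$; the required exponent is $M_i - (-d_{ij}) = d_{ij} - \mu_i$. Thus
\[
  f_+ = \sum_{j=1}^{s} f_j\, u_1^{d_{1j}-\mu_1}\cdots u_m^{d_{mj}-\mu_m}.
\]

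Substituting $u_i \mapsto t_i$ yields precisely $f_-$, which proves the claim. The point is conceptual rather than computational: flipping the sign of $W$ turns the minimum in the definition of $\mindeg_W$ into the maximum appearing in positive homogenization, and these two changes cancel so that the $t_i$ and $u_i$ carry identical exponents. The only mild subtlety is to make sure the reference definition (e.g.\ \cite[Definition~4.3.1]{KR05}) is the same "lift each term to the maximal degree" convention in the multigraded setting; once that is confirmed there is no real obstacle, since both sides are defined by the same exponent formula up to renaming $u_i$ to $t_i$.
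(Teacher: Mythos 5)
Your proof is correct and follows essentially the same approach as the paper: both compute the exponent on $u_i$ in the positive homogenization $f_+$ via the observation that the top degree with respect to $-W$ is the negative of $\mindeg_W(f)$, so that the exponents $d_{ij}-\mu_i$ coincide on both sides after renaming $u_i \mapsto t_i$. The paper phrases this via $\mu_+ = \operatorname{topdeg}_{-W}(f)$ and $-\mu_+ = \mu_-$, which is exactly your $M_i = -\mu_i$.
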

\begin{proof}
  Write \(f = f_1 + \cdots + f_s\) with \(f_i\) the terms of \(f\).
  Let \(\mu_+\coloneqq\operatorname{topdeg}_{-W}(f)\) (see \cite[Definition~4.3.1]{KR05}) and \(\mu_-\coloneqq\mindeg_W(f)\).
  We observe that \(-\mu_+ = \mu_-\).
  We have
  \begin{align*}
    f_- &= \sum_{j = 1}^s\Big(f_j\prod_{i = 1}^rt_i^{\deg_W(f_j)_i - (\mu_-)_i}\Big) = \sum_{j = 1}^s\Big(f_j\prod_{i = 1}^rt_i^{-\deg_{-W}(f_j)_i + (\mu_+)_i}\Big)\\
    &= f_+(X_1,\dots, X_k, t_1,\dots, t_r),
  \end{align*}
  as required.
\end{proof}

\begin{remark}
  \label{rem:neghom}
  Lemma~\ref{lem:neghom} enables us to use well\-/known `calculation rules' for homogenized polynomials and ideals as in \cite[Proposition~4.3.2]{KR05} and \cite[Proposition~4.3.5]{KR05}, although these rules are only proven for homogenizations with respect to variables of positive degree in the given references.
\end{remark}

Given a (homogeneous) polynomial \(f\in K[X_1,\dots,X_k, t_1,\dots,t_m]\), we write \(f^\deh \coloneqq f|_{t_j = 1}\in K[X_1,\dots,X_k]\) for the dehomogenized polynomial.
For an ideal \(I\ideal K[X_1,\dots,X_k]\), we write \(I^\hom \coloneqq \langle f^\hom\mid f\in I\rangle\ideal K[X_1,\dots,X_k, t_1,\dots,t_m]\), as usual.

\subsection{Computing homogeneous MUVAK bases}
Yamagishi \cite{Yam18} gives an algorithm for the computation of a \(\Delta\)\=/homogeneous MUVAK basis in the special situation discussed in section~\ref{sec:cox}.
In the following, we present an algorithm that appears to perform better than Yamagishi's algorithm.
However, just as in \cite{Yam18}, the algorithm is restricted to the case \(\Gamma = \ZZ\).
From now on, we hence assume that \(\Gamma = \ZZ\) with the natural ordering \(\geq\).

Let \(f_1,\dots,f_k\in A\) be \(\Delta\)\=/homogeneous generators of \(A\) as a \(K\)\=/algebra and consider the morphisms \(\alpha\) and \(\beta_i\) for \(i\in\{1,\dots,m\}\) as above.
Let \(R\coloneqq K[X_1,\dots,X_k]\).
We collect the gradings \(\deg_i\) on \(R\) in a matrix \(W\in\ZZ^{m\times k}\) via \(W_{ij} \coloneqq \deg_i(X_j)\).
In the following, the homogenization operator \((\cdot)^\hom\) always means the negative homogenization with respect to the grading given by \(W\) (mapping from \(R\) to \(R[t_1,\dots,t_m]\)).
We extend the gradings \(\deg_i\) to \(R[t_1,\dots,t_m]\) by putting \(\deg_i(t_j) = -\delta_{ij}\) for \(1\leq i, j\leq m\), where \(\delta_{ij}\) denotes the Kronecker delta.

\begin{notation}
  For \(h\in K[X_1,\dots,X_k]\) and \(i\in\{1,\dots,m\}\), we write \[\mindeg_i(h) = \min_{t\in\Supp(h)}\deg_i(t).\]
\end{notation}

We observe:
\begin{lemma}
  \label{lem:degivi}
  Let \(f\in A\) be \(\Delta\)\=/homogeneous and let \(h\in\alpha^{-1}(f)\).
  Then \[\deg_i(h^\hom) = \mindeg_i(h) \leq v_i(f)\] for all \(i\in\{1,\dots, m\}\).
\end{lemma}
\begin{proof}
  The first equality holds because \((\cdot)^\hom\) denotes a negative homogenization.
  The inequality follows from the definition of the gradings \(\deg_i\).
\end{proof}

Let \(I \coloneqq \ker(\alpha)\).
We have the following characterization of faithful representability.
\begin{lemma}
  \label{lem:faithrep}
  Let \(f\in A\) be \(\Delta\)\=/homogeneous, let \(h\in \alpha^{-1}(f)\) be any preimage and put \[\theta\coloneqq\prod_{i = 1}^m t_i^{v_i(f) - \deg_i(h^\hom)}.\]
  Then \(f\) is faithfully representable in \(f_1,\dots,f_k\) with respect to \(v_1,\dots,v_m\) if and only if \(h^\hom\in I^\hom + \langle\theta\rangle\).
\end{lemma}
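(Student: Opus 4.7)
The plan is to exploit the observation that a $W$-homogeneous polynomial in $R[t_1,\dots,t_m]$ is uniquely determined by its $W$-degree together with its dehomogenization at $t_i=1$, since every monomial $cX^a$ occurring in the dehomogenization lifts to a unique monomial $cX^a\prod_i t_i^{b_i}$ of prescribed $W$-degree (with exponents $b_i\geq 0$ forced by the degree equation). As a preparatory step, the valuation inequality \(v_i(f) = v_i(\alpha(h)) \geq \min_{X^a\in\Supp(h)}v_i(X^a(f_1,\dots,f_k)) = \mindeg_i(h) = \deg_i(h^\hom)\) shows that the exponents of $\theta$ are non\-/negative, so $\theta$ is a genuine monomial in $R[t_1,\dots,t_m]$, $W$-homogeneous of degree $\deg_W(h^\hom)-v(f)$.

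For the direction $(\Leftarrow)$, I would start from $h^\hom = g + \theta q$ with $g\in I^\hom$ and $q\in R[t_1,\dots,t_m]$. Because $h^\hom$ and $\theta$ are $W$-homogeneous and $I^\hom$ is a $W$-homogeneous ideal, we may extract the $W$-homogeneous component of degree $\deg_W(h^\hom)$ on both sides and thereby assume $q$ is $W$-homogeneous of degree $v(f)$. Setting $t_i=1$ yields $h = g|_{t=1} + q|_{t=1}$; then $h' \coloneqq q|_{t=1}$ satisfies $h - h' = g|_{t=1}\in I$, so $\alpha(h') = f$. Each monomial $cX^at_1^{b_1}\cdots t_m^{b_m}$ of $q$ satisfies $\deg_W(X^a) = v(f) + (b_1,\dots,b_m) \geq v(f)$ componentwise, so every surviving monomial of $h'$ has $\deg_W \geq v(f)$, giving $\mindeg_W(h')\geq v(f)$, i.e., faithful representability.

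For the direction $(\Rightarrow)$, I would take $h'$ witnessing faithful representability, so $\mu' \coloneqq \mindeg_W(h') \geq v(f)$; combined with the valuation inequality $\mu' \leq v(f)$, this forces $\mu' = v(f)$. Setting $\mu \coloneqq \mindeg_W(h) \leq v(f) = \mu'$ and $\mu'' \coloneqq \mindeg_W(h - h')$ (with the convention $(h-h')^\hom = 0$ if $h=h'$), cancellation only raises minimal degrees, so $\mu'' \geq \min(\mu,\mu') = \mu$. Both $t_1^{\mu''_1-\mu_1}\cdots t_m^{\mu''_m - \mu_m}(h-h')^\hom + t_1^{\mu'_1-\mu_1}\cdots t_m^{\mu'_m-\mu_m}(h')^\hom$ and $h^\hom$ are $W$-homogeneous of degree $\mu$ and both reduce to $(h-h')+h'=h$ at $t_i=1$, hence they coincide by the uniqueness observation. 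Since $\mu' - \mu = v(f) - \deg_W(h^\hom)$, the second summand is precisely $\theta\cdot(h')^\hom$, while the first summand lies in $I^\hom$ because $(h-h')^\hom\in I^\hom$. Therefore $h^\hom\in I^\hom + \langle\theta\rangle$.

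The main obstacle is the bookkeeping of the multigraded $t$-exponents: the uniqueness argument for $W$-homogeneous lifts must be applied componentwise in the $\mathbb Z^m$-grading, and one needs the inequalities $\mu \leq \mu', \mu''$ to guarantee that the required powers $\mu'-\mu$ and $\mu''-\mu$ of the $t_i$ are non\-/negative. This is precisely where the assumption $\mu'\geq v(f)$ (equivalently, faithful representability) enters decisively, and it is also where the valuation axioms are used to pin down $\mu'=v(f)$ and $\mu \leq v(f)$.
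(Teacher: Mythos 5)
Your proof is correct and follows essentially the same route as the paper: in both directions one homogenizes the difference $h-h'$, with faithful representability pinning $\mindeg_W(h')=v(f)$. Your $(\Rightarrow)$ step is in fact slightly more careful than the paper's, which asserts $(h-h')^\hom = h^\hom - h'^\hom\theta$ flatly, whereas (as your $\mu''$-bookkeeping makes explicit) this holds only up to a nonnegative monomial $t$-factor when cancellation raises $\mindeg_W(h-h')$ above $\mindeg_W(h)$ -- a gap that does not affect the conclusion since $I^\hom$ absorbs the extra $t$-power.
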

\begin{proof}
  Let \(f\) be faithfully representable and let \(h'\in \alpha^{-1}(f)\) such that for all \(t\in\Supp(h')\) and for all \(i\in \{1,\dots,m\}\), we have \(v_i(t(f_1,\dots,f_k)) \geq v_i(f)\).
  As \(\alpha(h) = \alpha(h')\), we have \(h - h'\in I\), so \((h - h')^\hom\in I^\hom\).
  By assumption, we have \(\deg_i(h'^\hom) = v_i(f)\) and by Lemma~\ref{lem:degivi}, we have \(\mindeg_i(h) \leq \mindeg_i(h')\) for all \(i\in \{1,\dots, m\}\).
  Hence \((h - h')^\hom = h^\hom - h'^\hom\theta\) and therefore \[h^\hom = (h - h')^\hom + h'^\hom\theta \in I^\hom + \langle \theta\rangle.\]

  Conversely, let \(h^\hom\in I^\hom + \langle \theta\rangle\).
  Then there are \(h'\in R[t_1,\dots,t_m]\) and \(h''\in I^\hom\) with \(h^\hom = h'' + h'\theta\).
  We may assume that \(h'\) is \(\deg_i\)\=/homogeneous for all \(i\in\{1,\dots,m\}\) with \(\deg_i(h') = \deg_i(h^\hom) - \deg_i(\theta) = v_i(f)\).
  We hence have \(\deg_i(t) \geq v_i(f)\) for all \(t\in \Supp(h'^\deh)\).
  Further \(h - h'^\deh\in I\) by \cite[Proposition~4.3.5~b)]{KR05}, so \(f\) is faithfully representable via \(h'^\deh\).
\end{proof}

\begin{example}
  \label{ex:muvakrun2}
  This example is part of the series also containing \ref{ex:muvakrun1}, \ref{ex:muvakrun3} and \ref{ex:muvakrun4}.
  Let again \(f = xy\).
  The homogenization matrix is given by \[W = \begin{pmatrix} 1 & 2 & 0 & 0\\ 0 & 0 & 1 & 2 \end{pmatrix}.\]
  Computing the relations of \(f_1,\dots,f_4\) gives \[I = \langle X_1 - X_2 - X_3 + X_4, X_3^2 - X_2X_4 - 2X_3X_4 + X_4^2\rangle\] with homogenization \[I^\hom = \langle X_1t_1 - X_2t_1^2 - X_3t_2 + X_4t_2^2, X_3^2 - X_2X_4t_1^2 - 2X_3X_4t_2 + X_4^2t_2^2\rangle.\]
  For the polynomial \(h = X_1 - X_2\) with \(h(f_1,\dots,f_4) = f\), we have \(h^\hom = X_1 - X_2t_1\), \(\deg_1(h^\hom) = 1\) and \(\deg_2(h^\hom) = 0\).
  Hence \(\theta = t_2\) and we see that \(h^\hom\notin I^\hom + \langle t_2\rangle\).
  So, Lemma~\ref{lem:degivi} confirms that \(f\) is not faithfully representable in \(f_1,\dots,f_4\).
\end{example}

\begin{lemma}
  \label{lem:faithrep2}
  Let \(f\in A\) be \(\Delta\)\=/homogeneous.
  If for all \(h\in \alpha^{-1}(f)\) and for all \(i\in \{1,\dots,m\}\) we have \(\mindeg_i(h) = v_i(f)\) or \(\initial_i(h)^\hom \in I^\hom + \langle t_i\rangle\), then \(f\) is faithfully representable in \(f_1,\dots,f_k\) with respect to \(v_1,\dots,v_m\).
\end{lemma}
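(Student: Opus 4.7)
The plan is to produce a single preimage $h \in \alpha^{-1}(f)$ with $\mindeg_i(h) = v_i(f)$ for \emph{every} $i \in \{1,\dots,m\}$; such an $h$ directly witnesses the faithful representability of $f$ via itself. I would construct such $h$ by iterated improvement, starting from any $\Delta$-homogeneous preimage of $f$. For every preimage $h$ one has $\mindeg_j(h) \leq v_j(\alpha(h)) = v_j(f)$, so the quantity $S(h) \coloneqq \sum_{j = 1}^m \mindeg_j(h) \in \ZZ$ is bounded above by $\sum_j v_j(f)$. It therefore suffices to establish an improvement step: whenever $\mindeg_i(h) < v_i(f)$ for some $i$, produce $h' \in \alpha^{-1}(f)$ with $\mindeg_i(h') > \mindeg_i(h)$ and $\mindeg_l(h') \geq \mindeg_l(h)$ for all $l \neq i$. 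Because the hypothesis of the lemma applies to each newly produced preimage, iterating the step terminates after at most $\sum_j v_j(f) - S(h_0)$ rounds at an $h$ of the desired form.

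For the improvement, use the hypothesis to pick $g \in I^\hom$ with $g \equiv \initial_i(h)^\hom \pmod{\langle t_i \rangle}$. Since $\initial_i(h)^\hom$ is $W^-$-homogeneous of degree $\mindeg_W(\initial_i(h))$ and both $I^\hom$ and $\langle t_i \rangle$ are $W^-$-homogeneous ideals, one may additionally take $g$ to be $W^-$-homogeneous of the same degree. By Remark~\ref{rem:neghom} we have $g^\deh \in I$, so $h' \coloneqq h - g^\deh \in \alpha^{-1}(f)$ is a new preimage. Writing $g = \sum_{n \geq 0} g_n t_i^n$ with $g_n \in K[X_1,\dots,X_k, t_1,\dots,t_{i-1}, t_{i+1},\dots,t_m]$, the congruence modulo $\langle t_i \rangle$ forces $g_0 = \initial_i(h)^\hom$ and consequently $g_0^\deh = \initial_i(h)$, giving $h' = (h - \initial_i(h)) - \sum_{n \geq 1} g_n^\deh$.

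The main bookkeeping step is then to translate the $W^-$-homogeneity of $g$ into degree bounds on the pieces $g_n^\deh$. Any monomial $X^\alpha \prod_j t_j^{\beta_j}$ appearing in $g_n t_i^n$ satisfies $\deg_i(X^\alpha) = \mindeg_i(h) + n$ (using $\deg_i(t_i) = -1$ and $\deg_i(t_l) = 0$ for $l \neq i$) and $\deg_l(X^\alpha) = \mindeg_l(\initial_i(h)) + \beta_l \geq \mindeg_l(\initial_i(h)) \geq \mindeg_l(h)$ for $l \neq i$. Dehomogenising gives $\mindeg_i(g_n^\deh) \geq \mindeg_i(h) + 1$ and $\mindeg_l(g_n^\deh) \geq \mindeg_l(h)$ for all $n \geq 1$ and $l \neq i$. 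Combined with the elementary estimates $\mindeg_i(h - \initial_i(h)) > \mindeg_i(h)$ and $\mindeg_l(h - \initial_i(h)) \geq \mindeg_l(h)$, these yield the required inequalities for $h'$.

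I expect the main obstacle to be precisely this control of $\mindeg_l$ for $l \neq i$ while strictly increasing $\mindeg_i$: a merely $\deg_i$-homogeneous choice of $g$ would not suffice, and it is the upgrade to full $W^-$-homogeneity that translates positive $t_l$-exponents into the lower bounds on the $\deg_l$-values of the $X$-variables needed to make the iteration monotone in every coordinate simultaneously.
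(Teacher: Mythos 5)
Your proof is correct and follows essentially the same route as the paper: pick a preimage, and whenever some coordinate has $\mindeg_i(h) < v_i(f)$, use the hypothesis together with $W^-$-homogeneity of a witness to replace $h$ by a new preimage whose $\mindeg_i$ strictly increases while no other $\mindeg_l$ decreases, then iterate to termination. Your explicit monovariant $S(h) = \sum_j \mindeg_j(h) \leq \sum_j v_j(f)$ is a slightly cleaner way to phrase the termination that the paper handles index-by-index, but the improvement step (a $W^-$-homogeneous $g \in I^\hom$ congruent to $\initial_i(h)^\hom$ modulo $\langle t_i\rangle$, then passing to dehomogenizations) is the same construction.
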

\begin{proof}
  Let \(h\in \alpha^{-1}(f)\).
  If \(\mindeg_i(h) = v_i(f)\) for all \(i \in\{1,\dots, m\}\), then \(f\) is faithfully representable in \(f_1,\dots,f_k\) via \(h\).
  So, assume there is \(i\in\{1,\dots,m\}\) with \(\mindeg_i(h) < v_i(f)\), hence \(\initial_i(h)^\hom \in I^\hom + \langle t_i\rangle\) by assumption.
  Then there is \(h'\in \langle t_i\rangle\) with \(\initial_i(h)^\hom - h'\in I^\hom\), so \(\initial_i(h) - h'^\deh \in I\).
  We may assume \(\deg_j(h') = \deg_j(\initial_i(h)^\hom)\) for all \(j\in\{1,\dots,m\}\), so \(\mindeg_j(h'^\deh) \geq \mindeg_j(\initial_i(h))\) and \(\mindeg_i(h'^\deh) > \deg_i(\initial_i(h))\) as \(t_i\mid h'\).
  We now replace \(h\) by \(h - \initial_i(h) + h'^\deh\).
  By continuing to increase the \(\deg_i\)\=/degree in the described way, we arrive at a polynomial \(\tilde h\in\alpha^{-1}(f)\) with \(\mindeg_i(\tilde h) = v_i(f)\) and \(\mindeg_j(\tilde h) \geq \mindeg_j(h)\) for all \(j\neq i\) after finitely many steps.
  We may now continue this process with another index \(i'\) for which \(\mindeg_{i'}(\tilde h) < v_{i'}(f)\) and eventually we obtain a polynomial \(h'\in R\) with \(\alpha(h') = f\) and \(\mindeg_j(h') = v_j(f)\) for all \(j\in \{1,\dots, m\}\), so \(f\) is faithfully representable.
\end{proof}

The following theorem may be seen as an analogue of Theorem~\ref{thm:surjeqeq}.
\begin{theorem}
  \label{thm:muvak}
  Let \(f_1,\dots,f_k\in A\) be \(\Delta\)\=/homogeneous generators and let \(I \coloneqq \ker(\alpha)\) and \(J_i\coloneqq \ker(\beta_i)\), \(1\leq i\leq m\), with the morphisms \(\alpha\) and \(\beta_i\) as above.
  The set \(\{f_1,\dots,f_k\}\) is a \(\Delta\)\=/homogeneous MUVAK basis of \(A\) with respect to \(v_1,\dots,v_m\) if and only if we have \(J_i^\hom \subseteq I^\hom + \langle t_i\rangle\) for all \(i\in\{1,\dots,m\}\).
\end{theorem}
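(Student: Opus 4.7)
The plan is to prove both directions via the characterisations in Lemma~\ref{lem:faithrep}, Lemma~\ref{lem:faithrep2} and Lemma~\ref{lem:muvakbeta}.

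For the direction $(\Rightarrow)$, fix an index $i$. Since $\beta_i$ is graded for both the $\Delta$- and the $\deg_i$-grading, the kernel $J_i$ is bigraded, so $J_i^{\hom}$ is generated by $h^{\hom}$ for $h\in J_i$ that are $\Delta$- and $\deg_i$-bihomogeneous; I restrict to such $h$. For a bihomogeneous $h\in J_i$ of bidegree $(\delta, d)$, every monomial of $h$ already has $\deg_i = d$, so $h^{\hom}$ has no $t_i$-factor and $\deg_i(h^{\hom}) = d$. Let $f\coloneqq\alpha(h)$. If $f = 0$ then $h\in I$ and $h^{\hom}\in I^{\hom}$; otherwise $f$ is $\Delta$-homogeneous of degree $\delta$, and the vanishing of $\beta_i(h)$ forces $v_i(f) > d$. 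By the MUVAK basis hypothesis together with Lemma~\ref{lem:muvakbeta}, $f$ is faithfully representable, so Lemma~\ref{lem:faithrep} applied to the preimage $h$ yields $h^{\hom}\in I^{\hom} + \langle\theta\rangle$ with $\theta = \prod_j t_j^{v_j(f) - \deg_j(h^{\hom})}$. Since $\deg_i(h^{\hom}) = d < v_i(f)$, the variable $t_i$ divides $\theta$, so $\langle\theta\rangle\subseteq\langle t_i\rangle$ and the desired inclusion follows.

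For the direction $(\Leftarrow)$, let $0\neq f\in A$ be $\Delta$-homogeneous and aim to verify the hypothesis of Lemma~\ref{lem:faithrep2}. Fix $h\in\alpha^{-1}(f)$ and an index $i$. The inequality $v_i(f)\geq\mindeg_i(h)$ holds by subadditivity of $v_i$, so we may assume $\mindeg_i(h) < v_i(f)$ and set $d\coloneqq\mindeg_i(h)$. The key step is to show $\initial_i(h)\in J_i$: decompose $\initial_i(h) = \sum_\delta g_\delta$ into $\Delta$-homogeneous components (note that $h$ itself need not be $\Delta$-homogeneous). Every monomial of $h - \initial_i(h)$ has $\deg_i > d$, so subadditivity of $v_i$ gives $v_i(\alpha(h - \initial_i(h))) > d$. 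Decomposing the identity $\alpha(h) = f$ according to $\Delta$-degree and using the $\Delta$-homogeneity of $f$, one obtains $v_i(\alpha(g_\delta)) > d$ for every $\delta$; since each $g_\delta$ is $\deg_i$-homogeneous of degree $d$, the explicit description of $\beta_i$ in $\gr_{v_i}(B)\otimes_K K\Delta$ then yields $\beta_i(\initial_i(h)) = 0$. The hypothesis $J_i^{\hom}\subseteq I^{\hom} + \langle t_i\rangle$ thus provides $\initial_i(h)^{\hom}\in I^{\hom} + \langle t_i\rangle$, which is precisely the second alternative in Lemma~\ref{lem:faithrep2}.

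The main technical point is the proof that $\initial_i(h)\in J_i$ for a not necessarily $\Delta$-homogeneous preimage $h$: Lemma~\ref{lem:faithrep2} demands the condition for every element of $\alpha^{-1}(f)$, which forces one to decompose $\initial_i(h)$ along the $\Delta$-grading and to run the subadditivity argument for each $\Delta$-component separately. Once this is handled, both directions reduce to direct applications of the lemmas cited above.
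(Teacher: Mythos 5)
Your proof is correct and follows the same route as the paper: both directions rest on Lemmas~\ref{lem:faithrep}, \ref{lem:faithrep2} and \ref{lem:initial}. In the forward direction your restriction to $\Delta$- and $\deg_i$-bihomogeneous generators of $J_i$ plays the same role as the paper's restriction to $\Delta$-homogeneous ones, and your explicit treatment of the case $\alpha(h)=0$ covers an edge case the paper passes over silently (it tacitly assumes $f\neq 0$ when invoking Lemma~\ref{lem:initial}). The real added value is in the reverse direction. The paper picks $h\in\alpha^{-1}(f)$ and concludes \enquote{$\initial_i(h)\in J_i$ by Lemma~\ref{lem:initial}}, but Lemma~\ref{lem:initial} only controls the evaluation in $\gr_{v_i}(B)$, not in $\gr_{v_i}(B)\otimes_K K\Delta$; for a preimage $h$ that is not $\Delta$-homogeneous --- and Lemma~\ref{lem:faithrep2}, as stated, quantifies over all preimages --- this does not directly yield $\beta_i(\initial_i(h))=0$. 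Your decomposition of $\initial_i(h)$ into $\Delta$-components $g_\delta$, combined with the componentwise bound $v_i(\alpha(g_\delta))>d$, is exactly what is needed to get $\beta_i(g_\delta)=0$ for each $\delta$ and hence $\initial_i(h)\in J_i$. The one loose end you should close is the case $\alpha(g_\delta)=0$, where $v_i(\alpha(g_\delta))$ is undefined; then $g_\delta$ is a $\deg_i$-homogeneous element of $I$, and $\beta_i(g_\delta)=0$ follows as in Lemma~\ref{lem:inIsubJw}. Altogether your argument fills a genuine small gap in the paper's exposition; an alternative fix would be to observe that the iteration in the proof of Lemma~\ref{lem:faithrep2} preserves $\Delta$-homogeneity, so the hypothesis there only needs to be verified for $\Delta$-homogeneous preimages.
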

\begin{proof}
  Assume that \(\{f_1,\dots,f_k\}\) is a \(\Delta\)\=/homogeneous MUVAK basis of \(A\) with respect to \(v_1,\dots,v_m\).
  Fix \(i\in \{1,\dots,m\}\) and let \(h\in J_i\).
  As \(J_i\) is a \(\Delta\)\=/homogeneous ideal, we may assume that \(h\) is \(\Delta\)\=/homogeneous.
  Set \(f\coloneqq h(f_1,\dots,f_k)\).
  By assumption, the polynomial \(f\) is faithfully representable, so by Lemma~\ref{lem:faithrep}, we have \(h^\hom \in I^\hom + \langle\theta\rangle\) with \(\theta = \prod_{j = 1}^mt_j^{v_j(f) - \deg_j(h^\hom)}\).
  By Lemma~\ref{lem:initial}, we have \(\mindeg_i(h) < v_i(f)\), so \(t_i\mid \theta\).
  Hence \(h^\hom\in I^\hom + \langle t_i\rangle\).

  Assume now \(J_i^\hom\subseteq I^\hom + \langle t_i\rangle\) for all \(i\in\{1,\dots,m\}\).
  Let \(f\in A\) be \(\Delta\)\=/homogeneous and pick \(h\in\alpha^{-1}(f)\).
  If \(\mindeg_i(h) \geq v_i(f)\) for all \(i\in\{1,\dots,m\}\), then \(f\) is faithfully representable, so assume that there is \(i\in\{1,\dots,m\}\) with \(\mindeg_i(h) < v_i(f)\).
  Then \(\initial_i(h)\in J_i\) by Lemma~\ref{lem:initial}, so \(\initial_i(h)^\hom\in J_i^\hom\subseteq I^\hom + \langle t_i\rangle\).
  Hence \(f\) is faithfully representable in \(f_1,\dots,f_k\) with respect to \(v_1,\dots,v_m\) by Lemma~\ref{lem:faithrep2}.
\end{proof}

\begin{example}
  \label{ex:muvakrun3}
  This example is part of the series also containing \ref{ex:muvakrun1}, \ref{ex:muvakrun2} and \ref{ex:muvakrun4}.
  Computing the relations of the initial forms gives the ideals \[J_1 = \langle X_3 - X_4, X_1^2 - X_2X_4\rangle\text{ and }J_2 = \langle X_1 - X_2, X_3^2 -X_2X_4\rangle,\] where \(J_1\) corresponds to \(v_x\) and \(J_2\) to \(v_y\).
  Homogenizing yields \[J_1^\hom = \langle X_3 - X_4t_2, X_1^2 - X_2X_4t_2^2\rangle\text{ and }J_2^\hom = \langle X_1 - X_2t_1, X_3^2 - X_2X_4t_1^2\rangle.\]
  One checks that \(J_i^\hom \not\subseteq I^\hom + \langle t_i\rangle\) for \(i = 1,2\).
\end{example}

Theorem~\ref{thm:muvak} motivates Algorithm~\ref{alg:muvak}.
\begin{algorithm}
  \caption{Homogeneous MUVAK basis}
  \label{alg:muvak}
  \Input{\(\Delta\)\=/homogeneous generators \(f_1,\dots,f_k\in A\leq B\), valuations \(v_1,\dots,v_m:B\setminus\{0\}\to \ZZ\)}
  \Output{A \(\Delta\)\=/homogeneous MUVAK basis of \(A\) with respect to \(v_1,\dots,v_m\)}
  \BlankLine
  Initialize \(\mathcal G \coloneqq \{f_1,\dots,f_k\}\)\;
  Compute the ideals \(I^\hom\) and \(J_i^\hom\) for all \(i\in\{1,\dots,m\}\)\;
  \While{\(J_i^\hom\not\subseteq I^\hom + \langle t_i\rangle\) for some \(i\in\{1,\dots,m\}\)}{
    \For{\(i\in\{1,\dots,m\}\)}{
      Write \(J_i^\hom = \langle h_1,\dots,h_r\rangle\) and \(\mathcal G = \{f_1,\dots,f_l\}\)\;
      \(\mathcal G \coloneqq \mathcal G \cup \{h_j^\deh(f_1,\dots,f_l)\mid h_j\notin I^\hom + \langle t_i\rangle,~j = 1,\dots,r\}\)\;
      Update \(I^\hom\) and \(J_i^\hom\)\;
    }
  }
  \Return \(\mathcal G\)\;
\end{algorithm}

\begin{remark}
  We give an efficient method to compute the homogenization of an ideal with respect to the in general only non\-/negative weights \(v_i(f_j)\) in Appendix~\ref{app:bayer}.
\end{remark}

To prove the correctness of Algorithm~\ref{alg:muvak}, we require the following lemma.

\begin{lemma}
  \label{lem:faithreptrans}
  Let \(\{\tilde f_1,\dots,\tilde f_s\}\subseteq A\) be a \(\Delta\)\=/homogeneous MUVAK basis of \(A\) with respect to \(v_1,\dots,v_m\) and let \(f_1,\dots,f_k\in A\) be \(\Delta\)\=/homogeneous generators.
  If \(\tilde f_j\) is faithfully representable in \(f_1,\dots,f_k\) for all \(j\in\{1,\dots,s\}\), then \(\{f_1,\dots,f_k\}\) is a \(\Delta\)\=/homogeneous MUVAK basis for \(A\) with respect to \(v_1,\dots,v_m\).
\end{lemma}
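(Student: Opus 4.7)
The plan is to prove that faithful representability composes: given any \(\Delta\)\=/homogeneous \(f\in A\), I would take a faithful representation of \(f\) in the \(\tilde f_j\)'s, substitute into it faithful representations of each \(\tilde f_j\) in the \(f_i\)'s, and check that the valuation bounds survive the substitution.

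Fix a \(\Delta\)\=/homogeneous \(f\in A\). Since \(\{\tilde f_1,\dots,\tilde f_s\}\) is a MUVAK basis, there is a polynomial \(p\in K[Y_1,\dots,Y_s]\) with \(p(\tilde f_1,\dots,\tilde f_s) = f\) such that \(v_i(\tau(\tilde f_1,\dots,\tilde f_s)) \succeq v_i(f)\) for every \(\tau\in\Supp(p)\) and every \(i\in\{1,\dots,m\}\). By hypothesis, for each \(j\in\{1,\dots,s\}\) there is \(q_j\in K[X_1,\dots,X_k]\) with \(q_j(f_1,\dots,f_k) = \tilde f_j\) such that \(v_i(\sigma(f_1,\dots,f_k)) \succeq v_i(\tilde f_j)\) for every \(\sigma\in\Supp(q_j)\) and every \(i\). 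Set \(h\coloneqq p(q_1,\dots,q_s)\in K[X_1,\dots,X_k]\), so that \(h(f_1,\dots,f_k) = f\) automatically.

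The key step is to verify the valuation bound for the terms of \(h\). Expanding the substitution formally, every term in \(p(q_1,\dots,q_s)\) (before combining like monomials) arises from the choice of a term \(\tau = cY_1^{a_1}\cdots Y_s^{a_s}\) of \(p\) together with a choice of terms \(\sigma_{j,l}\in\Supp(q_j)\) for \(1\leq j\leq s\), \(1\leq l\leq a_j\), and has the form \(c\,\sigma_{1,1}\cdots\sigma_{s,a_s}\). By the multiplicativity of the valuation and its invariance under nonzero scalars, the value of \(v_i\) of such a term evaluated at \(f_1,\dots,f_k\) equals
\[
  \sum_{j=1}^{s}\sum_{l=1}^{a_j} v_i(\sigma_{j,l}(f_1,\dots,f_k)) \succeq \sum_{j=1}^{s} a_j\, v_i(\tilde f_j) = v_i(\tau(\tilde f_1,\dots,\tilde f_s)) \succeq v_i(f),
\]
where the first inequality uses the faithfulness of each \(q_j\) and the second the faithfulness of \(p\). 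Since this bound depends only on the monomial part of the product and not on the scalar coefficient, it is preserved when like monomials are collected into the terms of \(h\). Therefore \(v_i(t(f_1,\dots,f_k))\succeq v_i(f)\) for every \(t\in\Supp(h)\) and every \(i\), which witnesses faithful representability of \(f\) in \(f_1,\dots,f_k\).

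The argument poses no deep obstacle; the main (mild) subtlety is the bookkeeping of the substitution together with the observation that collecting like terms does not spoil the lower bound on \(v_i\), because the bound is uniform over all contributions to a given monomial and is insensitive to the resulting scalar coefficient.
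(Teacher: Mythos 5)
Your proof is correct and follows essentially the same route as the paper: substitute a faithful representation \(q_j\) of each \(\tilde f_j\) in the \(f_i\)'s into a faithful representation \(p\) of \(f\) in the \(\tilde f_j\)'s, then use multiplicativity and scalar invariance of the \(v_i\) to track the lower bound through the substitution. The paper compresses the bookkeeping by phrasing everything in terms of \(\mindeg_i\) and the induced gradings on the polynomial rings, but the underlying argument is identical to yours.
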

\begin{proof}
  By assumption, there are \(h_j\in K[X_1,\dots,X_k]\) with \(h_j(f_1,\dots,f_k) = \tilde f_j\) and \(\mindeg_i(h_j) = v_i(\tilde f_j)\) for all \(1\leq i \leq m\) and \(1\leq j\leq s\).

  Let \(f\in A\) be \(\Delta\)\=/homogeneous.
  Then \(f\) is faithfully representable in \(\tilde f_1,\dots,\tilde f_s\), so there is \(h\in K[Y_1,\dots,Y_s]\) with \(h(\tilde f_1,\dots,\tilde f_s) = f\) and \(\mindeg_i(h) = v_i(f)\) for all \(i\).
  Put \(\tilde h \coloneqq h(h_1,\dots,h_s)\in K[X_1,\dots,X_k]\), so \(\tilde h(f_1,\dots, f_k) = f\).
  By definition, we have \(\deg_i(Y_j) = v_i(\tilde f_j)\) for all \(i = 1,\dots,m\) and \(j = 1,\dots, s\).
  Hence \(\mindeg_i(\tilde h) \geq \mindeg_i(h)\).
  We conclude \(\mindeg_i(\tilde h) \geq v_i(f)\) for all \(i\), so \(f\) is faithfully representable in \(f_1,\dots,f_k\) as required.
\end{proof}

\begin{proposition}
  \label{prop:muvak}
  Algorithm~\ref{alg:muvak} terminates after finitely many steps if and only if there exists a finite \(\Delta\)\=/homogeneous MUVAK basis of \(A\) with respect to \(v_1,\dots,v_m\).
  If the algorithm terminates, it correctly returns such a basis.
\end{proposition}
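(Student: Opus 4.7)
The plan mirrors that of Proposition~\ref{prop:homkhovcorrect}. For the easy direction, if Algorithm~\ref{alg:muvak} terminates with output \(\mathcal B\), then \(\mathcal B\supseteq\{f_1,\dots,f_k\}\) still generates \(A\) as a \(K\)\=/algebra since every added element is of the form \(h_j^\deh(f'_1,\dots,f'_l)\in A\); the negated loop condition is exactly the hypothesis of Theorem~\ref{thm:muvak}, so \(\mathcal B\) is a \(\Delta\)\=/homogeneous MUVAK basis of \(A\) with respect to \(v_1,\dots,v_m\).

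For the converse, assume that \(A\) admits a finite \(\Delta\)\=/homogeneous MUVAK basis \(\{\tilde f_1,\dots,\tilde f_s\}\). By Lemma~\ref{lem:faithreptrans}, as soon as every \(\tilde f_j\) is faithfully representable in the current \(\mathcal B\) the set \(\mathcal B\) is itself a MUVAK basis, whence Theorem~\ref{thm:muvak} forces the \texttt{while}\=/condition to fail. It therefore suffices to show that after finitely many iterations each \(\tilde f_j\) is faithfully representable in the current basis.

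Fix \(\tilde f=\tilde f_j\) and any \(\Delta\)\=/homogeneous preimage \(h\) of \(\tilde f\) under the current \(\alpha\). I would argue by well\=/founded induction (with respect to the product order on \(\ZZ_{\geq 0}^m\)) on the tuple \((v_i(\tilde f)-\mindeg_i(h))_{i=1}^m\). A zero tuple means that \(h\) witnesses faithful representability. Otherwise, pick \(i\) with \(\mindeg_i(h)<v_i(\tilde f)\); by the \(v_i\)\=/analogue of Lemma~\ref{lem:initial} one has \(\initial_i(h)\in J_i\) and so \(\initial_i(h)^\hom\in J_i^\hom\). If \(\initial_i(h)^\hom\in I^\hom+\langle t_i\rangle\), the construction in the proof of Lemma~\ref{lem:faithrep2} produces \(h'\) with \(\alpha(h')=\tilde f\), \(\mindeg_i(h')>\mindeg_i(h)\) and \(\mindeg_{i'}(h')\geq\mindeg_{i'}(h)\) for \(i'\neq i\), strictly decreasing the tuple. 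Otherwise the algorithm appends \(r_k\coloneqq h_k^\deh(f'_1,\dots,f'_l)\) for every generator \(h_k\) of \(J_i^\hom\setminus(I^\hom+\langle t_i\rangle)\); since \(h_k^\deh\in J_i=\ker\beta_i\), the \(\deg_i\)\=/minimal component of \(h_k^\deh\) evaluated at \(f'\) lies in \(F_{v_i\succ\deg_i(h_k)}\), yielding the strict inequality \(v_i(r_k)>\deg_i(h_k)\). The resulting relation \(X_{l+k}-h_k^\deh\in I^+\) in the enlarged ring allows one to splice \(X_{l+k}\) into \(h\) in place of the low\=/\(\deg_i\) contributions of \(\initial_i(h)\), producing after one pass of the \texttt{for}\=/loop a preimage of \(\tilde f\) with strictly larger \(\mindeg_i\). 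Well\=/foundedness of \(\ZZ_{\geq 0}^m\) then closes the induction.

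The main obstacle is the splicing step in the last case: one must verify carefully that replacing the low\=/\(\deg_i\) contributions of \(\initial_i(h)\) by the new variables \(X_{l+k}\) strictly raises \(\mindeg_i\) without lowering any \(\mindeg_{i'}\). This rests on the strict inequality \(v_i(r_k)>\deg_i(h_k)\) above, which is the MUVAK counterpart of the strict valuation increase that drives the termination argument of Proposition~\ref{prop:homkhovcorrect}.
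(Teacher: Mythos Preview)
Your proposal is correct and follows essentially the same route as the paper's proof. Both directions use Theorem~\ref{thm:muvak} and Lemma~\ref{lem:faithreptrans} exactly as you indicate, and the termination argument in the paper also proceeds by picking an index \(i\) with \(\mindeg_i(h)<v_i(\tilde f)\), showing that after the algorithm adds the elements \(\alpha(h_j^\deh)\) one has \(h_j\in (I^+)^\hom+\langle t_i\rangle\) for every generator \(h_j\) of \(J_i^\hom\) (via the relation \(h_j^\deh-X_{l+j}\in I^+\) and the strict inequality \(\deg_i(X_{l+j})=v_i(\alpha(h_j^\deh))>\deg_i(h_j)\) together with \(\deg_{i'}(X_{l+j})\geq\deg_{i'}(h_j)\) for \(i'\neq i\)), and then invoking the construction from Lemma~\ref{lem:faithrep2} to strictly raise \(\mindeg_i\) without lowering any \(\mindeg_{i'}\). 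The only cosmetic difference is that you package the termination as well\-/founded induction on the defect tuple \((v_i(\tilde f)-\mindeg_i(h))_i\) in \(\ZZ_{\geq 0}^m\), whereas the paper phrases it as repeatedly increasing \(\mindeg_i\); your ``splicing'' step is precisely the paper's computation \((h_j^\deh-X_{l+j})^\hom=h_j-X_{l+j}\theta\) with \(t_i\mid\theta\), which yields \(J_i^\hom\subseteq (I^+)^\hom+\langle t_i\rangle\) and hence \(\initial_i(h)^\hom\in (I^+)^\hom+\langle t_i\rangle\).
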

\begin{proof}
  Assume that Algorithm~\ref{alg:muvak} terminates with output \(\{f_1,\dots,f_l\}\).
  By construction, these polynomials are generators of \(A\) because the set contains the original generators.
  Further, the \(f_j\) are \(\Delta\)\=/homogeneous as the algorithm only adds homogeneous elements.
  If the algorithm terminates, we must have \(J_i^\hom \subseteq I^\hom + \langle t_i\rangle\) for all \(i\in\{1,\dots,m\}\) for the `updated' ideals corresponding to the \(f_j\).
  Hence \(\{f_1,\dots,f_l\}\) is a \(\Delta\)\=/homogeneous MUVAK basis with respect to \(v_1,\dots,v_m\) by Theorem~\ref{thm:muvak}.
  In particular, \(A\) admits such a finite basis.

  Conversely, assume that \(\{\tilde f_1,\dots,\tilde f_s\}\subseteq A\) is a finite \(\Delta\)\=/homogeneous MUVAK basis with respect to \(v_1,\dots,v_m\).
  If each \(\tilde f_i\), \(1\leq i\leq s\), is faithfully representable in the elements of the set \(\mathcal G\) at some point of the algorithm, then \(\mathcal G\) is a \(\Delta\)\=/homogeneous MUVAK basis of \(A\) with respect to \(v_1,\dots,v_m\) by Lemma~\ref{lem:faithreptrans} and the algorithm terminates by Theorem~\ref{thm:muvak}.
  It hence remains to show that \(\tilde f_1,\dots,\tilde f_s\) are faithfully representable in the elements of the set \(\mathcal G\) after finitely many iterations.

  Let \(i\in\{1,\dots,m\}\) such that at the beginning of the \(i\)\=/th iteration of the \texttt{for}\-/loop there is \(\sigma\in \{1,\dots, s\}\) and \(h\in \alpha^{-1}(\tilde f_\sigma)\) with \(\initial_i(h)^\hom\in J_i^\hom\setminus(I^\hom + \langle t_i\rangle)\).
  If no such \(i\) and \(\sigma\) exist, then \(\tilde f_1, \dots, \tilde f_s\) are faithfully representable in the elements of \(\mathcal G\) by Lemma~\ref{lem:faithrep2}.
  Write \(\mathcal G = \{f_1,\dots,f_l\}\) and \(J_i^\hom = \langle h_1,\dots,h_r\rangle\) as in the algorithm.
  After reordering, we may assume that \(h_1,\dots,h_{r'}\notin I^\hom + \langle t_i\rangle\) and \(h_{r' + 1},\dots,h_r\in I^\hom + \langle t_i\rangle\) for some \(r'\leq r\).
  The algorithm now adds \(\alpha(h_j^\deh)\) for \(j = 1,\dots,r'\) to the set \(\mathcal G\).
  Notice that \(h_j^\deh\in J_i\), so \(\mindeg_i(h_j^\deh) < v_i(\alpha(h^\deh))\) for \(j = 1,\dots,r'\) by Lemma~\ref{lem:initial}.
  Let \(\alpha^+, I^+, J_i^+\) and so on be the updated instances at the end of this iteration of the \texttt{for}\-/loop.
  We claim that \(h_1,\dots,h_r\in (I^+)^\hom + \langle t_i\rangle\).
  Let \(X_{l + 1},\dots,X_{l + r'}\) be the new variables corresponding to \(\alpha(h_1^\deh),\dots,\alpha(h_{r'}^\deh)\) respectively.
  Notice that \(\alpha^+(f') = \alpha(f')\) for any \(f'\in K[X_1,\dots,X_l]\) and in particular \(I\subseteq I^+\) by considering elements of \(K[X_1,\dots,X_l]\) as elements of \(K[X_1,\dots,X_{l + r'}]\) in the canonical way.
  For \(j\in \{1,\dots,r'\}\), we have by construction \(h_j^\deh - X_{l + j}\in I^+\), \(\deg_i(X_{l + j}) > \deg_i(h_j)\) and \(\deg_{i'}(X_{l + j}) \geq \deg_{i'}(h_j)\) for all \(i'\neq i\).
  Hence \((h_j^\deh - X_{l + j})^\hom = h_j - X_{l + j}\theta\) with \(\theta\) a monomial in \(t_1,\dots,t_m\) and \(t_i\mid \theta\).
  So, \(h_j = (h_j^\deh - X_{l + j})^\hom + X_{l + j}\theta \in (I^+)^\hom + \langle t_i\rangle\) as claimed.
  Then also \(\initial_i(h)^\hom\in (I^+)^\hom + \langle t_i\rangle\).
  Hence we may find a preimage \(h'\in\alpha^{-1}(\tilde f_\sigma)\) with \(\mindeg_i(h') > \mindeg_i(h)\) and \(\mindeg_{i'}(h') \geq \mindeg_{i'}(h)\) for \(i'\neq i\) by the same argument as in Lemma~\ref{lem:faithrep2}.
  Continuing with the argument as in Lemma~\ref{lem:faithrep2}, the algorithm will produce generators in which \(\tilde f_1,\dots,\tilde f_s\) are faithfully representable after finitely many steps.
\end{proof}

\begin{example}
  \label{ex:muvakrun4}
  This example is part of the series also containing \ref{ex:muvakrun1}, \ref{ex:muvakrun2} and \ref{ex:muvakrun3}.
  We use Algorithm~\ref{alg:muvak} to compute a MUVAK basis of \(A\).
  Let \(\mathcal G = \{f_1,\dots,f_4\}\).
  With Example~\ref{ex:muvakrun3}, we have the polynomials \(h_1 = X_3 - X_4t_2, h_2 = X_1^2 - X_2X_4t_2^2\notin I^\hom + \langle t_1\rangle\) and \(h_3 = X_1 - X_2t_1\notin I^\hom + \langle t_2\rangle\) (notice that the second generator of \(J_2^\hom\) is indeed an element of \(I^\hom + \langle t_2\rangle\)).
  These give the polynomials \[h_1^\deh(f_1,\dots,f_4) = h_3^\deh(f_1,\dots,f_4) = xy\text{ and }h_2^\deh(f_1,\dots,f_4) = x^4 + 2x^3y.\]
  For simplicity, we only add \(f_5 \coloneqq xy\) to the set of generators.
  Computing the new relations gives the ideals
  \begin{align*}
    I^\hom &= \langle X_3 - X_4t_2 - X_5t_1, X_1 - X_2t_1 - X_5t_2, X_2X_4 - X_5^2\rangle,\\
    J_1^\hom &= \langle X_3 - X_4t_2, X_1 - X_5t_2, X_2X_4 - X_5^2\rangle, \\
    J_2^\hom &= \langle X_3 - X_5t_1, X_1 - X_2t_1, X_2X_4 - X_5^2\rangle,
  \end{align*}
  where the homogenization is now with respect to the matrix \[W = \begin{pmatrix} 1 & 2 & 0 & 0 & 1 \\ 0 & 0 & 1 & 2 & 1 \end{pmatrix}.\]
  We see that now \(J_i^\hom \subseteq I^\hom + \langle t_i\rangle\) and the algorithm terminates with the MUVAK basis \(\{f_1,\dots,f_5\}\) for \(A\).
\end{example}

\section{Examples}
\label{sec:ex}

We give further examples of homogeneous Khovanskii bases before we come to our main application in section~\ref{sec:cox}.
Although everything in this section can be computed by hand, our experiments in the computer algebra system OSCAR \cite{Osc, DEFHJ25} were invaluable to find these examples.

\begin{example}
  \label{ex:nosub}
  We give an example where Algorithm~\ref{alg:homkhov} is unable to find a Khovanskii basis because Algorithm~\ref{alg:homsubduc} does not terminate.
  Let \(K\) be a field, \(B = K[x, y]\) the polynomial ring and \(v:B\setminus\{0\}\to\ZZ\) be the order of divisibility by \(y\) with the natural ordering on \(\ZZ\).
  We have \(\gr_v(B) = B.\)
  Let \(f_1 \coloneqq x, f_2 \coloneqq x + y, f_3 \coloneqq y + y^2\in B\) and \(A = K[f_1, f_2, f_3]\).
  The grading is unimportant for this example and we just put \(\Delta \coloneqq \{0\}\).
  Clearly, \(A = B\) and a Khovanskii basis of \(A\) with respect to \(v\) is given by \(\{x, y\}\).

  If we run Algorithm~\ref{alg:homkhov}, we compute:
  \[\begin{array}{c|c}
    & \initial_v^{\{0\}} \\
    \hline
    f_1 & x \\
    f_2 & x \\
    f_3 & y
  \end{array}\]
  The algorithm then calls the subduction algorithm to reduce \(f_4\coloneqq f_1 - f_2 = -y\) by \(\{f_1,f_2, f_3\}\).
  This will enter an infinite loop by replacing \(y\) by \(y^2\), \(y^3\) and so on.

  Let us run this example with Algorithm~\ref{alg:muvak} and compute a homogeneous MUVAK basis with respect to \(v\).
  In the first iteration, we compute the ideals \[I^\hom = \langle X_1^2 - 2X_1X_2 - X_1 + X_2^2 + X_2 - X_3t\rangle\text{ and }J^\hom = \langle X_1 - X_2\rangle.\]
  Because \(X_1 - X_2 \notin I^\hom + \langle t\rangle\), the algorithm adds the polynomial \(f_4 = f_1 - f_2 = -y\) to the generating set.
  The updated ideals are now \[I^\hom = \langle X_1 - X_2 - X_4t, X_3 + X_4 - X_4^2t \rangle\text{ and }J^\hom = \langle X_1 - X_2, X_3 + X_4\rangle\] and one sees that \(J^\hom \subseteq I^\hom + \langle t \rangle\) as is required for the termination of the algorithm.
  So, the algorithm produced the MUVAK basis \(\{f_1,f_2, f_3,f_4\}\) with respect to \(v\).
\end{example}

The next examples show how the cardinality of a homogeneous Khovanskii basis depends on the chosen grading.
Notice that any subgroup of \(\Delta\) is a normal subgroup as \(\Delta\) is abelian.
For a subgroup \(\Theta\leq\Delta\), we endow \(A\) with a `coarsened' grading by \(\Theta\), that is, \[A_\theta \coloneqq \bigoplus_{\delta\in[\theta]}A_\delta\] for all \(\theta\in \Theta\), where \([\theta]\) denotes the class of \(\theta\) in \(\Delta/\Theta\).
\begin{lemma}
  Let \(\Theta\leq \Delta\) be a subgroup and let \(\{f_1,\dots,f_k\}\) be a \(\Theta\)\=/homogeneous Khovanskii basis of \(A\) with respect to \(v\).
  If \(f_1,\dots,f_k\) are \(\Delta\)\=/homogeneous, then \(\{f_1,\dots,f_k\}\) is a \(\Delta\)\=/homogeneous Khovanskii basis of \(A\) with respect to \(v\).
\end{lemma}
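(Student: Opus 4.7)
The plan is to compare the ideal characterisations of a homogeneous Khovanskii basis with respect to the two gradings, using Theorem~\ref{thm:surjeqeq}. Let \(R \coloneqq K[X_1,\dots,X_k]\), graded by \(\Delta\) via \(\deg_\Delta(X_i) \coloneqq \deg_\Delta(f_i)\) and by \(\Gamma\) via \(\deg_\Gamma(X_i) \coloneqq v(f_i)\), and consider the three \(K\)\=/algebra morphisms \(\alpha\colon R \to A\), \(\beta_\Delta\colon R \to \initial_v^\Delta(A)\) and \(\beta_\Theta\colon R \to \initial_v^\Theta(A)\) sending \(X_i\) to \(f_i\), \(\initial_v^\Delta(f_i)\) and \(\initial_v^\Theta(f_i)\) respectively. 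Write \(I \coloneqq \ker(\alpha)\), \(J_\Delta \coloneqq \ker(\beta_\Delta)\) and \(J_\Theta \coloneqq \ker(\beta_\Theta)\).

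The canonical morphism of group algebras \(K\Delta \to K(\Delta/\Theta)\) extends to a \(\gr_v(B)\)\=/algebra morphism \(\pi\colon \gr_v(B)\otimes_K K\Delta \to \gr_v(B)\otimes_K K(\Delta/\Theta)\) through which \(\beta_\Theta\) factors as \(\beta_\Theta = \pi \circ \beta_\Delta\); hence \(J_\Delta \subseteq J_\Theta\). Combining this with the inclusion \(\initial_\Gamma(I) \subseteq J_\Delta\) provided by Lemma~\ref{lem:inIsubJw} and with the equality \(\initial_\Gamma(I) = J_\Theta\) provided by the hypothesis together with the implication (i)\(\Rightarrow\)(ii) of Theorem~\ref{thm:surjeqeq} applied to the \(\Theta\)\=/grading yields \(\initial_\Gamma(I) \subseteq J_\Delta \subseteq J_\Theta = \initial_\Gamma(I)\), hence \(J_\Delta = \initial_\Gamma(I)\). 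A final application of Theorem~\ref{thm:surjeqeq} to the \(\Delta\)\=/grading, this time in the direction (ii)\(\Rightarrow\)(i), then gives the claim.

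The main obstacle is that both applications of Theorem~\ref{thm:surjeqeq} require the termination of Algorithm~\ref{alg:homsubduc} for \(A\), in the \(\Delta\)\=/graded as well as in the \(\Theta\)\=/graded setting. This should be understood as a tacit standing hypothesis of the lemma; it is fulfilled, for example, via Proposition~\ref{prop:deltafindim} whenever the \(\Theta\)\=/coarsened graded components of \(A\) are still finite\-/dimensional, which is automatic when \(\Theta\) is a finite subgroup of \(\Delta\).
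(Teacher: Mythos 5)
Your proof is correct and takes essentially the same route as the paper: both deduce the claim from Theorem~\ref{thm:surjeqeq} by observing that $\initial_\Gamma(I)$ is grading-independent, that $\initial_\Gamma(I)\subseteq J_\Delta\subseteq J_\Theta$, and that the hypothesis forces $\initial_\Gamma(I)=J_\Theta$. Your write-up is somewhat more explicit than the paper's (spelling out the factorisation through $K(\Delta/\Theta)$ and flagging the tacit termination hypothesis needed to invoke Theorem~\ref{thm:surjeqeq}), but the underlying argument is identical.
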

\begin{proof}
  This follows directly from Theorem~\ref{thm:surjeqeq}.
  Write \(J_\Delta\), respectively \(J_\Theta\), for the ideal \(J\) coming from the respective grading.
  By construction, we have \(J_\Delta \subseteq J_\Theta\) (any \(\Delta\)\=/homogeneous relations must be \(\Theta\)\=/homogeneous).
  The ideal \(\initial_\Gamma(I)\) in the theorem is independent of the grading by \(\Delta\) or \(\Theta\) and we have by assumption \(\initial_\Gamma(I) = J_\Theta\) as well as \(\initial_\Gamma(I) \subseteq J_\Delta\).
  Hence \(\initial_\Gamma(I) = J_\Delta\) and Theorem~\ref{thm:surjeqeq} gives the claim.
\end{proof}

\begin{example}
  \label{ex:gradC2}
  Let \(K\) be a field of characteristic different from 2 and let \(B = K[x, y]\).
  We endow \(B\) with a grading via the weight vector \(w = (1, 0)\in\ZZ^2\), so we put \(\deg_w(x) \coloneqq 1\) and \(\deg_w(y) \coloneqq 0\).
  As valuation \(v:B\setminus\{0\}\to \ZZ\), we consider the \emph{grading function} (see \cite[Example~2.2]{KM19}) of this grading, that is, for \(f\in B\setminus\{0\}\) we set \(v(f) = \min\{\deg_w(t)\mid t\in \Supp(f)\}\).
  In particular, we have \(\gr_v(B) \cong B\).

  Let \(f_1 \coloneqq x^2 + y^2\), \(f_2 \coloneqq x^2 - y^2\) and \(f_3 \coloneqq xy\).
  We want to compute a \(\Delta\)\=/homogeneous Khovanskii basis for the subalgebra \(A \coloneqq K[f_1,f_2,f_3]\leq B\) which is graded by \(\Delta \coloneqq (\ZZ/2\ZZ)^2\times \ZZ\) via \(\deg_\Delta(f_1) = (0, 0, 2)\), \(\deg_\Delta(f_2) = (1, 0, 2)\) and \(\deg_\Delta(f_3) = (0, 1, 2)\).
  By Proposition~\ref{prop:deltafindim}, Algorithm~\ref{alg:homkhov} is applicable because the grading refines the standard grading.

  To compute the initial forms, write \(K\Delta = K[g_1,g_2, g_3]/\langle g_1^2 - 1, g_2^2 - 1\rangle\).
  Then we have
  \[\def\arraystretch{1.2}\begin{array}{c|c|c}
    & \initial_v & \initial_v^\Delta \\
    \hline
    f_1 & y^2 & y^2g_3^2 \\
    f_2 & -y^2 & -y^2g_1g_3^2 \\
    f_3 & xy & xyg_2g_3^2
  \end{array}\]
  The kernel of the morphism \[K[X_1,X_2,X_3]\to B\otimes_KK\Delta,\ X_i\mapsto\initial_v^\Delta(f_i)\] is given by \(J = (X_1^2 - X_2^2)\).
  Hence the algorithm would check the potential new element \(f_1^2 - f_2^2 = 4x^2y^2\) with \(\initial_v^\Delta(4x^2y^2) = 4x^2y^2\) and the subduction algorithm rewrites this as \(f_1^2 - f_2^2 = 4f_3^2\).
  So, \(\{f_1,f_2,f_3\}\) is a \(\Delta\)\=/homogeneous Khovanskii basis of \(A\).
\end{example}

\begin{example}
  \label{ex:gradZ}
  Let us consider the same algebras \(A\) and \(B\) and the same valuation \(v\) as in Example~\ref{ex:gradC2}, but this time we take only the standard grading of \(A\) (and \(B\)) into account and compute a \(\ZZ\)\=/homogeneous Khovanskii basis of \(A\) with respect to \(v\).
  We will see that the polynomials \(f_1,f_2,f_3\) do not form a \(\Delta\)\=/homogeneous Khovanskii basis in this case.

  We write \(K\ZZ = K[g]\) for the group ring and compute as above:
  \[\def\arraystretch{1.2}\begin{array}{c|c|c}
    & \initial_v & \initial_v^\ZZ \\
    \hline
    f_1 & y^2 & y^2g^2 \\
    f_2 & -y^2 & -y^2g^2 \\
    f_3 & xy & xyg^2
  \end{array}\]
  In this case, the kernel of \[K[X_1,X_2,X_3]\to B\otimes_KK\Delta,\ X_i\mapsto\initial_v^\Delta(f_i)\] is given by \(J = (X_1 + X_2)\).
  The subduction algorithm cannot reduce the element \(f_4 \coloneqq f_1 + f_2 = 2x^2\) with \(\initial_v^\ZZ(f_4) = 2x^2g^2\), so we add \(f_4\) to the basis.

  We now compute \(J = (X_1 + X_2, X_2X_4 + 2X_3^2)\) as kernel of \[K[X_1,\dots,X_4]\to B\otimes_KK\Delta,\ X_i\mapsto\initial_v^\Delta(f_i).\]
  Evaluating the new generator gives \(f_5 \coloneqq f_2f_4 + 2f_3^2 = 2x^4\) with \(\initial_v^\ZZ(f_5) = 2x^4g^4\) and the subduction algorithm finds the representation \(f_5 = \frac{1}{2}f_4^2\).
  We conclude that \(\{f_1,\dots, f_4\}\) is a \(\ZZ\)\=/homogeneous Khovanskii basis for \(A\) and \(v\).
\end{example}

\begin{example}
  \label{ex:inf}
  The situation described in Examples~\ref{ex:gradC2} and \ref{ex:gradZ} can be even more jarring:
  We now give an example for a subalgebra which has a finite homogeneous Khovanskii basis with respect to a certain grading, but does not have a finite Khovanskii basis with  respect to another one.

  For this, we adapt \cite[Example~6.6.7]{KR05}.
  There, Kreuzer and Robbiano show that the subalgebra generated by \(x_1 + x_2, x_1x_2, x_1x_2^2\in K[x_1,x_2]\) has no finite SAGBI basis, no matter which monomial ordering is used on \(K[x_1, x_2]\).
  We choose a slightly different set of polynomials to produce the situation we aim for.
  Let \(K\) be a field and let \(B = K[x, y, z]\).
  Let \(>_1\) be the negative lexicographical ordering on \(z\), that is, induced by \(1 >_1 z\), and let \(>_2\) be a monomial ordering on the variables \(x, y\) with \(x >_2 y\).
  Let now \(>\;\coloneqq (>_1, >_2)\) be the block ordering, which gives a total ordering on all monomials of \(B\).
  Note that \(>_1\) is a local ordering (in the terminology of \cite{GP08}) and so \(>\) is out of scope of the orderings considered in \cite{KR05} for SAGBI bases.
  Nevertheless, there is a valuation \(v:B\setminus\{0\}\to\ZZ^3\) induced by \(>\) as in Remark~\ref{rem:sagbi} and we can ask for Khovanskii bases with respect to \(v\).
  We again have \(\gr_v(B) \cong B\).
  Let finally \(f_1 \coloneqq x + y + z\), \(f_2\coloneqq xy\) and \(f_3 \coloneqq xy^2\) and put \(A\coloneqq K[f_1,f_2,f_3]\leq B\).
  The polynomials \(f_1\), \(f_2\) and \(f_3\) are algebraically independent and we may endow \(A\) with a grading by \(\Delta\coloneqq\ZZ^3\) by putting \(\deg_\Delta(f_i) = e_i\), where \(e_i\in \ZZ^3\) denotes the element with entry 1 in position \(i\) and 0 everywhere else.
  Then we have the group ring \(K\Delta = K[g_1, g_2, g_3]\) and one computes:
  \[\def\arraystretch{1.2}\begin{array}{c|c|c}
    & \initial_v & \initial_v^\Delta \\
    \hline
    f_1 & x & xg_1 \\
    f_2 & xy & xyg_2 \\
    f_3 & xy^2 & xy^2g_3
  \end{array}\]
  Hence there is no relation between the \(\initial_v^\Delta(f_i)\), \(i = 1, 2, 3\), and a \(\Delta\)\=/homogeneous Khovanskii basis of \(A\) with respect to \(v\) is given by \(\{f_1,f_2,f_3\}\) (Algorithm~\ref{alg:homkhov} is applicable by Proposition~\ref{prop:deltafindim}).

  We now consider the standard grading by \(\ZZ\) on \(A\) and prove in the following that there is no finite \(\ZZ\)\=/homogeneous Khovanskii basis of \(A\).
  Let \[\rho:B\to K[x, y, z]/\langle z\rangle \cong K[x, y]\] be the canonical projection.
  Then \(\rho(A)\) is the algebra considered in \cite[Example~6.6.7]{KR05}.
  Further, by construction of \(>\), we have \(\LM_{>_2}(\rho(f)) = \rho(\LM_>(f))\).
  Hence \(\rho\) induces a surjection \(\initial_v^\ZZ(A) \to \initial_{v_2}^\ZZ(\rho(A))\) where \(v_2\) is the valuation on \(K[x, y]\) induced by \(>_2\).
  Kreuzer and Robbiano prove that there is an isomorphism \(\initial_{v_2}^\ZZ(\rho(A)) \cong K[x, xy, xy^2, \dots]\) and conclude that \(\initial_{v_2}^\ZZ(\rho(A))\) is not a finitely generated algebra.
  Precisely, the monomial \(xy^d\) is not in the algebra \(K[x, xy, \dots, xy^{d - 1}]\) for every \(d\geq 1\).
  The same holds true for \(\initial_v^\ZZ(\rho(A))\) because the monomials in \(\initial_v^\ZZ(\rho(A))\) which are not in \(\initial_{v_2}^\ZZ(\rho(A))\) are all divisible by \(z\).
  In conclusion, there is no finite \(\ZZ\)\=/homogeneous Khovanskii basis for \(A\) with respect to \(v\).
\end{example}

\section{Cox rings of minimal models of quotient singularities}
\label{sec:cox}

We describe an application of homogeneous MUVAK bases, namely the computation of \emph{Cox rings} of certain varieties.
The Cox ring of a toric variety was introduced by Cox \cite{Cox95} and then carried over to the setting of birational geometry by Hu and Keel \cite{HK00}.
The definition of the Cox ring \(\mathcal R(X)\) of a normal variety \(X\) is quite involved.
We just point out that we have \[\mathcal R(X) = \bigoplus_{[D]\in\Cl(X)}\Gamma(X,\mathcal O_X(D)),\] if the class group \(\Cl(X)\) of \(X\) is free.
See \cite[Section~1.4]{ADHL15} for the general case and a profound reference for the topic in general.
In the following, we are interested in Cox rings arising in the context of quotient singularities.

\subsection{Quotient singularities}
\label{subsec:quotsing}
Throughout this section, let \(V\) be a finite\-/dimensional vector space over \(\CC\) and let \(G\leq\SL(V)\) be a finite group.
We write \(V/G\coloneqq \Spec\CC[V]^G\) for the linear quotient of \(V\) by \(G\), where \(\CC[V]^G\) is the invariant ring of \(G\).
By the classical theorem of Chevalley--Serre--Shephard--Todd \cite[Théorème~1']{Ser68}, the variety \(V/G\) is singular and we hence refer to it as a \emph{quotient singularity}.

We are interested in the birational geometry of \(V/G\).
In the following, we give a terse summary of the steps that lead us to a certain Cox ring; we refer the reader to the given references for more details.
By deep results of the minimal model programme \cite{BCHM10}, there exists a \emph{minimal model}, or more precisely a \emph{\(\QQ\)\=/factorial terminalization}, \(X\to V/G\) of \(V/G\), see \cite[Theorem~2.1.15]{Sch23} for details of how this follows from \cite[Corollary~1.4.3]{BCHM10}.
Further, this minimal model turns out to be a \emph{(relative) Mori dream space} \cite[Theorem~3.4.10]{Gra19}.
Algebraically, this means that the Cox ring \(\mathcal R(X)\) is a finitely generated \(\CC\)\=/algebra.
This makes it accessible from an algorithmic point of view.

Our interest in computing the ring \(\mathcal R(X)\) is motivated by the aim to have an `algorithmic minimal model programme':
The idea is to first compute \(\mathcal R(X)\) \emph{without} prior knowledge of \(X\) and then recover \(X\), and in fact any minimal model of \(V/G\), from \(\mathcal R(X)\).
See \cite[Section~7]{Sch23} for a proof of concept of this idea.
The setting of quotient singularities appears to be a natural first step to implement the ideas discussed in a much more general context in \cite{Laz24} because of the additional structure coming from the group action.

In \cite{Yam18, Gra19}, Yamagishi and Grab construct generators of \(\mathcal R(X)\) via a homogeneous MUVAK basis of \(\mathcal R(V/G)\), where \(X\to V/G\) is a minimal model of a quotient singularity as before.
We now recall this result in the language of this article.

To describe the setting, we start with the Cox ring \(\mathcal R(V/G)\) of \(V/G\).
By a theorem of Arzhantsev--Ga\v{\i}fullin \cite[Theorem~3.1]{AG10}, we have \(\mathcal R(V/G)\cong \CC[V]^{[G,G]}\), where \([G,G]\) denotes the commutator subgroup of \(G\).
The ring \(\mathcal R(V/G)\) is graded by the class group \(\Cl(V/G)\) and we have \(\Cl(V/G) = \Hom(G, \CC^\times)\) by \cite[Theorem~3.9.2]{Ben93}.
Let \(\Ab(G)\coloneqq G/[G,G]\) be the abelianization of \(G\) and write \(\Ab(G)^\vee\) for the group of characters of \(\Ab(G)\).
By elementary character theory, there is an isomorphism \(\Hom(G,\CC^\times)\cong \Ab(G)^\vee\).
The isomorphism \(\mathcal R(V/G)\cong \CC[V]^{[G,G]}\) is \(\Ab(G)^\vee\)\=/graded, where the grading on \(\CC[V]^{[G,G]}\) is coming from the action of \(\Ab(G)\), that is, \[\CC[V]^{[G,G]}_\chi \coloneqq \{f\in \CC[V]^{[G,G]}\mid \gamma.f = \chi(\gamma)f \text{ for all }\gamma\in \Ab(G)\}\] for every \(\chi\in\Ab(G)^\vee\).

The ring \(\mathcal R(X)\) is connected to \(\mathcal R(V/G)\) via the injective morphism \[\Theta:\mathcal R(X) \to \mathcal R(V/G)\otimes_\CC\CC[\Cl(X)^{\mathrm{free}}] \cong \big(\CC[V]^{[G,G]}\big)[t_1^{\pm1},\dots,t_m^{\pm 1}],\] where \(\Cl(X)^{\mathrm{free}}\) is the free part of the class group of \(X\) and \(m\in\ZZ_{\geq 0}\) the rank of this group, so \(\Cl(X)^{\mathrm{free}} \cong \ZZ^m\) \cite[Proposition~4.1.5]{Gra19}.
The idea to study the morphism \(\Theta\) originated to our knowledge in \cite{Don16} and was extended in \cite{DG16}, \cite{DW17}, \cite{Yam18} and \cite{Gra19}; see also \cite[Section~6.1]{Sch23} for a more detailed overview.

We can now relate generators of \(\mathcal R(X)\) to a homogeneous MUVAK basis of \(\CC[V]^{[G,G]}\).
For this, there are \(m\) valuations \(v_1,\dots,v_m:\CC[V]\setminus\{0\}\to \ZZ\) that are defined solely by the action of \(G\) on \(V\) and originate in the context of McKay correspondence, see \cite{IR96}.

The following theorem appears in \cite[Proposition~4.4]{Yam18} and \cite[Theorem~4.1.15]{Gra19}.
The integers \(r_j\in\ZZ_{\geq 0}\) in the theorem are the orders of certain elements of \(G\) (\emph{junior elements}) corresponding to the valuations \(v_j\), see again \cite{IR96} for details.
\begin{theorem}[Yamagishi, Grab]
  Let \(f_1,\dots, f_k\in\CC[V]^{[G,G]}\) be \(\Ab(G)^\vee\)\=/homogeneous generators.
  Then the Cox ring \(\mathcal R(X)\) is generated by the preimages of
  \[f_i\otimes\prod_{j = 1}^mt_j^{v_j(f_i)},\ 1\leq i\leq k, \text{ and }1\otimes t_j^{-r_j},\ 1\leq j\leq m,\]
  under \(\Theta\) if and only if \(\{f_1,\dots, f_k\}\) is a \(\Ab(G)^\vee\)\=/homogeneous MUVAK basis of \(\CC[V]^{[G,G]}\) with respect to \(v_1,\dots,v_m\).
\end{theorem}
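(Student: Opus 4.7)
The plan is to reduce the theorem to the known description of the image of $\Theta$. By the work of Ito--Reid and Donten-Bury, and ultimately by \cite{Yam18, Gra19}, the image $\Theta(\mathcal R(X))\subseteq \bigl(\CC[V]^{[G,G]}\bigr)[t_1^{\pm 1},\ldots,t_m^{\pm 1}]$ is precisely the $\CC$-subalgebra generated by all elements $f\otimes\prod_j t_j^{v_j(f)}$ with $f\in \CC[V]^{[G,G]}$ $\Ab(G)^\vee$-homogeneous, together with the elements $1\otimes t_j^{-r_j}$ for $1\leq j\leq m$. I would begin by recalling this description precisely; the theorem then reduces to a purely algebraic equivalence, namely that the subset of these generators attached to $\{f_1,\ldots,f_k\}$ generates $\Theta(\mathcal R(X))$ if and only if the $f_i$ form a MUVAK basis.

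A crucial preliminary observation is that for every $\Ab(G)^\vee$-homogeneous $f$ of degree $\chi$, the residue $v_j(f)\bmod r_j$ depends only on $\chi$: it equals, up to a sign convention, $\chi$ evaluated on the junior element associated with $v_j$. Consequently, whenever $f$ decomposes as a sum $f = \sum_\tau \tau$ of $\Ab(G)^\vee$-homogeneous terms of the same degree, each difference $v_j(\tau)-v_j(f)$ is automatically a multiple of $r_j$. This congruence is the bridge between the MUVAK inequalities and the powers of $1\otimes t_j^{-r_j}$ used to adjust $t_j$-exponents.

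For the ``if'' direction, assume $\{f_1,\ldots,f_k\}$ is a MUVAK basis; it is enough to express every generator $f\otimes\prod_j t_j^{v_j(f)}$ in terms of the listed elements. The MUVAK property provides $h\in\CC[X_1,\ldots,X_k]$ with $h(f_1,\ldots,f_k)=f$ and $v_j(t(f_1,\ldots,f_k))\geq v_j(f)$ for every term $t$ of $h$ and every $j$. A single such term $c\prod_i f_i^{\alpha_i}$ contributes
\[
c\prod_i f_i^{\alpha_i}\otimes\prod_j t_j^{v_j(f)}
  = c\prod_i\Bigl(f_i\otimes \prod_j t_j^{v_j(f_i)}\Bigr)^{\alpha_i}\cdot\prod_j t_j^{v_j(f)-\sum_i\alpha_i v_j(f_i)},
\]
and by the preliminary observation combined with the MUVAK inequality, the trailing $t$-factor is a non-negative integer power of $\prod_j(1\otimes t_j^{-r_j})$, hence lies in the subalgebra generated by the listed elements.

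For the ``only if'' direction, suppose the listed elements generate $\Theta(\mathcal R(X))$, and let $f\in\CC[V]^{[G,G]}$ be $\Ab(G)^\vee$-homogeneous. Then $f\otimes\prod_j t_j^{v_j(f)}\in\Theta(\mathcal R(X))$, so it admits an expression as a polynomial in the listed generators. Collecting the monomial contributions by their underlying product of the $f_i$ yields an equation $f = h(f_1,\ldots,f_k)$; since all monomial contributions must share the $t_j$-exponent of the target, reading off these exponents term-by-term produces $v_j(t(f_1,\ldots,f_k))\geq v_j(f)$ for every term $t$ of $h$, which is the MUVAK condition. The main obstacle lies in the preliminary congruence fact about $v_j\bmod r_j$ and in matching sign and grading conventions between the present setup and the McKay-correspondence literature; once this bookkeeping is in place, both implications become direct translations.
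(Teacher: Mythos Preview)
The paper does not supply its own proof of this theorem: it is quoted verbatim from \cite[Proposition~4.4]{Yam18} and \cite[Theorem~4.1.15]{Gra19} and used as a black box. There is therefore no argument in the paper to compare your proposal against.

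That said, your sketch is essentially the argument one finds in the cited references, and the two ingredients you single out are indeed the load-bearing ones: (i) the description of $\Theta(\mathcal R(X))$ as the subalgebra generated by all $f\otimes\prod_j t_j^{v_j(f)}$ for $\Ab(G)^\vee$-homogeneous $f$ together with the $1\otimes t_j^{-r_j}$, and (ii) the congruence $v_j(f)\equiv v_j(f')\pmod{r_j}$ whenever $f,f'$ share the same $\Ab(G)^\vee$-degree, which comes from the Ito--Reid description of the monomial valuations attached to junior elements. Once these are in place, both implications are exactly the translations you describe. Two minor points of precision: in the ``if'' direction, the trailing $t$-factor is a product $\prod_j(1\otimes t_j^{-r_j})^{n_j}$ with each $n_j\geq 0$, not a single power of the product over all $j$; and in the ``only if'' direction you may, after passing to the $t$-homogeneous component of degree $(v_1(f),\dots,v_m(f))$, observe that the constraints $\sum_i\alpha_i v_j(f_i)-r_j\beta_j=v_j(f)$ determine $\beta$ from $\alpha$, so the polynomial $h$ is obtained simply by reading off the exponent vectors $\alpha$ and their coefficients, not by ``collecting by underlying product of the $f_i$''. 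Neither of these affects the validity of the argument.
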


\subsection{Comparison to Yamagishi's algorithm}
\label{subsec:comp}

By \cite[Corollary~4.13]{Yam18}, a finite \(\Ab(G)^\vee\)\=/homogeneous MUVAK basis of \(\CC[V]^{[G,G]}\) with respect to \(v_1,\dots,v_m\) exists.
Hence, by Proposition~\ref{prop:muvak}, Algorithm~\ref{alg:muvak} is applicable to compute such a basis.

If there is only one valuation \(v = v_1\), then a MUVAK basis is a Khovanskii basis by Theorem~\ref{thm:muvakkhov} and we may also use Algorithm~\ref{alg:homkhov} by virtue of Proposition~\ref{prop:homkhovcorrect} assuming that the subduction algorithm always terminates.
This is indeed the case: by the following Remarks~\ref{rem:stdhom1} and \ref{rem:stdhom2}, we may replace the grading group \(\Ab(G)^\vee\) by \(\Ab(G)^\vee\times \ZZ\), where the graded components are exactly the intersections of the components with respect to the grading by \(\Ab(G)^\vee\) and the grading by the standard degree.
This extension of the grading group guarantees the termination of the subduction algorithm by Proposition~\ref{prop:deltafindim}.

\begin{remark}
  \label{rem:stdhom1}
  Notice that \(\CC[V]^{[G,G]}\) is a standard graded ring and that the action of \(\Ab(G)\) on \(\CC[V]^{[G,G]}\) is linear, so preserves the standard degree.
  In consequence, the grading by \(\Ab(G)^\vee\) is well\-/behaved in the sense that we may decompose any \(\Ab(G)^\vee\)\=/homogeneous polynomial \(f\in \CC[V]^{[G,G]}\) as a sum of standard homogeneous polynomials which are \(\Ab(G)^\vee\)\=/homogeneous as well.
\end{remark}

\begin{remark}
  \label{rem:stdhom2}
  Given two standard homogeneous polynomials \(f_1, f_2\in \CC[V]\), we have \[v(f_1 + f_2) = \min\{v(f_1), v(f_2)\}\] by construction of the valuation \(v\), see \cite{IR96}.
\end{remark}

In \cite{Yam18}, Yamagishi gives an algorithm that computes a \(\Ab(G)^\vee\)\=/homogeneous MUVAK basis of \(\CC[V]^{[G,G]}\) with respect to \(v_1,\dots,v_m\).
In broad strokes, Yamagishi's algorithm first adds generators until the set of generators is a MUVAK basis for every valuation \(v_i\) \emph{individually}, that is, we have generators \(f_1,\dots,f_k\in\CC[V]^{[G,G]}\) such that for all \(f\in \CC[V]^{[G,G]}\) there are polynomials \(h_1,\dots, h_m\in\CC[X_1,\dots, X_k]\) with \(h_i(f_1,\dots,f_k) = f\) and \(v_i(t(f_1,\dots,f_k)) \geq v_i(f)\) for all \(t\in \Supp(h_i)\) and \(1\leq i\leq m\).
Afterwards, the algorithm considers pairs, triples, and so on, of valuations to ensure that eventually one has a MUVAK basis with respect to \(v_1,\dots,v_m\), that is, that for all \(f\in\CC[V]^{[G,G]}\) there is \(h\in \CC[X_1,\dots, X_k]\) with \(h(f_1,\dots,f_k) = f\) and \(v_i(t(f_1,\dots,f_k)) \geq v_i(f)\) for all \(t\in \Supp(h)\) and \(1\leq i\leq m\).

\begin{table}[bp]
  \caption{Runtimes of computing a \(\Ab(G)^\vee\)\=/homogeneous MUVAK basis of \(\CC[V]^{[G,G]}\) with respect to the valuations arising from McKay correspondence (all times in seconds)}
  \label{tab:timings}
  \begin{tabular}{l|c|S|S|S}
    Group & Number of & {Yamagishi's} & {Khovanskii basis} & {MUVAK basis} \\
    & valuations & {algorithm \cite{Yam18}} & {(Algorithm~\ref{alg:homkhov})} & {(Algorithm~\ref{alg:muvak})} \\
    \hline
    \(G(2, 1, 2)^\circledast\)   & 2 & \tablenum{0.13}  & {n/a}           & \tablenum{0.10} \\
    \(G(3, 1, 2)^\circledast\)   & 3 & \tablenum{1.30}  & {n/a}           & \tablenum{0.87} \\
    \(G(3, 3, 2)^\circledast\)   & 1 & \tablenum{0.15}  & \tablenum{0.14} & \tablenum{0.18} \\
    \(G(4, 1, 2)^\circledast\)   & 4 & \tablenum{10.9}  & {n/a}           & \tablenum{5.0} \\
    \(G(4, 2, 2)^\circledast\)   & 3 & \tablenum{0.28}  & {n/a}           & \tablenum{0.23} \\
    \(G(4, 4, 2)^\circledast\)   & 2 & \tablenum{0.15}  & {n/a}           & \tablenum{0.11} \\
    \(G(5, 1, 2)^\circledast\)   & 5 & \tablenum{180}   & {n/a}           & \tablenum{21.0} \\
    \(G(5, 5, 2)^\circledast\)   & 1 & \tablenum{5.37}  & \tablenum{1.68} & \tablenum{4.12} \\
    \(G(6, 1, 2)^\circledast\)   & 6 & \tablenum{1165}  & {n/a}           & \tablenum{107} \\
    \(G(6, 2, 2)^\circledast\)   & 4 & \tablenum{2.62}  & {n/a}           & \tablenum{2.10} \\
    \(G(6, 3, 2)^\circledast\)   & 2 & \tablenum{106}   & {n/a}           & \tablenum{4.39} \\
    \(G(6, 6, 2)^\circledast\)   & 2 & \tablenum{0.53}  & {n/a}           & \tablenum{0.34} \\
    \(G(7, 1, 2)^\circledast\)   & 7 & \tablenum{29253} & {n/a}           & \tablenum{2322} \\
    \(G(7, 7, 2)^\circledast\)   & 1 & \tablenum{145}   & \tablenum{4.89} & \tablenum{153} \\
    \(G(8, 2, 2)^\circledast\)   & 5 & \tablenum{31.7}  & {n/a}           & \tablenum{16.9} \\
    \(G(8, 4, 2)^\circledast\)   & 3 & \tablenum{13.4}  & {n/a}           & \tablenum{5.11} \\
    \(G(8, 8, 2)^\circledast\)   & 2 & \tablenum{2.94}  & {n/a}           & \tablenum{1.34} \\
    \(G(9, 9, 2)^\circledast\)   & 1 & \tablenum{4748}  & \tablenum{14.9} & \tablenum{5978} \\
    \(G(10, 10, 2)^\circledast\) & 2 & \tablenum{18.4}  & {n/a}           & \tablenum{3.51} \\
    \(G_4^\circledast\)          & 2 & \tablenum{50.0}  & {n/a}           & \tablenum{12.5} \\
    \(G_5^\circledast\)          & 4 & \tablenum{287}   & {n/a}           & \tablenum{109} \\
    \(G_6^\circledast\)          & 3 & \tablenum{492}   & {n/a}           & \tablenum{42.9} \\
    \(G_7^\circledast\)          & 5 & \tablenum{970}   & {n/a}           & \tablenum{495} \\
    \(G_{12}^\circledast\)       & 1 & \tablenum{>36000}& \tablenum{415}  & \tablenum{>36000}\\
  \end{tabular}
\end{table}

We implemented the algorithms in this paper in the computer algebra system OSCAR \cite{Osc, DEFHJ25}.
This implementation is freely available online at \begin{center}\url{https://gitlab.com/math5724907/homogeneouskhovanskii}.\end{center}
We also implemented Yamagishi's algorithm as part of \cite{Sch23} and this is to our knowledge the only available implementation of the algorithm in \cite{Yam18}.
We now compare the performance of the different algorithms.
For this, we ran the algorithms with certain symplectic reflection groups \(G\leq\GL(V)\) \cite{Coh80}.
The corresponding linear quotients \(V/G\) are of particular interest as they are examples of \emph{symplectic singularities} \cite{Bea00}, see \cite[Chapter 2]{Sch23} for an overview and more references.
A large class of symplectic reflection groups is related to complex reflection groups: given a complex reflection group \(H\leq \GL(W)\), where \(W\) is a finite\-/dimensional complex vector space, the group \[H^\circledast \coloneqq \left\{\!\begin{psmallmatrix} h & \\ & (h^{-1})^\top\!\end{psmallmatrix}\! \ \middle|\ h\in H\right\}\leq \GL(W \oplus W^\ast)\] is a symplectic reflection group.
For a symplectic reflection group, the number of valuations arising from McKay correspondence coincides with the number of conjugacy classes of symplectic reflections.

The runtimes of the different algorithms are presented in Table~\ref{tab:timings}, where for every timing we took the minimum of three runs.
In the table, we label the groups according to the classification of complex reflection groups by Shephard and Todd \cite{ST54}.
We used the matrix models from CHEVIE \cite{Mic15}, as constructed in \cite{MM10}, for these groups.
Recall that Algorithm~\ref{alg:homkhov} is only applicable if there is just one valuation.

We see that in the cases, where there is more than one valuation, Algorithm~\ref{alg:muvak} is faster than Yamagishi's algorithm, sometimes up to a factor of 10.
When there is just one valuation, the computations carried out by these two algorithms largely agree and hence also their runtimes are comparable.
In these cases, Algorithm~\ref{alg:homkhov} is applicable and faster than both of the other algorithms.

\appendix
\section{A generalization of Bayer's method}
\label{app:bayer}

In Algorithm~\ref{alg:muvak}, we need to compute the homogenization of an ideal with respect to non\-/negative weights.
This involves the computation of a saturation, for which a naive approach requires potentially several expensive Gröbner basis computations.
In the following, we describe how one may adapt an algorithm known as `Bayer's method' to compute such a saturation, and hence the homogenization, efficiently.
The results of this appendix were already published as part of \cite{Sch23}; we repeat them here for the reader's convenience.

We require some notions from the theory of Gröbner bases (or standard bases) and refer to \cite[Chapter~1]{GP08} for the basic definitions.
In particular, if \(>\) is a monomial ordering on a polynomial ring \(K[X_1,\dots, X_k]\), then following \cite[Definition~1.2.1]{GP08} we allow that \(X_i < 1\), which is occasionally excluded in the definition of monomial orderings.
We also adopt the terminology of only speaking of `Gröbner bases' if the ordering is global and of `standard bases' in general, see \cite[Definition~1.6.1]{GP08}.

For the following discussion, let \(K[X_1,\dots, X_k]\) be graded by an integral weight vector \(\mathbf w = (w_1,\dots, w_k)\in\ZZ^k_{\geq 0}\) via \(\deg_{\mathbf w}(X_i) = w_i\) and let \(I\ideal K[X_1,\dots, X_k]\) be an ideal.
We add an additional variable \(t\) to \(K[X_1,\dots, X_k]\) and want to compute the homogenization \(I^\hom\) of \(I\) with respect to the grading \(\deg_{\mathbf w}\) and the variable \(t\).
Depending on whether we want to homogenize `positively' or `negatively', we set \(\deg_{\mathbf w}(t) \coloneqq 1\) or \(\deg_{\mathbf w}(t) \coloneqq -1\), respectively.

If \(w_i\neq 0\) for all \(1\leq i\leq k\), there is a quite simple method to compute \(I^\hom\) that only involves the computation of a Gröbner basis of \(I\) with respect to the weighted degree ordering defined by \(\mathbf w\), see \cite[Exercise~1.7.5]{GP08}.
However, although the weights in our application are non\-/negative, they might in general be zero, so they do not give rise to a total ordering on the set of monomials and we cannot make use of this approach.

A more general idea for the computation of \(I^\hom\) is to homogenize a set of generators of \(I\) resulting in an ideal \(\tilde I\) and then to compute the saturation of \(\tilde I\) with respect to \(t\) as one has \(I^\hom = \tilde I : \langle t\rangle^\infty\) by \cite[Corollary~4.3.8]{KR05}.
However, a naive computation of this saturation potentially involves several expensive Gröbner basis computations as one iteratively computes ideal quotients until the result stabilizes, see \cite[Section~1.8.9]{GP08}.

Our more specialized approach for the saturation is based on `Bayer's method', see \cite[p.\ 120]{Bay82}, \cite[Proposition~5.1.11]{Sti05}.
In a nutshell, this means that one computes a standard basis of \(\tilde I\) with respect to a tailored monomial ordering and then only needs to divide the elements of this basis by \(t\), see Proposition~\ref{prop:bayer} for the precise statement.
The core idea of Bayer's method is the following observation.
Let \(f\in K[X_1,\dots, X_k]\) be a homogeneous polynomial with respect to the standard grading.
Then the leading term \(\LT(f)\) with respect to the degree reverse lexicographical ordering \cite[Example~1.2.8~(1)~(ii)]{GP08} is divisible by \(X_k\) if and only if \(f\) is divisible by \(X_k\).
We now translate this to the grading \(\deg_{\mathbf w}\) by considering a certain matrix ordering.

We assume that \(\mathbf w \neq 0\), so after reordering the variables we may assume \(w_k \neq 0\).
If we have \(\mathbf w = 0\), then any polynomial and hence any ideal is homogeneous, so the computation of the homogenization is trivial.
Let \[M \coloneqq \begin{pmatrix}
  w_1 & \cdots & \cdots & w_k    & \deg_{\mathbf w}(t) \\
  0   & \cdots & \cdots & 0      & -1     \\
  1   & \ddots &        & \vdots & 0      \\
      & \ddots & \ddots & \vdots & \vdots \\
  0   &        & 1      & 0      & 0
\end{pmatrix}\in\ZZ^{(k + 1)\times(k + 1)}.\]
This is a matrix of full rank as \(w_k \neq 0\) and hence induces a monomial ordering \(>_M\) on the monomials of \(K[X_1,\dots, X_k, t]\) by multiplying the exponent vectors by \(M\) (from the left) and then using the lexicographic ordering on \(\ZZ^{k + 1}\), see \cite[Remark 1.2.7]{GP08}.

One directly convinces oneself of the following facts.

\begin{lemma}
  \label{lem:bayer}
  Let \(f_1 = X_1^{a_1}\cdots X_k^{a_k}t^{a_{k + 1}}\) and \(f_2 = X_1^{b_1}\cdots X_k^{b_k}t^{b_{k + 1}}\) be two monomials.
  Then we have:
  \begin{enumerate}
    \item\label{lem:bayer:a} \(X_i >_M 1\) for \(1\leq i\leq k\);
    \item\label{lem:bayer:b} if \(\deg_{\mathbf w}(t) = 1\), then \(t >_M 1\), and \(1 >_M t\) otherwise;
    \item\label{lem:bayer:c} if \(\deg_{\mathbf w}(f_1) > \deg_{\mathbf w}(f_2)\), then \(f_1 >_M f_2\);
    \item\label{lem:bayer:d} if \(\deg_{\mathbf w}(f_1) = \deg_{\mathbf w}(f_2)\) and \(f_1 >_M f_2\), then \(a_{k + 1}\leq b_{k + 1}\).
  \end{enumerate}
\end{lemma}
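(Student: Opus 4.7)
The plan is to compute explicitly what the matrix $M$ does to the exponent vector of a monomial, and then read off each of the four statements from a lexicographic comparison. For a monomial $X_1^{a_1}\cdots X_k^{a_k}t^{a_{k+1}}$ with exponent vector $\mathbf a=(a_1,\dots,a_k,a_{k+1})^\top$, the product $M\mathbf a$ has first coordinate $w_1a_1+\cdots+w_ka_k+\deg_{\mathbf w}(t)\,a_{k+1}=\deg_{\mathbf w}(X_1^{a_1}\cdots t^{a_{k+1}})$, second coordinate $-a_{k+1}$, and subsequent coordinates $a_1,a_2,\dots,a_{k-1}$. By definition of $>_M$, one compares two monomials by applying $M$ to their exponent vectors and comparing the resulting tuples lexicographically.

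With this description in hand, statements \ref{lem:bayer:b}, \ref{lem:bayer:c} and \ref{lem:bayer:d} are immediate. For \ref{lem:bayer:b}, the exponent vector of $t$ is $e_{k+1}$, so $Me_{k+1}$ has first coordinate $\deg_{\mathbf w}(t)\in\{1,-1\}$, settling the comparison with $1$. For \ref{lem:bayer:c}, the monomials already differ in the first coordinate, so $f_1>_M f_2$. For \ref{lem:bayer:d}, equality in the first coordinate forces the comparison to move to the second, where $-a_{k+1}\geq -b_{k+1}$, that is $a_{k+1}\leq b_{k+1}$, is the only way to have $M\mathbf a\geq_{\mathrm{lex}}M\mathbf b$.

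Statement \ref{lem:bayer:a} requires a small case distinction because the entries of $\mathbf w$ are only assumed non\-/negative. If $w_i>0$, then $Me_i$ has positive first coordinate and so $X_i>_M 1$. If $w_i=0$, this can happen only for $i\leq k-1$ by the standing assumption $w_k\neq 0$. In that case the first two coordinates of $Me_i$ vanish, but the $(i+2)$-th coordinate of $Me_i$ is $1$, since the corresponding lower row of $M$ has a $1$ in column $i$ and zeros elsewhere. Hence $Me_i>_{\mathrm{lex}}0$ and again $X_i>_M 1$.

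The only subtle point, and the one I expect to be worth spelling out carefully, is precisely this $w_i=0$ case in \ref{lem:bayer:a}: it is exactly the reason for the identity block occupying the lower rows of $M$, and also the reason for the initial reordering so that $w_k\neq 0$ (without it the lower block would not catch the variable $X_k$). Everything else is a direct unpacking of the lexicographic order on $M\mathbf a$.
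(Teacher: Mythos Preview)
Your proof is correct and matches the paper's approach: the paper in fact gives no proof at all, writing only ``One directly convinces oneself of the following facts'', and your explicit computation of $M\mathbf a$ followed by the case distinction in \ref{lem:bayer:a} is exactly the routine verification the paper leaves to the reader.
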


Point \ref{lem:bayer:d} is the direct generalization of the above mentioned `core idea' for Bayer's method: for a \(\deg_{\mathbf w}\)\=/homogeneous polynomial \(f\), we have \(t\mid \LT_{>_M}(f)\) if and only if \(t\mid f\), where \(\LT_{>_M}\) is the leading term with respect to \(>_M\).
It follows from point \ref{lem:bayer:a} that \(>_M\) is global with respect to the variables \(X_1,\dots, X_k\) and this also extends to the variable \(t\), if \(\deg_{\mathbf w}(t) = 1\), that is, if we homogenize positively, by point \ref{lem:bayer:b}.
However, if \(\deg_{\mathbf w}(t) = -1\), then \(t <_M 1\), so the ordering \(>_M\) is local with respect to \(t\).
This second case is more challenging:
in order to speak about a standard basis of \(\tilde I\) with respect to \(>_M\), we have to consider the extension of \(\tilde I\) to the localization \[K[X_1,\dots, X_k, t]_{>_M} \coloneqq S^{-1}K[X_1,\dots, X_k, t],\] where \[S \coloneqq \{u\in K[X_1,\dots, X_k, t]\setminus \{0\}\mid \LT_{>_M}(u)\text{ is constant}\}.\]
See \cite[Section~1.5]{GP08} for details.
If \(\deg_{\mathbf w}(t) = -1\), we have \(S = \{h\in K[t]\mid h(0)\neq 0\}\), so we may identify \[K[X_1,\dots, X_k, t]_{>_M} = \big(K[t]_{(t)}\big)[X_1,\dots, X_k].\]
In case \(\deg_{\mathbf w}(t) = 1\), the ordering \(>_M\) is global, so we have \(S = K^\times\) by \cite[p.\ 39]{GP08}.

\begin{lemma}
  \label{lem:extconteq}
  With the above notation, let \(J\ideal K[X_1,\dots, X_k, t]\) be a \(\deg_{\mathbf w}\)\=/homogeneous ideal.
  Then we have \[J = (S^{-1}J)\cap K[X_1,\dots, X_k, t].\]
\end{lemma}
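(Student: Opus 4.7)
The inclusion $J \subseteq (S^{-1}J) \cap K[X_1,\dots,X_k,t]$ is immediate from the definition of the extension of an ideal to a localization, so the work is in the reverse inclusion. Let $f \in (S^{-1}J)\cap K[X_1,\dots,X_k,t]$, so $sf \in J$ for some $s \in S$; we need $f \in J$.

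I would dispose of the case $\deg_{\mathbf w}(t) = 1$ first: as noted in the text, $>_M$ is global in this case, hence $S = K^\times$, so $s$ is a unit in $K[X_1,\dots,X_k,t]$ and $f \in J$ immediately. In the remaining case $\deg_{\mathbf w}(t) = -1$, we have $S = \{h \in K[t] \mid h(0) \neq 0\}$, so we may write $s = s_0 + s_1 t + \dots + s_n t^n$ with $s_0 \in K^\times$.

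The plan is then to exploit the $\deg_{\mathbf w}$-homogeneity of $J$ via induction on the number of nonzero $\deg_{\mathbf w}$-homogeneous components of $f$. Decompose $f = \sum_d f_d$ and let $D$ be the maximum degree with $f_D \neq 0$ (which exists since $f$ is a polynomial). Since $\deg_{\mathbf w}(t^i) = -i$, the component of $s_i t^i f_{D+i}$ has degree $D$; because $f_{D+i} = 0$ for all $i \geq 1$, the $\deg_{\mathbf w}$-homogeneous component of $sf$ in degree $D$ collapses to $s_0 f_D$. Homogeneity of $J$ and $sf \in J$ then force $s_0 f_D \in J$, hence $f_D \in J$. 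Since $f_D \in J$ gives $s(f - f_D) = sf - s f_D \in J$ and $f - f_D$ has strictly fewer nonzero homogeneous components, the induction hypothesis yields $f - f_D \in J$, and therefore $f \in J$.

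No step looks like a serious obstacle: the only subtlety is the observation that with $\deg_{\mathbf w}(t) = -1$ the maximum-degree component of $sf$ is isolated (higher powers of $t$ only contribute to \emph{strictly lower} $\deg_{\mathbf w}$-degrees), which is what makes the top component of $sf$ equal to $s_0 f_D$ and lets the induction proceed.
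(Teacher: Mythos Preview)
Your proof is correct and follows essentially the same route as the paper: dispose of the global-ordering case where \(S = K^\times\), then in the case \(\deg_{\mathbf w}(t) = -1\) use homogeneity of \(J\) together with \(s_0 \neq 0\) to isolate the top-degree component and conclude. The paper compresses your induction into a single line (writing \(u = \sum_j a_j t^j\), it asserts \(a_j t^j f \in J\) for each \(j\) by homogeneity, hence \(a_0 f \in J\)), which literally presumes \(f\) homogeneous; your explicit induction on the number of nonzero components handles the general \(f\) cleanly.
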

\begin{proof}
  If \(\deg_{\mathbf w}(t) = 1\), there is nothing to show, so let \(\deg_{\mathbf w}(t) = -1\).
  For a polynomial \(f\in(S^{-1}J)\cap K[X_1,\dots, X_k, t]\), there is \(u\in S\) with \(uf\in J\).
  Writing \(u = \sum_ja_jt^j\) with \(a_j\in K\), we have \(a_jt^jf\in J\) for all \(j\) since \(J\) is homogeneous.
  But \(a_0 \neq 0\) by assumption, so \(f\in J\).
\end{proof}

\begin{proposition}[Bayer's method]
  \label{prop:bayer}
  Let \(J\ideal K[X_1,\dots, X_k, t]\) be a \(\deg_{\mathbf w}\)\=/homogeneous ideal and let \(g_1,\dots, g_s\in K[X_1,\dots, X_k, t]\) be a standard basis of \(S^{-1}J\) with respect to the monomial ordering \(>_M\).
  Write \(g_i = t^{m_i}g'_i\) for \(g'_i\in K[X_1,\dots, X_k, t]\) with \(t\nmid g'_i\).
  Then \(g'_1,\dots, g'_s\) generate \(J:\langle t\rangle^\infty\).
\end{proposition}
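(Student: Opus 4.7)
My plan has two inclusions. The first, \(\langle g_1', \dots, g_s' \rangle \subseteq J : \langle t \rangle^\infty\), is immediate: since \(J\) is \(\deg_{\mathbf w}\)\-/homogeneous and each \(g_i\) lies in \((S^{-1}J) \cap K[X_1, \dots, X_k, t]\), Lemma~\ref{lem:extconteq} yields \(g_i \in J\), and therefore \(t^{m_i} g_i' = g_i \in J\) shows \(g_i' \in J : \langle t \rangle^\infty\).

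For the reverse inclusion I would first replace each \(g_i\) by its \(\deg_{\mathbf w}\)\-/homogeneous component containing \(\LM_{>_M}(g_i)\); this yields again a standard basis of \(S^{-1}J\) because \(>_M\) refines \(\deg_{\mathbf w}\) (Lemma~\ref{lem:bayer}.(c)) and \(S^{-1}J\) is \(\deg_{\mathbf w}\)\-/homogeneous. By Lemma~\ref{lem:bayer}.(d), \(\LM(g_i)\) then has the minimal \(t\)\-/exponent \(m_i\) among the terms of \(g_i\), so \(\LM(g_i) = t^{m_i}\LM(g_i')\) with \(t \nmid \LM(g_i')\). Now take \(f \in J : \langle t \rangle^\infty\), which I may assume is \(\deg_{\mathbf w}\)\-/homogeneous with \(t \nmid f\), and pick \(N \geq 0\) with \(t^N f \in J\). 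The standard basis property gives \(\LM(t^N f) = t^N\LM(f) \in \langle\LM(g_1), \dots, \LM(g_s)\rangle\); combining this with \(t \nmid \LM(g_{i_0}')\) and \(t \nmid \LM(f)\) forces \(\LM(g_{i_0}') \mid \LM(f)\) for some \(i_0\). Subtracting the corresponding monomial multiple of \(g_{i_0}'\) from \(f\) produces \(f_1 \in J : \langle t \rangle^\infty\) of the same \(\deg_{\mathbf w}\)\-/degree with \(\LM(f_1) <_M \LM(f)\) (or \(f_1 = 0\)).

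The main obstacle will be showing that iterating this reduction terminates to express \(f\) as a \(K[X_1, \dots, X_k, t]\)-combination of the \(g_i'\). In the positive case \(\deg_{\mathbf w}(t) = 1\), the ordering \(>_M\) is global by Lemma~\ref{lem:bayer}.(a), (b) and hence a well\-/ordering, so termination is automatic---this is the classical Bayer's method of \cite{Bay82,Sti05}. In the negative case \(>_M\) is only local with respect to \(t\) and a naive induction on leading monomials no longer terminates; I would instead invoke Mora's standard representation theorem in \(S^{-1}K[X_1, \dots, X_k, t]\) to produce, in one step, an equality \(u \cdot t^N f = \sum_i h_i g_i = \sum_i h_i t^{m_i} g_i'\) with \(u \in S\) and \(h_i \in K[X_1, \dots, X_k, t]\). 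Extracting the \(\deg_{\mathbf w}\)\-/homogeneous component of degree \(\deg_{\mathbf w}(f) - N\) on both sides and using that \(u(0) \neq 0\) (so the coefficient of the relevant component of \(u\) is a nonzero scalar) gives \(t^N f \in \langle g_1', \dots, g_s' \rangle\); combined with a saturation argument exploiting the coprimality \(t \nmid \LM(g_i')\) to check that \(\langle g_1', \dots, g_s' \rangle\) is itself \(t\)\-/saturated, this yields \(f \in \langle g_1', \dots, g_s' \rangle\) and closes the proof.
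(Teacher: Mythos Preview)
Your first inclusion and the leading\-/monomial divisibility step are exactly what the paper does. The divergence is in what you do with that divisibility. Once you have shown that for every homogeneous \(f\in J:\langle t\rangle^\infty\) (with \(t\nmid f\)) some \(\LM_{>_M}(g_{i_0}')\) divides \(\LM_{>_M}(f)\), you have already proved that \(g_1',\dots,g_s'\) is a \emph{standard basis} of \(S^{-1}(J:\langle t\rangle^\infty)\). The paper simply records this, invokes the general fact that a standard basis generates (\cite[Lemma~1.6.7~(3)]{GP08}), and then applies Lemma~\ref{lem:extconteq} once more to descend from \(S^{-1}(J:\langle t\rangle^\infty)\) to \(J:\langle t\rangle^\infty\). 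No reduction procedure, no Mora normal form, no separate saturation argument is needed.

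Your detour through explicit reduction and Mora's theorem can be made to give \(t^Nf\in\langle g_1',\dots,g_s'\rangle\), but the final step---``a saturation argument exploiting the coprimality \(t\nmid\LM(g_i')\) to check that \(\langle g_1',\dots,g_s'\rangle\) is itself \(t\)\=/saturated''---is a genuine gap. Having \(t\)\=/free leading monomials does \emph{not} force a homogeneous ideal to be \(t\)\=/saturated unless the generators already form a standard basis. For a concrete counterexample take \(k=2\), \(w_1=w_2=1\), \(\deg_{\mathbf w}(t)=-1\), and
\[
g_1'=X_1X_2,\qquad g_2'=X_1^2+X_2^3t.
\]
Both are \(\deg_{\mathbf w}\)\=/homogeneous of degree \(2\), and one checks with the matrix \(M\) that \(\LM_{>_M}(g_1')=X_1X_2\) and \(\LM_{>_M}(g_2')=X_1^2\), so \(t\nmid\LM_{>_M}(g_i')\). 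Yet \(X_1g_1'-X_2g_2'=-X_2^4t\) lies in \(\langle g_1',g_2'\rangle\) while \(X_2^4\notin\langle g_1',g_2'\rangle\) (set \(X_1=0\)). Thus the ideal is not \(t\)\=/saturated. What rescues the situation in the actual proposition is precisely that the \(g_i'\) arise from a standard basis of \(S^{-1}J\), and hence---by the divisibility you already proved---form a standard basis of \(S^{-1}(J:\langle t\rangle^\infty)\). Recognising this is both the missing justification for your saturation claim and the paper's entire second half; once you see it, the Mora step becomes superfluous.
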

\begin{proof}
  By Lemma~\ref{lem:extconteq}, we have \(g_i\in J\) and hence \(g'_i\in J:\langle t\rangle^\infty\).

  Let \(g'\in J:\langle t\rangle^\infty\).
  Then there is \(m\geq 0\) with \(g \coloneqq t^mg'\in J\), hence \(g\in S^{-1}J\).
  So there is \(i\in\{1,\dots, s\}\) such that \(\LT_{>_M}(g_i) \mid \LT_{>_M}(g)\) and therefore \[\LT_{>_M}(g'_i) \mid t^m\LT_{>_M}(g').\]
  We may assume that the \(g_i\) are \(\deg_{\mathbf w}\)\=/homogeneous, as replacing \(g_i\) by their homogeneous parts does not change the standard basis property.
  Therefore, \(t\nmid \LT_{>_M}(g'_i)\) by choice of \(g'_i\) and the properties of \(>_M\).
  So, \(\LT_{>_M}(g'_i)\mid \LT_{>_M}(g')\) and this proves that \(g'_1,\dots,g'_s\) is a standard basis of \(S^{-1}\big(J:\langle t\rangle^\infty\big)\), so in particular a generating system \cite[Lemma~1.6.7~(3)]{GP08}.
  But then \(g'_1,\dots, g'_s\) generate \(J:\langle t\rangle^\infty\) by Lemma~\ref{lem:extconteq} again.
\end{proof}

In conclusion, to compute \(I^\hom = \tilde I : \langle t\rangle^\infty\) we need to compute a standard basis for \(\tilde I\) with respect to \(>_M\) and divide the elements by \(t\).
This only involves the computation of one standard basis and proved to be quite efficient in practice compared with the computation of the saturation via iterated quotients.
To compute the homogenization with respect to several gradings \(\deg_1,\dots,\deg_m\) as in Algorithm~\ref{alg:muvak}, we may use this approach iteratively.

\printbibliography

\end{document}